\renewcommand\paragraph{\@startsection{paragraph}{4}%
  \z@ \z@ {-\fontdimen2\font}%
  {\normalfont\bfseries}} 
\newlist{myQuoteEnumerate}{enumerate}{2}
\setlist[myQuoteEnumerate,1]{label=(\alph*)}
\setlist[myQuoteEnumerate,2]{label=(\alph*)}
\newtheorem{thm}{Theorem}[section]
\newtheorem{thmm}{Theorem}[section]
\newtheorem{cor}{Corollary}[section]
\newtheorem{lem}{Lemma}[section]
\newtheorem{defi}{Definition}[section]
\newtheorem{remark}{Remark}[section]
\definecolor{rulecolor}{RGB}{0,71,171}
\definecolor{tableheadcolor}{gray}{0.92}
\numberwithin{equation}{section}
\newcommand\quotient[2]{
        \mathchoice
            {
                \text{\raise1ex\hbox{$#1$}\Big/\lower1ex\hbox{$#2$}}%
            }
            {
                #1\,/\,#2
            }
            {
                #1\,/\,#2
            }
            {
                #1\,/\,#2
            }
    }
\definecolor{aurometalsaurus}{rgb}{0.43, 0.5, 0.5}
\definecolor{darkjunglegreen}{rgb}{0.1, 0.14, 0.13}
\definecolor{coolblack}{rgb}{0.0, 0.18, 0.39}
\definecolor{cobalt}{rgb}{0.0, 0.28, 0.67}
\title[Similarity of Multibrot sets]{\bf Similarity between the Multibrot set and the Julia set of correspondences at Misiurewicz points}
\author{Carlos Siqueira }
\date{\today}
\address{Department of Mathematics, Institute of Mathematics and Statistics, Federal University of Bahia, Salvador -- BA, Brazil.}
\curraddr{Department of Mathematics, Institute of Mathematics and Computer Science, University of S\~ao Paulo, S\~ao Carlos -- SP, Brazil.}
\email{carlos.siqueira@ufba.br}
\begin{document}

\hypersetup{linkcolor=cobalt}

\begin{abstract} 
We study the fine structure of the parameter space of the unicritical family of algebraic correspondences \( z^r + c \), where \( r > 1 \) is a rational exponent. Building on Tan Lei's  result regarding the similarity between the Mandelbrot set and Julia sets in the quadratic family, we prove that the Julia set of the correspondence is asymptotically self-similar about every Misiurewicz point.
 Assuming that the transversality condition holds at a Misiurewicz parameter \( a \in \mathbb{C} \), we prove that the associated Multibrot set (which coincides with the Mandelbrot set when \( r = 2 \)) is asymptotically similar to the Julia set about \( a \). We provide an algebraic proof of the transversality condition when the correspondence is represented by the semigroup  $\langle z^2 +c, -z^2+c \rangle. $ For general exponents, experimental evidence supports the transversality condition,  with infinitely many small copies of the Multibrot set accumulating at every Misiurewicz parameter.

 \end{abstract}

\maketitle

\keywords{MSC-class 2020:  37F05, 37F10 (Primary)   37F32 (Secondary).}

\section{Introduction}

The study of complex dynamics experienced a remarkable expansion during the 1980s and 1990s, marked by deep connections between complex analysis, geometry, and dynamical systems. Among the advances of this period was the discovery of self-similarity and scaling phenomena in the parameter space of the quadratic family $f_c(z) = z^2 + c$. Eckmann and Epstein~\cite{eckmann1985scaling} rigorously established the existence of miniature copies of the Mandelbrot set $M$ accumulating near Misiurewicz points, revealing an intricate scaling structure on the boundary of $M$. Building on their foundational theory of polynomial-like maps, 
Douady and Hubbard~\cite{douady1985dynamics} developed a general framework 
that explains why miniature copies of the Mandelbrot set should appear 
in many families beyond the quadratic one.
  Subsequently, Tan Lei~\cite{TanLei} proved a striking result: near any Misiurewicz parameter $c_0$, the Mandelbrot set is asymptotically similar to the Julia set $J_{c_0}$, thereby uncovering a profound correspondence between structures in the parameter space and those in the dynamical plane.

In this paper, we study the family of holomorphic correspondences defined by  
\[
z \mapsto \sqrt[q]{z^p} + c,
\]  
where \( r = \frac{p}{q} > 1 \) is a rational exponent (see  section \ref{lkhjakjhsoibasdfcsd}). This family constitutes a natural generalization of the  classical quadratic family.  

We introduce the notion of the \emph{filled Julia set} \( K_c \), consisting of all points in the complex plane that have at least one bounded forward orbit under the correspondence. We also define a generalized Mandelbrot set \( M_{p,q} \), consisting of all parameters \( c \in \mathbb{C} \) for which \( 0 \in K_c \).

Our main result shows that both \( M_{p,q} \) and \( K_c \) are \emph{asymptotic  similar} about a Misiurewicz point $c$ (see Definition \ref{ljhlasihkjhasdfacvqeaf}).

\subsection{Experimental overview.} 
Figure~\ref{fig:ssdfsdfasdedggweg} shows the Multibrot set of the family \[\mathbf{f}_c(z) = z^{5/2}+c.\] 
Its boundary contains infinitely many Misiurewicz points; for instance,
\[
   a = -1.027124 + 1.141048i
\]
is such a point, and the corresponding filled Julia set $K_a$ is also displayed in 
Figure~\ref{fig:ssdfsdfasdedggweg}, together with a magnification of $M_{5,2}$ by $10^5$. 
The figure reveals infinitely many miniature copies of the Multibrot set near $a$, and,
crucially, illustrates the asymptotic similarity between $K_a$ and $M_{5,2}$ in a 
neighborhood of $a$.

\begin{figure}[h]
  \centering
  \begin{subfigure}[b]{0.45\linewidth}
    \centering
    \includegraphics[width=\linewidth]{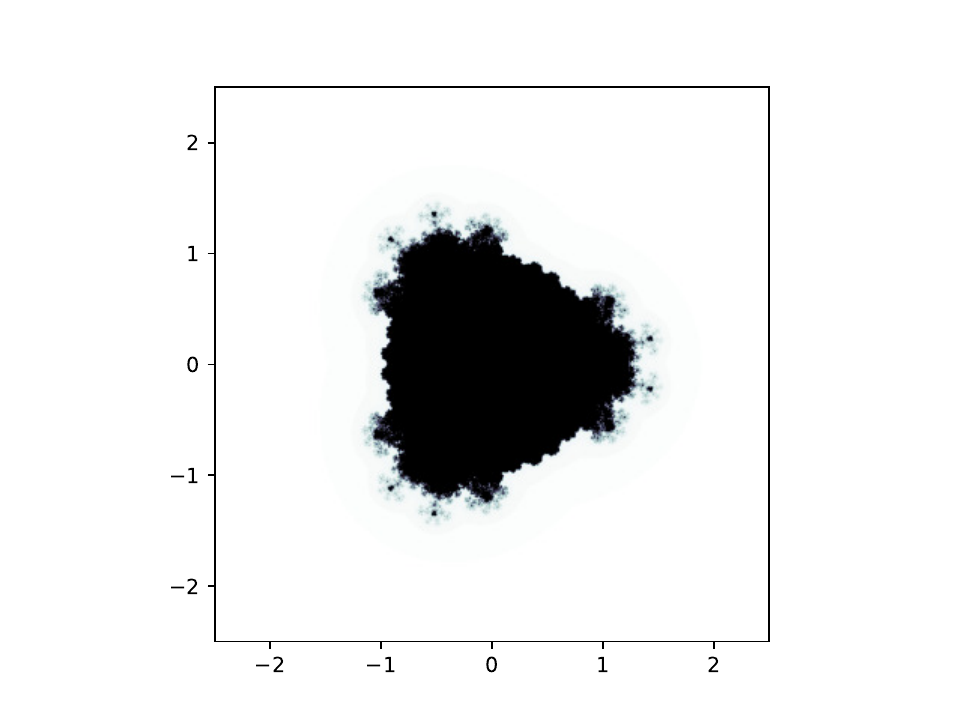}
    \caption{{\tiny $M_{5,2}$}} 
    \label{lkhasdbsfbsdfherhsg}
  \end{subfigure}%
  \hfill
   \begin{subfigure}[b]{0.55\linewidth}
    \centering
    \includegraphics[width=\linewidth]{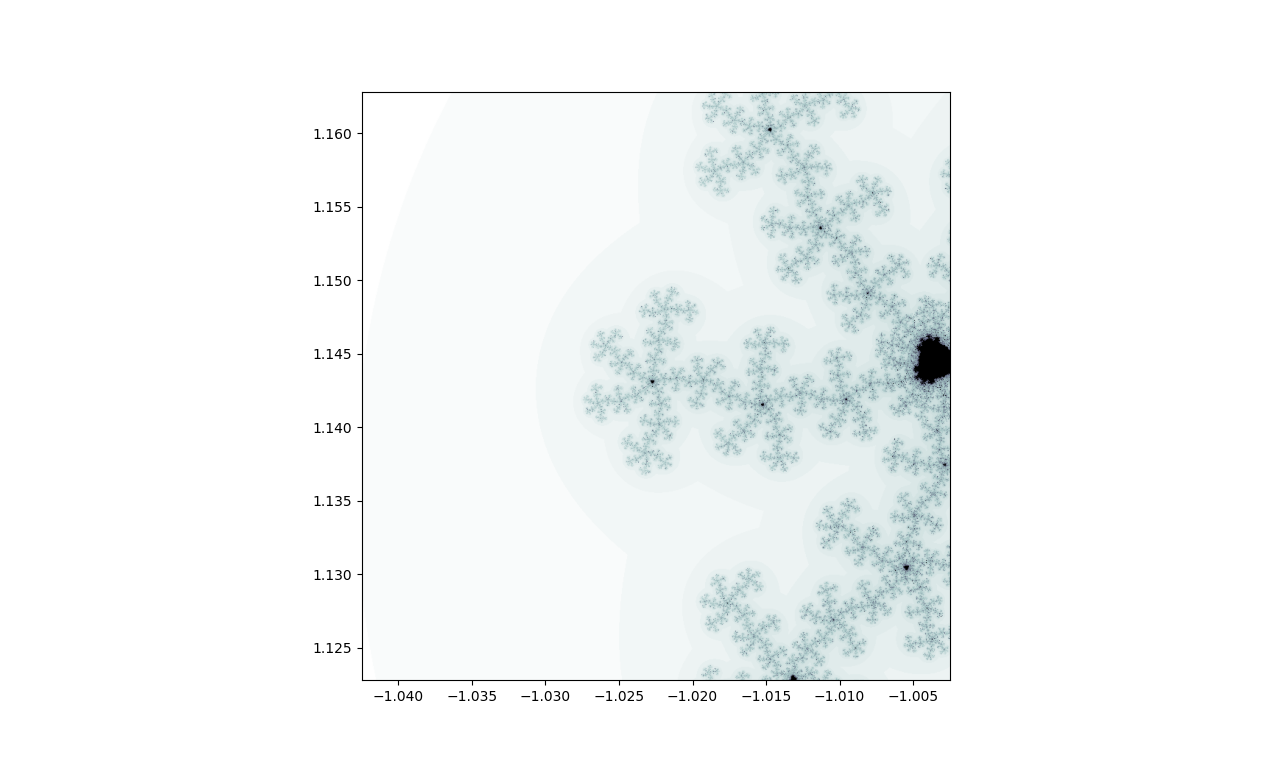}
    \caption{{\tiny $M_{5,2}$, magnified $10^3$ times at $a$
}}
    \label{lkhbsljhsoosdsdfgbnbwrfdgf}
  \end{subfigure}%
  \hfill
   \begin{subfigure}[b]{0.45\linewidth}
    \centering
    \includegraphics[width=\linewidth]{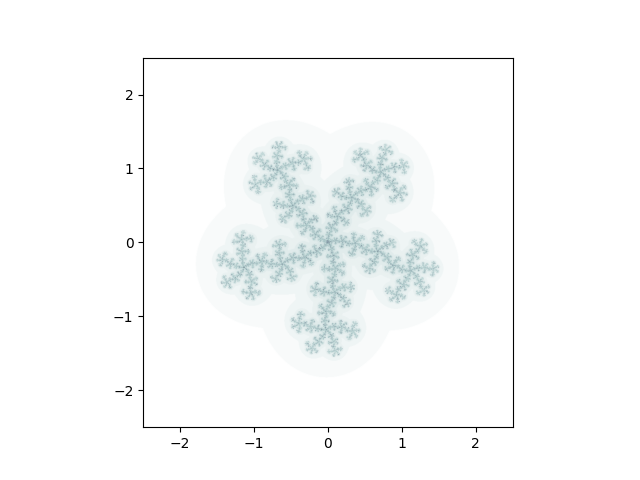}
    \caption{\tiny The filled Julia set $K_a$}
    \label{lkjkjapoasdfasdggfs}
  \end{subfigure}%
    \hfill
   \begin{subfigure}[b]{0.55\linewidth}
    \centering
    \includegraphics[width=\linewidth]{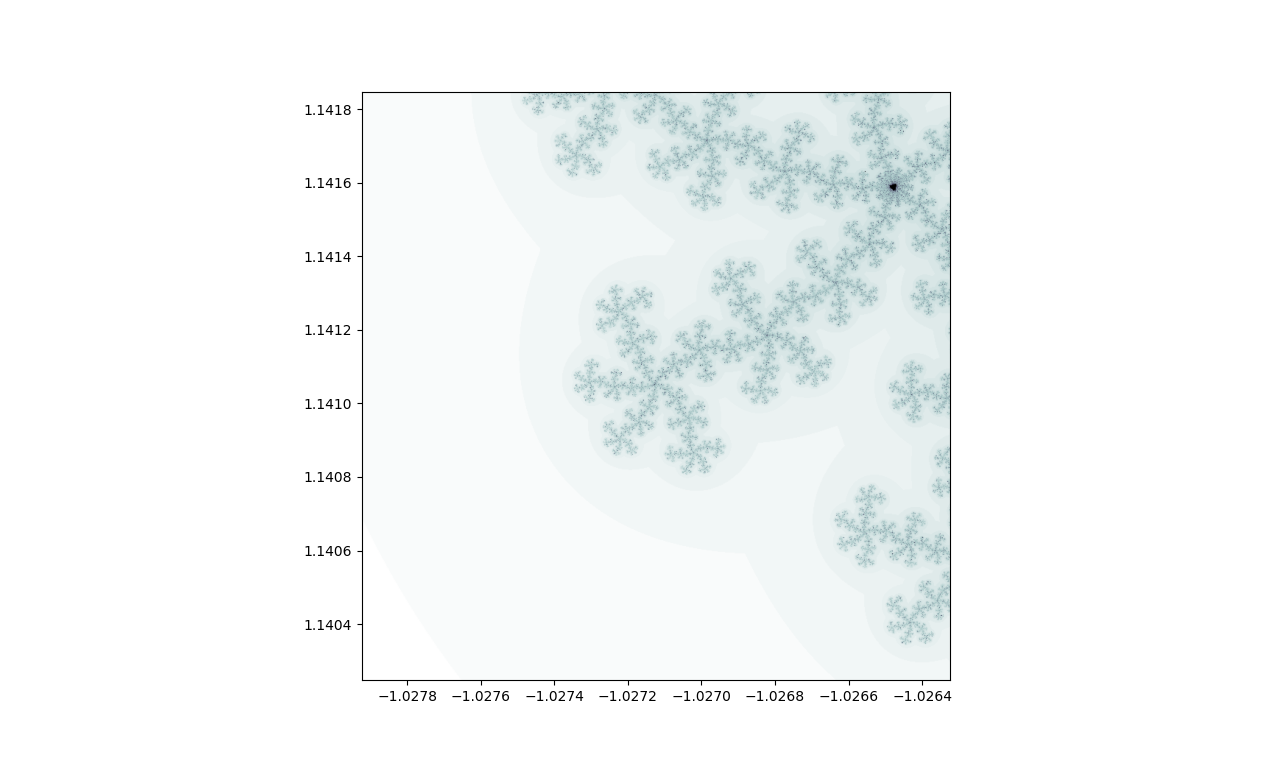}
    \caption{\tiny $M_{5,2}$, magnified $10^5$ times at $a$}
    \label{clhkkkclakssdggwbdfbwdg}
  \end{subfigure}%

  \caption{The Multibrot set $M_{5,2}$ and its magnifications at the Misiurewicz point \(a = -1.027124 + 1.141048i.\)  }
  \label{fig:ssdfsdfasdedggweg}
\end{figure}

This phenomenon extends to general exponents $p/q > 1$. 
Heuristically, detecting a sufficiently small copy (of size about $10^{-5}$ or less) 
already signals the presence of a Misiurewicz point. For example, $-2$ is a 
Misiurewicz point for the family $\sqrt{z^4}+c$, and
\[
   c_1 = -1.535 + 0.674i
\]
lies very close to another. In~\cite[Example~2.1]{siqueira2025suhyperbolic} a rigorous proof 
is given that infinitely many Misiurewicz points accumulate at $-2$ for this family.

The results presented in this paper are closely aligned with the conceptual framework introduced by Sullivan, commonly known as the \emph{Sullivan dictionary}. This framework establishes profound analogies between the iteration theory of rational maps and the theory of Kleinian groups. In particular, it draws parallels between objects such as Julia sets and limit sets, as well as between the Mandelbrot set and deformation spaces in Teichm\"uller theory. The dictionary further extends to holomorphic correspondences and has proved highly effective in interpreting many developments in holomorphic dynamics.

In the following sections, we present a  formal development of the results introduced so far through experimental illustrations.

\subsection{Algebraic correspondences.} 

Multi-valued maps  $z \mapsto w$ in the complex plane  defined implicitly by a polynomial equation $P(z, w) = 0$ in two complex variables are known as \emph{algebraic correspondences.} These systems generalize both rational maps and Kleinian groups, while introducing new layers of complexity. 
 Fatou and Julia, in the early decades of the twentieth century, recognized their significance in the development of a unified theory of one-dimensional complex dynamics.
However, the systematic study of algebraic correspondences began in the 1980s, with foundational work by Bullett and Penrose in the following decades~\cite{BullettNonlinearity, Bullett92, BPExp, Bullett1994}. In~\cite{Bullett1994}, Bullett and Penrose proposed the celebrated conjecture that the connectedness locus of the family of algebraic correspondences defined by
\[
\left( \frac{a w - 1}{w - 1} \right)^{2}
+ \left( \frac{a w - 1}{w - 1} \right)
  \left( \frac{a z + 1}{z + 1} \right)
+ \left( \frac{a z + 1}{z + 1} \right)^{2}
= 3
\]
is homeomorphic to the Mandelbrot set.

Recently, there has been significant progress in the field, with important contributions coming from several directions. Notably, Bullett and Lomonaco proved the long-standing conjecture originally proposed in~\cite{Bullett1994}, as shown in~\cite{BL19, BULLETT2024109956}. In addition, Lee, Lyubich, Makarov, and Mukherjee~\cite{Mukherjee} made a major contribution by studying a family $\mathcal{S}$ of Schwarz reflections,   showing that the abstract connectedness locus of $\mathcal{S}$ is homeomorphic to the abstract parabolic Tricorn, the anti-holomorphic Mandelbrot set. See also~\cite{bullett2024mating, luo2025teichm, lyubich2024antiholomorphic, lee2023dynamics} for more recent developments in the area.

\subsection{The unicritical family}  \label{lkhjakjhsoibasdfcsd}
The multivalued function \( w = \sqrt[n]{z} \), the \( n \)th root of \( z \), is simply the algebraic correspondence defined by the equation \( w^n - z = 0 \).
Consider the correspondence \( \mathbf{f}_c: \mathbb{C} \to \mathbb{C} \) given by
\begin{equation}\label{lkjdllkjhalsoed}
\mathbf{f}_c(z) = \sqrt[q]{z^p} + c,
\end{equation}
where \( p > q>0. \) In other words, \( \mathbf{f}_c \) is a \( (p\!:\!q) \) correspondence $(w-c)^q=z^p$; except at the critical value \( c \) and the critical point \( 0 \), each point has exactly \( q \) images and \( p \) pre-images.

Forward and backward orbits are defined in the natural way. Due to the multivalued nature of the correspondence, every point admits uncountably many forward orbits.

Previous works on the family \eqref{lkjdllkjhalsoed} relate to holomorphic motions \cite{SS17}, geometric rigidity of the post-critical set \cite{ETDS22}, and estimates on the Hausdorff dimension of the Julia set using an analogue of the Bowen formula for such correspondences \cite{Proc22}.

\subsection{Asymptotic similarity in the dynamical plane.} Any sequence $(z_i)_{i=0}^{\infty}$ satisfying $z_{i+1}\in \mathbf{f}_c(z_i)$ for every $i$ is called an orbit of the correspondence $\mathbf{f}_c$. An orbit is said to be bounded if it is contained in a compact subset of the complex plane.

The \emph{filled Julia set} $K_c$  of the correspondence  \eqref{lkjdllkjhalsoed} consists of every $z$ having at least one bounded forward orbit.

In the following definition, $M_{2, 1}$ corresponds to the Mandelbrot set.

\begin{defi}[\bf Multibrot set] $M_{p, q}$ is the set of parameters $c$ for which $0\in K_c.$

\end{defi}

For the  quadratic family \( f_c(z) = z^2 + c \), a parameter \( c \) is called a \emph{Misiurewicz point} if the critical point \( 0 \) is strictly pre-periodic. In the setting of the algebraic correspondence \( \mathbf{f}_c \), we adopt the following  definition.

\begin{defi}[\bf Misiurewicz point]\label{ljhlasihkjhasdfacvqeaf}\normalfont
A parameter \( a\in \mathbb{C} \) is called a \emph{Misiurewicz point} if the critical point \( 0 \) has a unique bounded forward orbit under the correspondence \( \mathbf{f}_a \), and this orbit is strictly pre-periodic.
\end{defi}

According to the definition of a Misiurewicz point, the point \( z_0 = a \) has a unique bounded forward orbit, which is necessarily pre-periodic to a  cycle $\alpha_a$:
\[
a = z_0 \xrightarrow{\mathbf{f}_a} z_1 \xrightarrow{\mathbf{f}_a} \cdots \xrightarrow{\mathbf{f}_a} z_{\ell} \xrightarrow{\mathbf{f}_a} \cdots \xrightarrow{\mathbf{f}_a} z_{\ell + n} = z_{\ell}.
\]
According to \cite[Theorem D]{siqueira2025suhyperbolic}, the cycle $\alpha_a$, which begins at \( z_{\ell} \) and ends at \( z_{\ell + n} \), is repelling in the sense that the composition of univalent branches of \( \mathbf{f}_a \) along the cycle defines a holomorphic map with a repelling fixed point at \( z_{\ell} \). 
We will  construct a \emph{holomorphic motion} of this repelling cycle. As a result, for each \( c \) sufficiently close to \( a \), there exists a sequence
\begin{equation}\label{asdfasdflhjlkkjhxc}
\xi(c) \xrightarrow{\mathbf{f}_c} z_1(c) \xrightarrow{\mathbf{f}_c} \cdots \xrightarrow{\mathbf{f}_c} z_{\ell}(c) \xrightarrow{\mathbf{f}_c} \cdots \xrightarrow{\mathbf{f}_c} z_{\ell + n}(c) = z_{\ell}(c),
\end{equation}
where each \( z_j(c) \) and \( \xi(c) \) is a holomorphic function of \( c \), with \( \xi(a) = a \) and $z_j(a)=z_j$ for every $j.$

One of the key (and technically subtle) steps in the proof of self-similarity is to show that the sequence \eqref{asdfasdflhjlkkjhxc} is the \emph{only} bounded forward orbit of \( \xi(c) \) under \( \mathbf{f}_c \) for all \( c \) sufficiently close to \( a \). In other words, the defining condition for Misiurewicz points is stable under small perturbations of the parameter: although \( \xi(c) \neq c \), the point \( \xi(c) \) satisfies the same uniqueness condition as \( a = \xi(a) \), namely, that it has a unique bounded forward orbit, which lands on a repelling cycle.
Indeed, for each such parameter \( c \), the cycle in \eqref{asdfasdflhjlkkjhxc}  beginning at \( z_{\ell}(c) \) remains repelling, with multiplier \( \lambda(c) \).

The concept of asymptotic similarity, along with related definitions, is presented in Section~\ref{adsflkajlksjdfaoibadadf}.
The following result is stated in full generality as Theorem~\ref{gdasoiu}.

\begin{thmm}[\bf Asymptotic self-similarity of \( K_c \)]
Let \( a \in \mathbb{C} \) be a Misiurewicz point for the family defined by \eqref{lkjdllkjhalsoed}. For every parameter \( c \) in a neighborhood of \( a \),  \( K_c \) is asymptotically \( \lambda(c) \)-self-similar about each point  in the orbit  \eqref{asdfasdflhjlkkjhxc}. \end{thmm}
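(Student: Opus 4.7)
The plan is to follow the three-step scheme that Tan Lei \cite{TanLei} devised for the quadratic family, adapted to the multivalued setting: first linearize the repelling cycle at $z_\ell(c)$; second, use the Königs coordinate to realize $K_c$ near $z_\ell(c)$ as a self-similar set with dilation factor $\lambda(c)$; third, transport the conclusion to the remaining points $z_j(c)$ of the pre-periodic orbit via univalent branches of $\mathbf{f}_c$.

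For the linearization, \cite[Theorem D]{siqueira2025suhyperbolic} supplies a univalent branch $g_c$ of $\mathbf{f}_c^{\,n}$ fixing $z_\ell(c)$ with multiplier $\lambda(c)$, $|\lambda(c)|>1$. Königs' theorem then yields a biholomorphism $\phi_c$ of a neighborhood $U$ of $z_\ell(c)$ onto a neighborhood of $0$, depending holomorphically on $c$ and normalized by $\phi_c(z_\ell(c))=0$, $\phi_c'(z_\ell(c))=1$, such that $\phi_c\circ g_c=\lambda(c)\,\phi_c$. Since any pre-image under $\mathbf{f}_c$ of a point with a bounded forward orbit itself has a bounded forward orbit, $g_c^{-1}(K_c\cap U)\subset K_c$; in the chart this reads $\lambda(c)^{-1}E_c\subset E_c$ for $E_c:=\phi_c(K_c\cap U)$. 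Consequently the dilates $\lambda(c)^n E_c$ form an increasing sequence of closed sets whose Hausdorff limit $L_c$ on compact subsets of $\mathbb{C}$ satisfies $\lambda(c)\,L_c=L_c$. The expansion $\phi_c(z)=(z-z_\ell(c))+O((z-z_\ell(c))^2)$ gives
\[
\bigl|\lambda(c)^n\phi_c(z)-\lambda(c)^n(z-z_\ell(c))\bigr|=O(|\lambda(c)|^{-n})
\]
uniformly on every compact of the rescaled plane, so $\lambda(c)^n(K_c-z_\ell(c))$ and $\lambda(c)^n E_c$ share the same Hausdorff limit $L_c$, proving self-similarity at $z_\ell(c)$. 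For each other $z_j(c)$ in \eqref{asdfasdflhjlkkjhxc}, a finite composition $\psi_j$ of univalent (possibly inverse) branches of $\mathbf{f}_c$ carries a neighborhood of $z_\ell(c)$ biholomorphically onto one of $z_j(c)$; a direct computation then yields $\lambda(c)^n(K_c-z_j(c))\to\psi_j'(z_\ell(c))\,L_c$, a closed set still invariant under multiplication by $\lambda(c)$.

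The main obstacle is the local invariance underlying the second step. In the classical rational setting one exploits complete invariance of $K_c$ under a single map, whereas here $\mathbf{f}_c$ is $(p\!:\!q)$-valued and $K_c$ is defined through the mere \emph{existence} of some bounded forward orbit. One must therefore combine the uniqueness of the bounded orbit at Misiurewicz parameters (Definition \ref{ljhlasihkjhasdfacvqeaf}) with the perturbative stability already articulated before the theorem through the holomorphic motion \eqref{asdfasdflhjlkkjhxc}, in order to certify that, inside a sufficiently small neighborhood of $z_\ell(c)$, the branch $g_c$ is the only piece of $\mathbf{f}_c^{\,n}$ responsible for the points of $K_c\cap U$. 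Once this reduction is achieved, Tan Lei's argument in the Königs coordinate transfers essentially verbatim.
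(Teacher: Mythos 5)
You take essentially the same route as the paper — linearize the repelling fixed point of the branch along the cycle via K\oe{}nigs' theorem, establish a local invariance of $K_c$ in the linearizing coordinate to obtain an (exactly) $\lambda(c)$-self-similar model, transfer back to the plane with an elementary distortion estimate in place of Theorem~\ref{gjdqwefd}, and then propagate to the other points $z_j(c)$ through univalent branches. However, the argument as written has a genuine gap precisely at the step you yourself flag as ``the main obstacle.'' You only prove the backward inclusion $g_c^{-1}(K_c\cap U)\subset K_c$, which in the chart gives $\lambda(c)^{-1}E_c\subset E_c$. This one-sided inclusion holds for the repelling fixed point of \emph{any} branch of $\mathbf{f}_c^{\,n}$ and makes no use whatsoever of the Misiurewicz hypothesis. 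It yields an increasing sequence $(\lambda(c)^n E_c)_r$, so a Hausdorff limit does exist, but that limit is a priori strictly larger than $E_c=\varphi_c(K_c\cap U)$; one loses the identification of the limit model with a slice of $K_c$ (which is what feeds into the proof of Theorem~\ref{gaeiogc} later), and one has not used the uniqueness of the bounded orbit in any way.

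What the paper actually establishes, and what you need, is the two-sided equality
\[
h_c\bigl(U_{c,r}\cap K_c\bigr)=V_{c,r}\cap K_c,
\]
that is, the \emph{forward} inclusion $h_c(U_{c,r}\cap K_c)\subset K_c$ as well. This is where the Misiurewicz structure enters: one must show that for $z$ near $z_\ell(c)$ and $c$ near $a$, the point $h_c(z)$ is the \emph{only} element of $\mathbf{f}_c^{\,n}(z)$ lying in $K_c$. The paper gets this from Lemma~\ref{gfgdgew}: the uniqueness of the bounded forward orbit of $\xi(c)$ persists under perturbation of $c$ (this itself is nontrivial and uses the escaping radius $R$, the finitely many branches near each orbit point, and equation~\eqref{lskdjfpoijcavsdfwd}), together with the continuity of $z\mapsto\mathbf{f}_c^{\,n}(z)$ in the Hausdorff topology and the fact that every point of $K_c$ has at least one forward image in $K_c$. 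Only with the equality does one obtain $(\lambda(c)\tilde B_\ell(c))_r=(\tilde B_\ell(c))_r$ as in \eqref{alhasdfoapivasdfsdfc}, i.e.\ the model set is \emph{exactly} $\lambda(c)$-self-similar, not merely swallowed by its dilate. You correctly diagnose that this is the crux and even name the ingredients (uniqueness at Misiurewicz parameters and the holomorphic-motion stability), but you defer it with ``once this reduction is achieved\ldots''. Since the same two-sided invariance is also needed for the inductive step $f_{j+1,c}(K_c\cap W_{j,c})=K_c\cap W_{j+1,c}$ at the pre-periodic points, this is not a peripheral detail but the heart of the proof; until it is supplied, the proposal does not establish the theorem. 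Also, a small notational slip: the branch of $\mathbf{f}_c^{\,n}$ fixing $z_\ell(c)$ is $h_c$ in the paper's notation; $g_c$ is the composition of the first $\ell$ branches sending $\xi(c)$ to $z_\ell(c)$.
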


\noindent In particular, \( K_a \) is asymptotic self-similarity about the Misiurewicz point \( a \in K_a \), as well as about every point in the associated repelling cycle $\alpha_a.$

\subsection{The transversality condition.}

According to \eqref{alsokkkdfoiwegasc}, 
if \( a \) is a Misiurewicz point for the family \( \mathbf{f}_c \) given by \eqref{lkjdllkjhalsoed}, then for every \( c \) sufficiently close to \( a \), there exists a univalent map \( g_c \) defined on a neighborhood of \( \xi(c) \), given by the composition of branches of $\mathbf{f}_c,$ which maps \( \xi(c) \) to the \( z_{\ell}(c) \) in  \eqref{asdfasdflhjlkkjhxc}.  See \eqref{alsokkkdfoiwegasc} for more details. The domain of \( g_c \) contains \( c \), and \( g_a(a) = z_{\ell} \).

Let \( h_c \) denote the composition of univalent branches of \( \mathbf{f}_c \) along the repelling cycle starting at \( z_{\ell}(c) \) and ending at \( z_{\ell + n}(c) \), as described in  \eqref{asdfasdflhjlkkjhxc} and \eqref{afsdfoijlkjblsdf}. Then \( h_c \) is a holomorphic map with a repelling fixed point at \( z_{\ell}(c) \). It can also be shown that \( g_c(c) \) lies in the domain of \( h_c \) for every \( c \) sufficiently close to \( a \). It follows that the function
\[
w(c) = h_c(g_c(c)) - g_c(c)
\]
is holomorphic on a neighborhood of \( a \), and satisfies \( w(a) = 0 \).

\vspace{0.2cm}
\noindent\textbf{Transversality condition (Definition~\ref{asdljhafspdoifasdf}).} \textit{We say that the family \( \mathbf{f}_c \) satisfies the \emph{transversality condition at \( a \)} if \( w'(a) \neq 0 \).}

\vspace{0.2cm}
\noindent\textbf{Conjecture.} \textit{The transversality condition holds at every Misiurewicz point for all integer exponents \( (p, q) \) with \( p > q >1  \) in the family}
\[
\mathbf{f}_c(z) = \sqrt[q]{z^p} + c.
\]

The transversality condition for polynomials  was first introduced by Douady and Hubbard \cite{DH84, douady1985dynamics}.

Currently, this conjecture has been proven only in the case \( (p, q) = (4, 2) \); see Theorem~\ref{dfasdfefd}. The challenge lies in the fact that transversality can, in principle, be approached through analytic, algebraic, or dynamical means. Generalizations of the dynamical proof from Douady and Hubbard \cite{DH84, douady1985dynamics}  are not applicable to   $\mathbf{f}_c.$ For now, our proof in the case $(p, q)=(4, 2)$ relies on algebraic methods; see section \ref{jgc}.

\subsection{Asymptotic similarity between $M_{p, q}$ and $K_a$.}We are now in a position to state the main result establishing the asymptotic similarity between the Multibrot set \( M_{p, q} \) and the filled Julia set \( K_a \) at a Misiurewicz parameter \( a \). The theorem is formulated in general terms and applies whenever the transversality condition is satisfied. In this sense, \emph{the problem of proving similarity has been reduced to verifying transversality.}

Recall from \eqref{asdfasdflhjlkkjhxc} that a Misiurewicz point \( a \) is mapped, after finitely many steps, to a repelling cycle $\alpha_a$ beginning at a point \( z_{\ell} \) with multiplier \( \lambda_a \).  The next theorem will be restated in full as Theorem~\ref{gaeiogc}.

\begin{thmm}[\bf Similarity between Multibrot and Julia Sets]\label{adlkjaladslfkjaoocvsf}
Let \( a \in \mathbb{C} \) be a Misiurewicz parameter for the family \eqref{lkjdllkjhalsoed}, and suppose that the transversality condition holds at \( a \). Then the Multibrot set \( M_{p,q} \) and the filled Julia set \( K_a \) are asymptotically self-similar about the point \( a \), with common scaling factor \( \lambda_a \). Moreover, the corresponding limit models coincide up to multiplication by a nonzero complex constant.
\end{thmm}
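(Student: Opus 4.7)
The plan is to adapt the Douady--Hubbard--Tan Lei scheme to the correspondence setting by combining the asymptotic self-similarity of $K_a$ already established in the preceding theorem with a suitable transfer from the dynamical to the parameter plane via the linearizer of the return map $h_a$. Concretely, I would fix a holomorphically varying linearizer $\phi_c$ of $h_c$ at the repelling fixed point $z_\ell(c)$, normalized by $\phi_c(z_\ell(c)) = 0$ and $\phi_c'(z_\ell(c)) = 1$; such a family exists in a neighborhood of $a$ because both $h_c$ and $z_\ell(c)$ depend holomorphically on $c$. Writing
\[
\Lambda_a \;=\; \phi_a\bigl(K_a \cap V\bigr)
\]
for a small disk $V$ around $z_\ell$, the conjugacy $\phi_a \circ h_a = \lambda_a \phi_a$ together with $h_a$-invariance of $K_a$ near $z_\ell$ yields $\lambda_a \Lambda_a = \Lambda_a$, and the preceding theorem (applied at $z_\ell$ and pulled back through the univalent branch $g_a$ that carries $a$ to $z_\ell$) identifies the limit model of $K_a$ at $a$ as $g_a'(a)^{-1}\Lambda_a$.

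Next, I would introduce the holomorphic function
\[
V(c) \;=\; \phi_c\bigl(g_c(c)\bigr),
\]
which vanishes at $c = a$ since $g_a(a) = z_\ell$. Differentiating the identities $\phi_c(z_\ell(c)) \equiv 0$ and $h_c(z_\ell(c)) \equiv z_\ell(c)$ at $c = a$ yields the key relation
\[
w'(a) \;=\; (\lambda_a - 1)\, V'(a),
\]
so the transversality hypothesis $w'(a) \neq 0$ is equivalent to $V$ being a local biholomorphism at $a$. Geometrically, for $c$ sufficiently close to $a$, the inclusion $c \in M_{p,q}$ is equivalent to $V(c) \in \Lambda_c$, where $\Lambda_c := \phi_c(K_c \cap V_c)$: the only candidate bounded orbit of $c$ under $\mathbf{f}_c$ must follow the branches of $g_c$ and then iterate $h_c$ near $z_\ell(c)$, by the orbit uniqueness supplied by the preceding theorem.

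The similarity then follows from a rescaling argument. Setting $c = a + \lambda_a^{-k}\,t$ with $t$ in a bounded region of $\mathbb{C}$, the expansion $V(c) = V'(a)(c - a) + O(|c-a|^2)$ and the $\lambda_a$-invariance of $\Lambda_a$ give
\[
\lambda_a^{k} V(c) \;\longrightarrow\; V'(a)\, t
\]
as $k \to \infty$. Consequently $\lambda_a^{k}(M_{p,q} - a)$ converges, in the Hausdorff metric on compact subsets of $\mathbb{C}$, to $V'(a)^{-1}\Lambda_a$. Since the limit model of $K_a$ at $a$ is $g_a'(a)^{-1}\Lambda_a$, the two sets are proportional by the nonzero complex constant $V'(a)/g_a'(a)$, establishing the asserted similarity with common scaling factor $\lambda_a$.

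The main obstacle I anticipate is the justification that one may replace $\Lambda_c$ by $\Lambda_a$ when passing to the rescaling limit. Since $K_c$ need not move holomorphically in a full neighborhood of $a$, Sullivan's $\lambda$-lemma does not apply directly. Instead, I plan to control the local escape dynamics of $h_c$ near $z_\ell(c)$ uniformly in $c$, using the uniform hyperbolicity of the cycle (the multiplier $|\lambda(c)|$ remains bounded away from $1$ on a neighborhood of $a$) together with Koebe-type distortion estimates, much as in Tan Lei's original treatment of the quadratic case.
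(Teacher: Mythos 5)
Your scheme is on the right track and matches the paper's in its high-level structure: you introduce $V(c) = \phi_c(g_c(c))$ (the paper's $u(c) = \varphi_c(g_c(c))$), you derive the identity $w'(a) = (\lambda_a-1)V'(a)$ by differentiating $\phi_c(z_\ell(c))\equiv 0$ and $h_c(z_\ell(c))\equiv z_\ell(c)$, and you reduce membership in $M_{p,q}$ near $a$ to the condition $V(c)\in\Lambda_c$. These are all the right moves, and the paper does the same. However, there are two genuine gaps.

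First, your claim that ``$c \in M_{p,q}$ is equivalent to $V(c) \in \Lambda_c$'' via ``orbit uniqueness supplied by the preceding theorem'' is not justified. The uniqueness statement in the paper's preliminary lemma concerns $\xi(c)$, not the critical value $c$ itself, and these are distinct points for $c\neq a$. Since the correspondence is only backward invariant on $K_c$ (a point of $K_c$ may have forward images outside $K_c$), one must show that among all $q^{\ell}$ branches of $\mathbf{f}_c^{\ell}$ defined near $c$, the only one whose image can intersect $K_c$ is $g_c$. The paper proves this via an escaping-radius argument (its Lemma 5.2, part (iv)) which is described as the most delicate step; you assert the conclusion without this analysis. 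Without it, the reverse inclusion $M_{p,q}\cap W\subset\{c: V(c)\in\Lambda_c\}$ is unjustified.

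Second, and more seriously, the convergence argument is not carried out. You correctly identify the main obstacle -- replacing $\Lambda_c$ by $\Lambda_a$ in the rescaling limit -- but your proposed fix (uniform hyperbolicity of $h_c$ plus Koebe distortion) does not supply what is actually needed. The mechanism that drives Tan Lei's Proposition 4.1, which the paper invokes (its Theorem 5.1), is the existence of a \emph{dense} subset $X'(a)\subset X(a)$ of points that move \emph{holomorphically} with $c$ and remain in $X(c)$. In this correspondence setting, that dense family comes from the repelling periodic points of $\mathbf{f}_a$, which are dense in $J_a = K_a$ by Theorem D of \cite{siqueira2025suhyperbolic}, together with the holomorphic continuation of each repelling cycle from Lemma \ref{ghdcsdffewdf}. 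This density result is highlighted in the paper as the key input precisely because the completeness of invariance that Tan Lei had for polynomials is unavailable for correspondences. Koebe estimates control distortion of a single univalent map, not the Hausdorff convergence of the sets $\lambda_a^{k}(M_{p,q}-a)$; without a holomorphically moving dense family inside $X(c)$, the passage from $V(c)\in\Lambda_c$ to the asserted Hausdorff limit does not follow. You should invoke the density of repelling cycles and the holomorphic motion of the associated repelling cycles to verify hypothesis (ii) of the Tan Lei criterion, as the paper does in its Lemma \ref{adfasdfabsdabasd}.
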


As a direct consequence of the transversality condition, already established for the case \( (p, q) = (4, 2) \), we obtain the following result. It will later be restated in full as Corollary~\ref{adflhlkjhasdlkjhfqewfdasdfc}.

\begin{thmm}[\bf Similarity for quadratic correspondences]
Let \( a \) be a Misiurewicz point for the family \( \mathbf{f}_c \) given by \eqref{lkjdllkjhalsoed}, with \( (p, q) = (4, 2) \). Then  \( K_a \) and  \( M_{4,2} \) are asymptotically self-similar about the point \( a \), with the same scaling factor, and the same limit model up to multiplication by a nonzero complex constant.\end{thmm}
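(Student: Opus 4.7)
The statement is a direct corollary of the two preceding main theorems, so the plan is essentially a bookkeeping exercise: verify that the hypotheses of Theorem~\ref{adlkjaladslfkjaoocvsf} are met in the special case $(p,q) = (4,2)$, and then invoke that theorem. The first step is to fix a Misiurewicz parameter $a \in \mathbb{C}$ for the correspondence $\mathbf{f}_c(z) = \sqrt{z^4} + c$, so that Definition~\ref{ljhlasihkjhasdfacvqeaf} gives a unique bounded forward orbit of $0$ landing on a repelling cycle $\alpha_a$ with multiplier $\lambda_a$, together with the holomorphic motion \eqref{asdfasdflhjlkkjhxc} of this cycle and the associated branch composition $w(c) = h_c(g_c(c)) - g_c(c)$ that appears in the transversality condition.

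The second step is to quote the algebraic transversality result for $(p,q) = (4,2)$, namely Theorem~\ref{dfasdfefd}, which asserts $w'(a) \neq 0$. Here one should be careful to check that the statement of that theorem applies at \emph{every} Misiurewicz point and is not merely generic, since the conclusion we want is uniform in $a$. Once transversality is in hand, the hypotheses of Theorem~\ref{adlkjaladslfkjaoocvsf} are satisfied verbatim, and its conclusion gives simultaneously that $K_a$ and $M_{4,2}$ are asymptotically self-similar about $a$, that the common scaling factor is $\lambda_a$, and that the limit models agree up to a nonzero multiplicative constant.

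Since every ingredient is already quoted from earlier parts of the paper, there is no genuine obstacle in this corollary. The only point where care is needed is that the transversality theorem for $(p,q) = (4,2)$ was proved through the semigroup representation $\langle z^2 + c, -z^2 + c\rangle$, and one should confirm that the function $w(c)$ used there is (up to a nonvanishing factor) the same as the one entering the similarity theorem; this amounts to matching the branch choices of $\mathbf{f}_c$ along the pre-periodic orbit with the two polynomial branches of the semigroup. Once this identification is made explicit, the corollary follows by concatenating the two theorems with no further work.
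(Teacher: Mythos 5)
Your proposal is correct and matches the paper's own approach exactly: the paper presents this as an immediate consequence of Theorem~\ref{gaeiogc} combined with the transversality result Theorem~\ref{dfasdfefd} for $(p,q)=(4,2)$. Your additional remark about matching the function $w(c)$ of Definition~\ref{asdljhafspdoifasdf} with the polynomial expression $F_{n+\check{\ell}}(c)-F_{\check{\ell}}(c)$ used in the algebraic transversality proof is a sound point of care, and the paper makes that identification (without elaboration) at the start of Section~\ref{jgc}.
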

\vspace{0.2cm}

\paragraph{\bf Remark.} By Theorem D in \cite{siqueira2025suhyperbolic}, if $a$ is Misiurewicz then \( K_a = J_a \); thus, repelling cycles are dense in $K_a$, which is a crucial ingredient in the proof of Theorem \ref{adlkjaladslfkjaoocvsf}.  It is worth noting that in the classical quadratic case, Tan Lei's proof of self-similarity relies on certain results of Douady and Hubbard, including the transversality property and the density of repelling periodic points in the filled Julia set. For algebraic correspondences, however, such results are not immediate: the Julia and Fatou sets are no longer completely invariant, and many classical tools fail to apply directly. This adds substantial complexity to the adaptation of Tan Lei's approach in our setting.

\section{Preliminaries}

The correspondence \eqref{lkjdllkjhalsoed} comes with an exponent $p/q > 1$, determined by its expression $\mathbf{f}_c(z) = \sqrt[q]{z^p} + c$. For any $\lambda > 1$, the real equation
\[
x^{p/q} - \lambda x - |c| = 0
\]
has two solutions, denoted by $x_1(\lambda) < x_0(\lambda)$. We say that $R_c > 0$ is an \emph{escaping radius} for $\mathbf{f}_c$ if $R_c > x_0(\lambda)$ for some $\lambda > 1$.  The properties of the escaping radius are described in~\cite[Section~2]{ETDS22}. We summarize a few of them as follows.

The \emph{basin of infinity}, denoted by $B_c(\infty)$, is the set of all points $z$ such that all forward orbits
\begin{equation}\label{asdkljhalkjhasdfasdfabfagsfhjwtwhdf}
z \xrightarrow{\mathbf{f}_c} z_1 \xrightarrow{\mathbf{f}_c} z_2 \xrightarrow{\mathbf{f}_c} \cdots
\end{equation}
converge to $\infty$ in the spherical metric. It is clear that $B_c(\infty)$ is forward invariant: if $z$ belongs to $B_c(\infty)$, then so does every forward image of $z$ under $\mathbf{f}_c$. Let $\mathcal{B}_R = \{z \in \mathbb{C} : |z| < R\}$. The complement of the basin of infinity is, by definition, \emph{the filled Julia set $K_c$}. 
According to~\cite[Theorem~2.1]{ETDS22}, the complement of $\mathcal{B}_R$ is also forward invariant and
\begin{equation}\label{safadfwdgasdvcqeqwsdfs}
\hat{\mathbb{C}} \setminus \mathcal{B}_R \subset B_c(\infty) = \hat{\mathbb{C}} \setminus K_c.
\end{equation} Hence $z\in K_c$ precisely when $z$ has at least one bounded forward orbit.

It can be shown that the escaping radius $R_c > 0$ is stable under small perturbations of the parameter. That is, if $R_a$ is an escaping radius for $\mathbf{f}_a$, then $R_c = R_a$ remains an escaping radius for $\mathbf{f}_c$, for every $c$ sufficiently close to $a$.   The iterates $\mathbf{f}_c^n$ are defined in the natural way: we say that $w \in \mathbf{f}_c^n(z)$ whenever there exists a finite forward orbit as in~\eqref{asdkljhalkjhasdfasdfabfagsfhjwtwhdf} with $z_n = w$.  The inverse $\mathbf{f}_c^{-1}$ is understood as the multivalued map $w \mapsto z$, sending $w$ to $z$ if and only if $\mathbf{f}_c$ maps $z$ to $w$. 

Note that $\mathbf{f}_c^{-1} \circ \mathbf{f}_c$ is not the identity: the correspondence $\mathbf{f}_c$ sends a nonzero point $z$ to $q$ values in the plane, and each of these has $p$ pre-images under the inverse correspondence. Therefore, in general, the composition $\mathbf{f}_c^{-1} \circ \mathbf{f}_c(z)$ consists of $p \cdot q$ points. We can also define $\mathbf{f}_c^n(A)$ and $\mathbf{f}_c^{-n}(A)$ for any subset $A$ of the complex plane. The image $\mathbf{f}_c^n(A)$ is the union of all sets $\mathbf{f}_c^n(z)$ with $z \in A$, while the preimage $\mathbf{f}_c^{-n}(A)$ is the union of all sets $\mathbf{f}_c^{-n}(z)$ with $z \in A$.

As shown in~\cite[equation~(3)]{ETDS22}, if $R$ is an escaping radius for $\mathbf{f}_c$, then
\begin{equation}\label{alhasdfoilkhlkjhasdfc}
K_c = \bigcap_{n > 0} \mathbf{f}_c^{-n}(\mathcal{B}_R).
\end{equation}

The following result is a well-known consequence of  \eqref{alhasdfoilkhlkjhasdfc} and \cite[Lemma 2.2]{ETDS22}.

\begin{thm}\label{adsfasdgglkppsdfsdf}
Let $R$ be an escaping radius of $\mathbf{f}_c.$ If $z \notin K_c$, then there exists an integer $k \geq 1$ such that $\mathbf{f}_c^k(z) \subset \mathbb{C}{\setminus}\mathcal{B}_R$.
\end{thm}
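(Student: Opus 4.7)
The plan is to derive the theorem as a direct unpacking of the characterization \eqref{alhasdfoilkhlkjhasdfc}, namely $K_c = \bigcap_{n > 0} \mathbf{f}_c^{-n}(\mathcal{B}_R)$. If $z \notin K_c$, this identity forces the existence of some (minimal) $k \geq 1$ with $z \notin \mathbf{f}_c^{-k}(\mathcal{B}_R)$; the substantive task is to translate this non-membership into the pointwise inclusion $\mathbf{f}_c^k(z) \subset \mathbb{C} \setminus \mathcal{B}_R$ claimed in the theorem.

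First I would unwind the definition of the multivalued inverse recalled in the Preliminaries: $z \in \mathbf{f}_c^{-k}(w)$ if and only if $w \in \mathbf{f}_c^k(z)$, and $\mathbf{f}_c^{-k}(\mathcal{B}_R)$ is by definition the union of $\mathbf{f}_c^{-k}(w)$ over $w \in \mathcal{B}_R$. This yields the key equivalence
\[
z \in \mathbf{f}_c^{-k}(\mathcal{B}_R) \;\Longleftrightarrow\; \mathbf{f}_c^k(z) \cap \mathcal{B}_R \neq \emptyset.
\]
Negating both sides immediately converts $z \notin \mathbf{f}_c^{-k}(\mathcal{B}_R)$ into $\mathbf{f}_c^k(z) \subset \mathbb{C} \setminus \mathcal{B}_R$, which is the desired conclusion. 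In this sense the theorem is a one-line consequence of \eqref{alhasdfoilkhlkjhasdfc} once the correct interpretation of the multivalued preimage is in place.

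The role of \cite[Lemma 2.2]{ETDS22}, together with the forward invariance recorded in \eqref{safadfwdgasdvcqeqwsdfs}, is to make this picture consistent. It ensures that $\mathbb{C} \setminus \mathcal{B}_R$ is forward invariant, so the sets $\mathbf{f}_c^{-n}(\mathcal{B}_R)$ form a nested decreasing family, and once $\mathbf{f}_c^k(z)$ sits entirely outside $\mathcal{B}_R$, every subsequent iterate $\mathbf{f}_c^{k+j}(z)$ does as well. I would quote these facts rather than reprove them, using them just to justify that a well-defined first escape time $k$ exists. The only subtlety I would flag, more as a bookkeeping pitfall than a real obstacle, is the multivalued dictionary: $z \in \mathbf{f}_c^{-k}(\mathcal{B}_R)$ means that \emph{some} length-$k$ forward orbit of $z$ meets $\mathcal{B}_R$, whereas the conclusion $\mathbf{f}_c^k(z) \subset \mathbb{C} \setminus \mathcal{B}_R$ asserts that \emph{every} length-$k$ forward orbit of $z$ has left $\mathcal{B}_R$. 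Keeping this quantifier flip straight is the one place where a careful reader could get confused, but beyond it no further dynamical input is needed.
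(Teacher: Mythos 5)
Your argument is correct and matches the paper's intended (uncited) proof: the paper offers no details beyond pointing to \eqref{alhasdfoilkhlkjhasdfc} and \cite[Lemma~2.2]{ETDS22}, and the content is exactly what you write — unwinding the multivalued-preimage convention shows that $z\notin\mathbf{f}_c^{-k}(\mathcal{B}_R)$ is synonymous with $\mathbf{f}_c^k(z)\subset\mathbb{C}\setminus\mathcal{B}_R$, so $z\notin K_c=\bigcap_{n>0}\mathbf{f}_c^{-n}(\mathcal{B}_R)$ immediately produces the required $k\ge 1$. Your remark about the quantifier flip (some orbit meets $\mathcal{B}_R$ versus every orbit has left $\mathcal{B}_R$) correctly identifies the only place a reader could stumble, and your reading of the role of Lemma~2.2/\eqref{safadfwdgasdvcqeqwsdfs} as supplying the nesting and forward invariance rather than the core step is accurate.
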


\subsection{Repelling cycles.}  Any univalent map $f$ satisfying $f(z) \in \mathbf{f}_c(z)$ for every $z$ in its domain is called a \emph{univalent branch of $\mathbf{f}_c$}.
 A forward orbit $(z_i)_{i=0}^{\infty}$ of the correspondence $\mathbf{f}_c$ is said to form a cycle of period $n$ if $z_i = z_{i+n}$ for all $i$. Except when $z_i$ is the critical point (so that $z_{i+1} = c$) there exists a unique univalent branch of $\mathbf{f}_a$ mapping $z_i$ to $z_{i+1}$. By composing these local branches along the cycle, we obtain a univalent map $f$ with a fixed point at $z_0$.

In this setting, the usual classification applies: the cycle is called \emph{repelling, attracting, super-attracting, geometrically attracting, indifferent}, and so on, according to the behavior of $f$ at the fixed point $z_0$. Known linearization results for holomorphic functions near fixed points extend naturally to this setting through this identification.

If a cycle contains the critical point, we call it a \emph{critical cycle}. In such cases, the usual notion of multiplier does not apply. Nevertheless, we adopt the convention of referring to all critical cycles as \emph{super-attracting}, even though a multiplier is not defined. It is straightforward to verify that any cycle not passing through the critical point can never be super-attracting. If $z$ belongs to a repelling cycle, then $z$ is \emph{repelling periodic point.}

\begin{defi}[\bf Julia set]\label{adfadweerdccgqwe}\normalfont
The \emph{Julia set} $J_c$ is defined as the closure of the union of all repelling cycles of $\mathbf{f}_c$.
\end{defi}

As a consequence of \cite[Lemma~2.1]{SS17} and equation~\eqref{alhasdfoilkhlkjhasdfc}, the set $K_c$ is compact. Since $K_c$ contains every cycle, it follows that $J_c \subset K_c$, and thus $J_c$ is also compact.

\subsection{Similarity.} \label{adsflkajlksjdfaoibadadf} Assume that $A$ and $B$ are nonempty closed subsets of  $\mathbb{C}.$ If $c$ belongs to $A$, then $T_{-c}A$ is the translate of $A$ by $-c$, hence  $0\in T_{-c}A.$ As usual, let $\mathbb{D}_s$ denote the set of all complex numbers with $|z| <s.$

 For every $r>0$, the set
\[A_r=  (A\cap \overline{\mathbb{D}}_r) \cup \partial \mathbb{D}_r \] is compact. Thus we are allowed to calculate the \emph{Hausdorff distance} between $A_r$ and $B_r$, denoted by $d_H(A_r, B_r).$ Now let $\lambda$ be any nonzero complex number with $|\lambda| >1.$ We think of $\lambda$ as an expanding factor $|\lambda|,$ followed by a rotation by the angle $\arg (\lambda).$ We say that $A$ is \emph{self-similar about} $c\in A$ \emph{with scale} $\lambda$ (or equivalently, $\lambda$-\emph{self-similar}) if \begin{equation} \label{lsopijlkasopijdf}(\lambda T_{-c} A)_r = (T_{-c}A)_r, \end{equation} for some $r>0.$ In this case, \eqref{lsopijlkasopijdf}  is also  true if we replace $r$ by any value in $(0, r).$  

The set $A$ is said to be \emph{asymptotically $\lambda$-self-similar about $c\in A$ with limit model $B$} if the sequence of sets $(\lambda^n T_{-c} A)_r$ converges to $B_r$ in the Hausdorff topology of compact sets, for some $r>0.$ In this case, the same property remains valid for any constant in   $(0,r),$ replacing $r$ accordingly.  
It is also common to say that \emph{$A$ is asymptotically self-similar about $c$ with scale $\lambda$ and limit model $B.$ } It can be shown that the limit model $B$ contains $0$ and is $\lambda$-self-similar about $0;$ moreover, each $\lambda^nB$ is also a limit set.

Suppose that $c$ belongs to $A\cap B.$ The sets $A$ and $B$ are  \emph{asymptotically similar about $c$} provided \begin{equation} \label{asdopslkjdsdflkkc}d_{H}( (\lambda T_{-c} A)_r, (\lambda T_{-c} B)_r     )  \to 0\end{equation} as $|\lambda| \to \infty$, with $\lambda \in \mathbb{C}.$ It is clear that  if \eqref{asdopslkjdsdflkkc} holds for some $r>0$, then it also holds for any $s$ in $(0, r),$ replacing $r$ with $s$).

The following result is a consequence of Proposition 2.4 of \cite[p.593]{TanLei}.

\begin{thm}\label{gjdqwefd} Let $A_1$, $A_2$ and $B$ be nonempty closed subsets of $\mathbb{C}.$ Suppose that $A_1$ is asymptotically $\lambda$-self-similar about $c\in A_1$, with limit model $B.$ Assume $f$ is a univalent map sending a relatively open subset of $A_1$ containing $c$ onto a relatively open subset of $A_2$ containing $f(c).$  Then $A_2$ is asymptotically self-similar about $f(c)$ with  the same scale $\lambda$ and limit model $f'(c) B.$

\end{thm}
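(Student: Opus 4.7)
The plan is to show that the rescaling of $A_2$ around $f(c)$ coincides, up to a vanishing error, with the image under multiplication by $f'(c)$ of the rescaling of $A_1$ around $c$, and then invoke the hypothesis on $A_1$. First I would use univalence of $f$ to get $f'(c)\neq 0$ and a Taylor expansion $f(c+w)=f(c)+f'(c)w+\phi(w)$ with $\phi(w)=O(|w|^{2})$. Then I would exploit the hypothesis that $f$ sends a relatively open subset of $A_1$ containing $c$ onto a relatively open subset of $A_2$ containing $f(c)$ to extract open neighborhoods $U\ni c$ and $V\ni f(c)$ such that $f\colon U\to V$ is conformal and $f(U\cap A_1)=V\cap A_2$.

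Next I would introduce the rescaled comparison maps
\[
F_n(v)=\lambda^{n}\bigl(f(c+\lambda^{-n}v)-f(c)\bigr)=f'(c)\,v+\lambda^{n}\phi(\lambda^{-n}v).
\]
Since $|\lambda^{n}\phi(\lambda^{-n}v)|\leq C|v|^{2}/|\lambda|^{n}$, the maps $F_n$ converge uniformly on every fixed compact set to $v\mapsto f'(c)v$, and the inverse branches $G_n=F_n^{-1}$ converge uniformly to $u\mapsto u/f'(c)$. Picking $r>0$ so that $(\lambda^{n}T_{-c}A_1)_{r}\to B_{r}$ and $\rho>0$ with $\rho<|f'(c)|r$, for $n$ large enough that $\overline{\mathbb{D}}(f(c),\rho/|\lambda|^{n})\subset V$, I would chase the bijection $f\colon U\cap A_1\to V\cap A_2$ through the rescalings to obtain the equivalence
\[
u\in\lambda^{n}T_{-f(c)}A_2\iff G_n(u)\in\lambda^{n}T_{-c}A_1,\qquad |u|\leq\rho.
\]

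In the third step I would combine this equivalence with the uniform convergence $G_n\to(\,\cdot\,)/f'(c)$ to show that $(\lambda^{n}T_{-f(c)}A_2)_{\rho}$ lies within vanishingly small Hausdorff distance of the set $f'(c)\cdot(\lambda^{n}T_{-c}A_1)_{\rho/|f'(c)|}$. Since multiplication by any nonzero constant is a bi-Lipschitz similitude of $\mathbb{C}$ that intertwines truncation with an appropriate rescaling of radii, the hypothesis $(\lambda^{n}T_{-c}A_1)_{r}\to B_{r}$ evaluated at the radius $\rho/|f'(c)|<r$ yields $(\lambda^{n}T_{-f(c)}A_2)_{\rho}\to(f'(c)B)_{\rho}$ in the Hausdorff metric, which is the desired asymptotic self-similarity with scale $\lambda$ and limit model $f'(c)B$.

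The main obstacle I expect is the careful bookkeeping around the appended boundary circles $\partial\mathbb{D}_\rho$ in the definition $A_{r}=(A\cap\overline{\mathbb{D}}_{r})\cup\partial\mathbb{D}_{r}$. The preimage $G_n(\partial\mathbb{D}_\rho)$ is a Jordan curve that only converges uniformly to $\partial\mathbb{D}_{\rho/|f'(c)|}$, so I need the strict inequality $\rho<|f'(c)|r$ as slack to absorb this wobble and ensure that, for all sufficiently large $n$, no points are lost or spuriously gained under truncation when comparing the $A_1$ and $A_2$ pictures. Once this quantitative matching of truncations is in place, the passage to the limit and the final triangle inequality in the Hausdorff metric are routine.
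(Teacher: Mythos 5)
Your proof is correct. Worth noting first: the paper does not actually prove Theorem~\ref{gjdqwefd}; it simply cites it as a consequence of Proposition~2.4 in Tan Lei's paper. So there is no internal proof to compare against, and what you have written is a self-contained argument that fills that gap.

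Your route — Taylor-expanding $f$ at $c$, passing to the rescaled maps $F_n(v)=\lambda^{n}\bigl(f(c+\lambda^{-n}v)-f(c)\bigr)$ and their inverses $G_n$, and using the conformal bijection $f\colon U\cap A_1\to V\cap A_2$ to transport the truncated sets — is the natural one and all the key estimates are right: $|\lambda^{n}\phi(\lambda^{-n}v)|\le C|v|^{2}/|\lambda|^{n}$ gives uniform convergence of $F_n$ to $v\mapsto f'(c)v$ on compacts, and since $f'(c)\neq 0$ the inverses $G_n$ converge to $u\mapsto u/f'(c)$ uniformly as well. The identity $(f'(c)X)_\rho=f'(c)\,X_{\rho/|f'(c)|}$, which you invoke when saying that multiplication by a nonzero constant ``intertwines truncation with an appropriate rescaling of radii,'' holds exactly because disks are rotation-invariant, so no error is introduced there. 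You also correctly identified the one genuinely delicate point — the wobbling boundary circle $G_n(\partial\mathbb{D}_\rho)$ — and the slack $\rho<|f'(c)|r$ is precisely the right tool to absorb it: points of $\lambda^{n}T_{-f(c)}A_2$ with $|u|<\rho$ land under $G_n$ strictly inside $\overline{\mathbb{D}}_r$ for $n$ large, so nothing leaks through the truncation, and the Hausdorff comparison reduces to the known convergence $(\lambda^{n}T_{-c}A_1)_{\rho/|f'(c)|}\to B_{\rho/|f'(c)|}$ (which the paper's Section~\ref{adsflkajlksjdfaoibadadf} guarantees holds for every radius in $(0,r)$). A complete write-up would flesh out the final triangle inequality, but the skeleton is sound and no idea is missing.
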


\section{Transversality}

This section is devoted to the study of the transversality condition for the family  $\mathbf{f}_c$ given by \eqref{lkjdllkjhalsoed}.

Let $b\in \mathbb{C}.$ If $f$  is a  univalent branch of the correspondence $\mathbf{f}_b, $ with $f$ locally defined at a nonzero point $z_0$, then $f=\phi +b,$ where $\phi$ is a univalent branch of $\mathbf{f}_0.$ Then $f_c=\phi +c$ is \emph{the perturbed branch}  in the sense that, for every $c$ sufficiently close to $b$, the holomorphic map $f_c$ is a branch of $\mathbf{f}_c,$ with $f_{b}=f$ corresponding to the base point of the holomorphic family $c\mapsto f_c$, that is,  $(c, z) \mapsto f_c(z)$ is holomorphic on a neighborhood of $(b, z_0)$ in $\mathbb{C}^{2}.$ 
 If $f$ comes with an index such as $f_i$ then we  denote the perturbed branch by $f_{i,c}$.  For a composition of branches $f=f_n \circ \cdots \circ f_1$, the corresponding perturbed branch is defined by
  \begin{equation}\label{asdfasoiwellkjsdf}
f_c = f_{n, c} \circ \cdots \circ f_{1, c}. 
\end{equation}
 If $n>1,$  $f_c$ depends on  the choice of each $f_i$ in the composition. However, if the maps $f_i$ are implicit from the context, then the perturbed branch $f_c$ is uniquely determined. If $n=1$, the situation is much simpler and we have only one possible perturbation $f_c$ associated to each $f.$

\begin{lem}[\bf Holomorphic motion of a repelling cycle] \label{ghdcsdffewdf} Let $b \in \mathbb{C}.$  Let $(z_i)_0^n$ be a repelling cycle of period $n$ of $\mathbf{f}_b$ with univalent branches $f_i$  of \( \mathbf{f}_b\) sending $z_{i-1}$ to $z_i$, for every $i.$ Extend the cycle and the sequence of branches to  $n$-periodic infinite sequences $(z_i)_0^\infty$ and $(f_i)_1^\infty,$ respectively. Then 
\begin{equation}\label{slkjspoijdfwdllm}h_j=f_{n+j} \circ \cdots \circ f_{j+1}
 \end{equation} 
  has a repelling fixed point at $z_j$, for every nonnegative integer $j.$ Let $h_{j,c}$ be the associated perturbed branch of $\mathbf{f}_c^n,$ as described in \eqref{asdfasoiwellkjsdf}. For every $j$ in $\mathbb{Z} \cap [0, n]$:

\begin{enumerate}[label=(\roman*)]

\item $(c, z) \mapsto h_{j, c}(z)$ is holomorphic on a neighborhood of $(b, z_j)$;

\item there exists a unique holomorphic function $\tilde{z}_j$ defined on a neighborhood of $b$ such that $\tilde{z}_j(b)=z_j$ and  $h_{j,c}$ has a   fixed point at $\tilde{z}_j(c),$ for every $c$ in the domain of $\tilde{z}_j;$ and

\item the perturbed branch $f_{j,c}$ sends $\tilde{z}_{j-1}(c)$ to $\tilde{z}_j(c)$ and
\[ \tilde{z}_0(c) \xrightarrow{f_{1,c}} \tilde{z}_1(c) \mapsto \cdots \xrightarrow{f_{n,c}} \tilde{z}_n(c)=\tilde{z}_0(c)\]
\noindent is a repelling cycle of the holomorphic map $z\mapsto h_{j,c}(z),$ for every $c$ in a neighborhood of $b.$

\end{enumerate}

\end{lem}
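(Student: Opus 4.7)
The plan is to handle parts (i) and (ii) by direct appeals to joint holomorphy and the implicit function theorem, and to deduce (iii) from a small algebraic identity that comes from the $n$-periodicity of the branches. For (i), I would use the fact that each perturbed single branch has the explicit form $f_{i,c} = \phi_i + c$ for a fixed univalent branch $\phi_i$ of $\mathbf{f}_0$, so that $(c,z) \mapsto f_{i,c}(z)$ is automatically jointly holomorphic on a neighborhood of $(b, z_{i-1})$. Because $f_{i,b}(z_{i-1}) = z_i$, I can then shrink these neighborhoods successively so that the range of $f_{i,c}$ lies in the domain of $f_{i+1,c}$ for $(c,z)$ near $(b, z_{i-1})$; the composition $h_{j,c}$ is then jointly holomorphic near $(b, z_j)$ as a composition of jointly holomorphic maps. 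For (ii), I would apply the holomorphic implicit function theorem to $F(c,z) := h_{j,c}(z) - z$ at $(b, z_j)$: since $F(b, z_j) = 0$ and
\[
\partial_z F(b, z_j) = h_{j,b}'(z_j) - 1 = \lambda - 1,
\]
where $\lambda$ is the multiplier of the repelling cycle, this derivative is nonzero; this yields a unique holomorphic $\tilde{z}_j$ on a neighborhood of $b$ with $\tilde{z}_j(b) = z_j$ and $h_{j,c}(\tilde{z}_j(c)) = \tilde{z}_j(c)$.

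Part (iii) is the heart of the argument, and the plan is to exploit the $n$-periodicity $f_{n+i,c} = f_{i,c}$ of the extended branch sequence to obtain the cyclic conjugation identity
\[
h_{j,c} \circ f_{j,c} \;=\; (f_{n+j,c} \circ \cdots \circ f_{j+1,c}) \circ f_{j,c} \;=\; f_{j,c} \circ (f_{n+j-1,c} \circ \cdots \circ f_{j,c}) \;=\; f_{j,c} \circ h_{j-1,c},
\]
where the middle equality uses $f_{n+j,c} = f_{j,c}$. I would then set $w_j(c) := f_{j,c}(\tilde{z}_{j-1}(c))$, which is holomorphic near $b$ with $w_j(b) = f_j(z_{j-1}) = z_j$, and use the identity together with the fixed-point property of $\tilde{z}_{j-1}(c)$ under $h_{j-1,c}$ to compute
\[
h_{j,c}(w_j(c)) \;=\; f_{j,c}\bigl(h_{j-1,c}(\tilde{z}_{j-1}(c))\bigr) \;=\; f_{j,c}(\tilde{z}_{j-1}(c)) \;=\; w_j(c).
\]
Hence $w_j$ is another holomorphic fixed-point branch of $h_{j,c}$ with $w_j(b) = z_j$, so the uniqueness from (ii) forces $w_j = \tilde{z}_j$; that is exactly the claim that $f_{j,c}$ sends $\tilde{z}_{j-1}(c)$ to $\tilde{z}_j(c)$. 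The repelling property is then immediate from continuity of $c \mapsto h_{j,c}'(\tilde{z}_j(c))$, which equals $\lambda$ at $c = b$.

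The main technical obstacle is not conceptual but one of uniform bookkeeping: I must choose a single neighborhood of $b$ on which all $n$ compositions, all $n$ applications of IFT, and all $n$ cyclic conjugation identities hold simultaneously, so that the $\tilde{z}_j(c)$ form an honest cycle. A minor sanity check, automatic from the repelling hypothesis, is that the cycle $(z_i)$ does not pass through the critical point $0$, so each $f_i$ is genuinely a univalent branch of $\mathbf{f}_b$. Once these uniform domains are in place, the mutual compatibility of the various $\tilde{z}_j$ is forced by IFT uniqueness and requires no further argument.
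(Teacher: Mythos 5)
Your proposal is correct and follows essentially the same line of argument as the paper: joint holomorphy for (i), the holomorphic implicit function theorem applied to $F(c,z)=h_{j,c}(z)-z$ with $\partial_z F(b,z_j)=\lambda-1\neq0$ for (ii), and for (iii) the cyclic conjugation identity $h_{j,c}\circ f_{j,c}=f_{j,c}\circ h_{j-1,c}$ (from $n$-periodicity) combined with IFT uniqueness to force $f_{j,c}(\tilde z_{j-1}(c))=\tilde z_j(c)$. Your write-up is, if anything, slightly cleaner in stating the conjugation identity explicitly; the paper compresses it into a single displayed chain that contains a small index typo ($f_{n+1,c}$ where $f_{n+j,c}=f_{j,c}$ is meant).
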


\begin{proof}  To the periodic sequence of maps \( (f_j)_{1}^{\infty} \), we associate an \( n \)-periodic sequence of perturbed branches \( (f_{j,c})_{1}^{\infty} \).
 Then   \begin{equation}\label{asdpoioasdfpowikdf}h_{j,c} = f_{n+j,c} \circ \cdots \circ f_{j+1,c}. \end{equation} 
 Let $F_j(c, z) = h_{j,c}(z) -z.$    The partial derivative $\partial F_j / \partial z$ at $(b,z_j)$  is $\lambda_j -1 \neq 0$, where $\lambda_j$ is the multiplier of the repelling fixed point  $z_j$ of $h_{j}.$  By the Implicit Function Theorem, there exists a unique map $\tilde{z}_j$ defined on a neighborhood of $b$ such that $\tilde{z}_j(b) =z_j$ and $\tilde{z}_j(c)$ is a fixed point of $h_{j,c}$, for every $c$ in the domain of $\tilde{z}_j.$ Since $(c, z) \mapsto h_{j,c}(z)$ is holomorphic, we may assume, without loss of generality, that $\tilde{z}_j(c)$ is a repelling fixed point of $h_{j,c},$ for every $c$ in the domain of $\tilde{z}_j.$ 
 
 It remains to show that $f_{j,c}$ sends $\tilde{z}_{j-1}(c)$ to $\tilde{z}_j(c).$ Indeed, let  $\tilde{w}_j(c)$ denote $f_{j,c}(\tilde{z}_{j-1}(c)).$ We will show that $\tilde{w}_j$ and $\tilde{z}_j$ coincide on   a neighborhood of $b$, using the uniqueness part of the Implicit Function Theorem. Notice that both functions have the same value at $b.$  By \eqref{asdpoioasdfpowikdf} and the periodicity of $(f_{j,c})_j,$ $$h_{j,c}(\tilde{w}_j(c)) = f_{n+1,c}\circ h_{j-1,c} (\tilde{z}_{j-1}(c)) =f_{n+j,c}(\tilde{z}_{j-1}(c)) = \tilde{w}_j(c).$$
 
 The proof is complete. 
 \end{proof}

\noindent {\bf Stability of the repelling cycle of a Misiurewicz point.} For the following lemma, we will assume that $a\in \mathbb{C}$ is a Misiurewicz point for the family \eqref{lkjdllkjhalsoed}, with an associated pre-periodic orbit
\begin{equation}\label{alkjsokjdkkdf}a \xrightarrow{f_1} z_1  \xrightarrow{f_1} z_2 \cdots \xrightarrow{f_{\ell}} z_{\ell} \to \cdots \xrightarrow{f_{\ell +n}} z_{\ell +n} = z_{\ell} \to \cdots \end{equation}
where $(f_j)_1^\infty$ is a pre-periodic  sequence of univalent branches and $z_\ell$ is the first point  that belongs to the repelling cycle $(z_i)_\ell^{\ell +n}$ of period $n$, which we denote by $\alpha_a$ or $\alpha(a).$ (In the next paragraph, $\alpha$ will be a function assigning  to each $c$ a repelling cycle $\alpha(c)$ with $\alpha(a)=\alpha_a$). By Lemma \ref{ghdcsdffewdf},  the following cycle depends holomorphically on $c$:
\begin{equation}\label{oaoijsmdfsebdf}\tilde{z}_\ell(c) \xrightarrow{f_{\ell +1,c}} \tilde{z}_{\ell +1}(c) \to \cdots \xrightarrow{f_{\ell+n,c}}  \tilde{z}_{\ell +n}(c) = \tilde{z}_{\ell}(c) \xrightarrow{f_{\ell +n+1,c}} \cdots\end{equation}
where $f_{j,c}$ is the perturbed branch of $f_j.$ Notice that  $(f_{j,c})_j$, with $j$ ranging from $\ell +1$ to $\infty,$ is an $n$-periodic sequence of univalent maps.

\begin{remark} \normalfont \label{gjdjwew}
Note that $\ell \geq 1$. Indeed, the point $a$ cannot lie on the cycle $\alpha_a$ without forcing the critical point to belong to the same cycle, since $\mathbf{f}_a^{-1}(a) = \{0\}$.
\end{remark}

  For every $c$ sufficiently close to $a,$ the periodic sequence \eqref{oaoijsmdfsebdf}  determines a \emph{repelling cycle $\alpha(c)$} of period $n.$ It follows that
\begin{equation}\label{alsokkkdfoiwegasc} g_c(z) = f_{\ell,c} \circ \cdots \circ f_{1,c}(z)
\end{equation}  is a holomorphic family in the sense that $(c, z)\mapsto g_c(z)$ is holomorphic on a neighborhood of $(a,a)$, with $g_a$ sending $a$ to $z_\ell.$

\begin{lem}\label{gfgdgew}  The map $(c, z) \mapsto g_c(z)$ in \eqref{alsokkkdfoiwegasc} is holomorphic on a neighborhood $V_a\times V_a$ of $(a,a).$ We may choose $V_a$ so that $g_c$ is univalent on $V_a,$ whenever $c\in V_a$. For every $c$ sufficiently close to $a$, $g_c(V_a)$ contains $\tilde{z}_{\ell}(c)$,  and $g_c$ sends a unique point $\xi(c)$  of $V_a$ to $\tilde{z}_\ell(c).$ The map $c \mapsto \xi(c)$ is holomorphic on a neighborhood of $a$ and $\xi(a)=a.$ Define $z_j(c)$ inductively by setting $z_0(c)= \xi(c)$ and $z_j(c)=f_{j,c}(z_{j-1}(c)).$ Then
\begin{equation}\label{alkjsoidfsqcs}
\xi(c) \xrightarrow{f_{1,c}} z_1(c) \xrightarrow{f_{2,c}} z_{2}(c) \to \cdots \xrightarrow{f_{\ell, c}} z_{\ell}(c) \to \cdots
\end{equation}
is the unique bounded forward orbit of $\xi(c)$ under $\mathbf{f}_c,$ for every $c$ in a neighborhood of $a.$ Moreover,  $(z_j(c))_\ell^{\infty}$ is $n$-periodic and coincides with the repelling cycle $\alpha(c).$ 
For parameters $c$ in a neighborhood of $a,$ the multiplier of $\lambda(c)$ of the cycle $\alpha(c)$  is a holomorphic function of  $c$. 
\end{lem}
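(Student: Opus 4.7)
I will split the lemma into a routine part --- essentially the Implicit Function Theorem dressed up in the notation of perturbed branches --- and one subtle ingredient: the uniqueness of the bounded forward orbit of $\xi(c)$.

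For the routine part, each perturbed branch $f_{j,c}$ is holomorphic in $(c,z)$ near $(a,z_{j-1})$ and univalent in $z$ there, using that $z_{j-1}\neq 0$ for every $j\geq 0$ (which holds because $a\neq 0$ is strictly pre-periodic, so $0$ never appears in the orbit). Composing, $g_c$ is holomorphic in $(c,z)$ near $(a,a)$, and $g_a'(a)=\prod_{j=1}^\ell f_j'(z_{j-1})\neq 0$; shrinking, I obtain a neighborhood $V_a$ on which $g_c$ is univalent for every $c\in V_a$. Since $g_a(a)=z_\ell=\tilde z_\ell(a)$, the Implicit Function Theorem applied to $g_c(z)=\tilde z_\ell(c)$ produces a unique holomorphic $\xi(c)\in V_a$ with $\xi(a)=a$. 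The inductive definition $z_j(c)=f_{j,c}(z_{j-1}(c))$ yields holomorphic iterates, and $z_\ell(c)=g_c(\xi(c))=\tilde z_\ell(c)$ places the orbit on the holomorphic cycle \eqref{oaoijsmdfsebdf} from index $\ell$ onward. The multiplier $\lambda(c)=h_{\ell,c}'(\tilde z_\ell(c))$ is holomorphic in $c$ by the chain rule.

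The subtle ingredient is the uniqueness of the bounded forward orbit of $\xi(c)$. Fix an escaping radius $R$ valid for every $c$ in a neighborhood of $a$. At $c=a$, each step $z_j$ of \eqref{alkjsokjdkkdf} admits $q$ possible successors: the ``correct'' $z_{j+1}$ and $q-1$ ``wrong'' values $w\in \mathbf{f}_a(z_j)\setminus\{z_{j+1}\}$. If some such $w$ had a bounded forward orbit under $\mathbf{f}_a$, concatenating the initial segment of \eqref{alkjsokjdkkdf} up to index $j$ with that orbit would give a second bounded forward orbit of $a$, contradicting the Misiurewicz hypothesis. Hence every such $w$ lies outside $K_a$, so by Theorem \ref{adsfasdgglkppsdfsdf} there is an integer $k(w)$ with $\mathbf{f}_a^{k(w)}(w)\subset\mathbb{C}\setminus\mathcal{B}_R$. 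By periodicity of the cycle from index $\ell$ onward, together with the finite pre-period, only finitely many distinct wrong points $w$ actually occur; let $k$ be the maximum of the corresponding escape times.

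For each wrong point $w$, let $w(c)$ denote the analogous perturbed value obtained by applying the corresponding perturbed branch of $\mathbf{f}_c$ to $z_j(c)$; then $\mathbf{f}_c^k(w(c))$ is a finite union taken over all length-$k$ branch sequences, each depending holomorphically on $c$ near $a$. By continuity and finiteness, after shrinking the neighborhood of $a$ once more, every such $w(c)$ satisfies $\mathbf{f}_c^k(w(c))\subset\mathbb{C}\setminus\mathcal{B}_R$. By the forward invariance of the complement of $\mathcal{B}_R$ recorded in \eqref{safadfwdgasdvcqeqwsdfs}, any forward orbit of $\xi(c)$ that chooses a wrong branch at some step escapes to infinity within at most $k$ further iterates, so the only bounded forward orbit of $\xi(c)$ is \eqref{alkjsoidfsqcs}. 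The principal obstacle is precisely this passage from pointwise escape-time bounds at $c=a$ to a uniform estimate in $c$; it is resolved by using the periodicity of the cycle to reduce an a priori infinite-time problem to a finite collection of wrong points controlled by holomorphic dependence of iterates on parameters.
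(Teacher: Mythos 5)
Your proposal is correct and follows essentially the same route as the paper: a routine Implicit Function Theorem argument for $\xi(c)$ and $z_j(c)=\tilde z_j(c)$ ($j\geq\ell$), and then the key step of showing that the $q-1$ ``wrong'' images at each step escape, uniformly in $c$, by reducing to a finite check via pre-periodicity. The only cosmetic difference is that you take a single maximal escape time $k$ over the finitely many distinct wrong points, whereas the paper works with neighborhoods $V_j$ of $a$ (one per index $j$) and observes these are eventually $n$-periodic so only finitely many intersections are needed; the two bookkeeping devices are equivalent.
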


\begin{proof} By \eqref{alsokkkdfoiwegasc}, it is easy to find $V_a$ such that $g_c$ is univalent on $V_a$, for every $c$ in $V_a.$ Moreover, $g_c(V_a)$ contains a neighborhood $W_\ell$ of $z_\ell$ for every $c$ sufficiently close to $a.$ Since $\tilde{z}_\ell(c)$ belongs to $W_{\ell}$ for parameters close to $a$, it follows that $g_c(V_a)$ contains $\tilde{z}_\ell(c)$, for every $c$ in a neighborhood of $a.$ We may define $\xi(c)$ by  $g_c^ {-1}(\tilde{z}_{\ell}(c))$, and it becomes clear that $c\mapsto \xi(c)$ is holomorphic on a neighborhood of $a$, with $\xi(a) =a.$

According to the definition of $z_j(c)$, the map $g_c$ sends $\xi(c)$ to $z_\ell(c)$, which implies $z_{\ell}(c) = \tilde{z}_\ell(c).$ Using Lemma \ref{ghdcsdffewdf} and induction, we show that if $z_j(c)$ coincides with $\tilde{z}_j(c),$ with $j\geq \ell$,  then  $$z_{j+1}(c) = f_{j+1,c}(\tilde{z}_j(c)) = \tilde{z}_{j+1}(c).$$ Hence $z_j(c) = \tilde{z}_j(c)$, for every $j\geq \ell.$  
To complete the proof, we only need to show that, for $c$ sufficiently close to $a$,
\eqref{alkjsoidfsqcs} is the only bounded orbit of $\xi(c)$. In fact,   we will show that there exists a neighborhood $\check{V}_a$ of $a$ such that
\begin{equation}\label{lskdjfpoijcavsdfwd}
\mathbf{f}_c(z_j(c)) \cap K_c = \{ z_{j+1}(c)\}
\end{equation}
for every $c$ in $\check{V}_a$ and every $j\geq 0.$ In view of \eqref{safadfwdgasdvcqeqwsdfs}, there exists an escaping radius $R>0$ that applies for every  $\mathbf{f}_c$ with $c$ sufficiently close to $a.$  Since  \eqref{alkjsokjdkkdf} is the only bounded forward orbit of $z_0=a$ under $\mathbf{f}_a$ and since $z_j(a)=z_j,$ the equation \eqref{lskdjfpoijcavsdfwd} holds for $c=a,$ and every $j\geq 0$, otherwise we would have two bounded orbits of $a$, which is impossible. Notice that $\mathbf{f}_a(z_j)$ consists of $q$ distinct points, and only one of them belongs to $K_a$, namely, $z_{j+1}.$ The other $q-1$ points are in the basin of infinity $\mathbb{C}{\setminus} K_a$, and therefore some iterate $\mathbf{f}_a^N$ of the correspondence send all these $q-1$ points to the forward invariant set $\mathbb{C}{\setminus}\mathcal{B}_R$  defined by \eqref{safadfwdgasdvcqeqwsdfs}. The set function $(c,z) \mapsto \mathbf{f}_c(z)$ is continuous in the Hausdorff topology, and therefore, for a small perturbation $(c,z)$ of $(a, z_j)$ we conclude that $\mathbf{f}_c(z)$ consists of $q$  points which are very close to the points of $\mathbf{f}_a(z_j)$, so that $q-1$ points of $\mathbf{f}_c(z)$ are sent to $\mathbb{C}{\setminus}\mathcal{B}_R$   by $\mathbf{f}_c^N$, leaving only one point of $\mathbf{f}_c(z)$ very close to $z_{j+1}$ that might belong $K_c.$ If $c$ is sufficiently close to $a$, then we apply this analysis to $\mathbf{f}_c(z_j(c))$, and the conclusion is that this image set consists of $q$ points, with at most one point in $K_c$; but $z_{j+1}(c)$ is in the image set, and it belongs to $K_c$ because it is part of the bounded orbit  \eqref{alkjsoidfsqcs}. Thus, for each $j\geq 0$, equation  \eqref{lskdjfpoijcavsdfwd} holds for every $c$ in a neighborhood $V_j$ of $a$. Since \eqref{alkjsoidfsqcs} is pre-periodic, $V_j$ is also pre-periodic, and  we may consider the finite intersection of all such sets $V_j$ with $j$ ranging from $0$ until $\ell+n.$ Denote this intersection by $\check{V}_a.$ It follows that \eqref{lskdjfpoijcavsdfwd} holds for every $c$ in $\check{V}_a$ and every $j\geq 0.$ 

Notice that if $(w_j)_0^\infty$ is a bounded orbit of $w_0=z_0(c)$, with $c$ in $\check{V}_a$, then whenever $w_j=z_j(c)$ for $0\leq j \leq n$, the next point $w_{n+1}$ belongs to $\mathbf{f}_c(z_n(c))$ and also belongs to $K_c$, since $w_{n+1}$ is part of a bounded orbit. From \eqref{lskdjfpoijcavsdfwd} we conclude that $w_{n+1} = z_{n+1}(c).$ The uniqueness of the bounded orbit \eqref{alkjsoidfsqcs}  is established by this induction process. 
\end{proof}

Suppose that $a \in \mathbb{C}$ is a Misiurewicz point. We already know that  for every $c$ in a neighborhood of $a$, the map $g_c$ defined by \eqref{alsokkkdfoiwegasc} sends $\xi(c)$ to the first point $z_{\ell}(c)=\tilde{z}_\ell(c)$ of the associated repelling cycle $\alpha(c)$ in \eqref{oaoijsmdfsebdf}.  Using the same maps $f_{j, c}$ that appear in \eqref{oaoijsmdfsebdf} and \eqref{alkjsoidfsqcs}, we define
\begin{equation} \label{afsdfoijlkjblsdf} h_c(z) = f_{\ell +n, c} \circ \cdots \circ f_{\ell +1, c}(z).
\end{equation}
Notice from 
\eqref{asdpoioasdfpowikdf} that  $h_c=h_{\ell,c}.$
By Lemma \ref{gfgdgew}, $z_{\ell}(c)$ is a repelling fixed point of $h_c$, for every $c$ in a neighborhood of $a.$ We will use the maps $h_c$ and $g_c$ to define the concept of transversality, which is a key ingredient for establishing the asymptotic similarity between the Multibrot set and the Julia set at Misiurewicz points.

\begin{defi}[\bf Transversality condition] \label{asdljhafspdoifasdf}\normalfont The family  $\mathbf{f}_c$, given by \eqref{lkjdllkjhalsoed}, is said to satisfy
 the \emph{transversality condition} at a Misiurewicz parameter $a\in \mathbb{C}$ if the derivative of \[ w(c) = h_c(g_c(c)) - g_c(c)\] is nonzero at $c=a.$
\end{defi}

\begin{thm}[\bf Transversality of quadratic correspondences]  \label{dfasdfefd}  The family $\mathbf{f}_c$, given by \eqref{lkjdllkjhalsoed} with \((p, q) = (4, 2),\) satisfies the transversality condition at every Misiurewicz point.\end{thm}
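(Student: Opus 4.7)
The plan is to exploit the defining algebraic feature of the case $(p,q)=(4,2)$: the correspondence $\mathbf{f}_c$ is represented by the two \emph{polynomial} branches $P_{+,c}(z) = z^2 + c$ and $P_{-,c}(z) = -z^2 + c$. Every univalent branch appearing in the pre-periodic orbit \eqref{alkjsokjdkkdf} of the Misiurewicz point $a$ is therefore of the form $f_{j,c}(z) = \sigma_j z^2 + c$ for a uniquely determined sign $\sigma_j \in \{+1,-1\}$, and consequently the compositions $g_c$, $h_c$ from \eqref{alsokkkdfoiwegasc} and \eqref{afsdfoijlkjblsdf}, together with the test function $w(c) = h_c(g_c(c)) - g_c(c)$, are \emph{polynomials} in $c$ with coefficients in $\mathbb{Z}[\sigma_1,\ldots,\sigma_{\ell+n}]$.

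First, I would differentiate along the orbit. Putting $\tilde z_0(c) = c$ and $\tilde z_k(c) = f_{k,c}(\tilde z_{k-1}(c))$, one checks $g_c(c) = \tilde z_\ell(c)$ and $h_c(g_c(c)) = \tilde z_{\ell+n}(c)$, so $w(c) = \tilde z_{\ell+n}(c) - \tilde z_\ell(c)$. Writing $z_k = \tilde z_k(a)$ and $u_k = \tilde z_k'(a)$, the chain rule together with $f_{k,c}(z) = \sigma_k z^2 + c$ yields the linear recursion $u_0 = 1$, $u_k = 1 + 2\sigma_k z_{k-1} u_{k-1}$. Unwinding this recursion and splitting the telescoping sum at $k = \ell$ produces the closed formula
\[
w'(a) \;=\; (\lambda_a - 1)\, u_\ell \;+\; D_a,
\]
where $\lambda_a = \prod_{i=\ell+1}^{\ell+n} 2\sigma_i z_{i-1}$ is the multiplier of the repelling cycle $\alpha_a$ guaranteed by Lemma \ref{ghdcsdffewdf}, and $D_a = \partial_c h_c(z_\ell)|_{c=a}$ is the explicit polynomial $\sum_{j=1}^{n}\prod_{i=j+1}^{n} 2\sigma_{\ell+i} z_{\ell+i-1}$. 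Implicit differentiation of $h_c(\tilde z_\ell(c)) = \tilde z_\ell(c)$ for the holomorphic motion of $z_\ell$ identifies the geometric content: $w'(a)\ne 0$ is equivalent to saying that the critical orbit $c \mapsto g_c(c)$ and the holomorphic motion of the cycle pass through $z_\ell$ with different velocities at $c = a$.

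Second, I would prove $w'(a)\ne 0$ by contradiction. Assume $(\lambda_a - 1) u_\ell + D_a = 0$. By Remark \ref{gjdjwew} we have $\ell \geq 1$, and because $0 \notin \alpha_a$ every factor $z_{i-1}$ entering $\lambda_a$, $u_\ell$ and $D_a$ is nonzero. Using the recursion $u_k = 1 + 2\sigma_k z_{k-1} u_{k-1}$ together with the fixed-point relation $z_{\ell+n} = z_\ell$, the hypothetical identity reduces to a polynomial identity in $\mathbb{Z}[\sigma_1,\ldots,\sigma_{\ell+n}][z_0,\ldots,z_{\ell+n-1}]$ that, after clearing denominators and using the correspondence identity $(\mathbf{f}_c(z) - c)^2 = z^4$ to eliminate the signs $\sigma_i$, must be shown to contradict both the repelling condition $|\lambda_a| > 1$ and the uniqueness of the bounded forward orbit \eqref{alkjsoidfsqcs} established in Lemma \ref{gfgdgew}.

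The hard part will be turning that algebraic identity into a concrete contradiction without appealing to the complex-analytic machinery (B\"ottcher coordinates, holomorphic fixed-point index, Thurston rigidity) that underlies the classical Douady--Hubbard proof in the Mandelbrot setting. The strategy is necessarily special to $(p,q)=(4,2)$, since it relies on the semigroup representation of $\mathbf{f}_c$ by two polynomial generators; this is precisely what makes the proof break for general exponents and explains why the statement in Theorem \ref{dfasdfefd} is restricted to quadratic correspondences.
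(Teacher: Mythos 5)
The setup in your proposal is correct and matches the paper's: you exploit the polynomial nature of the two branches $z \mapsto \pm z^2 + c$ to write $w$ as a polynomial in $c$ with integer coefficients, you differentiate along the orbit via the recursion $u_k = 1 + 2\sigma_k z_{k-1} u_{k-1}$, and your closed formula $w'(a) = (\lambda_a - 1)u_\ell + D_a$ is a correct restatement of what the paper establishes (there $w(c) = F_{\check\ell+n}(c) - F_{\check\ell}(c)$ with $F_{j+1} = \sigma_j F_j^2 + c$). Up to this point the two arguments are essentially the same computation in slightly different notation.

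The gap is the entire conclusion. You propose to derive a contradiction from $(\lambda_a - 1)u_\ell + D_a = 0$ by invoking the repelling condition $|\lambda_a| > 1$ and the uniqueness of the bounded orbit, and you concede that ``turning that algebraic identity into a concrete contradiction'' is ``the hard part.'' That hard part is precisely what a proof must supply, and the route you sketch does not lead there: neither $|\lambda_a| > 1$ nor uniqueness of the bounded orbit is incompatible with the numerical identity $(\lambda_a - 1)u_\ell + D_a = 0$ in any transparent way --- you have not given, and it is not clear one could give, an analytic or dynamical obstruction of that kind. The paper's actual mechanism is entirely different and arithmetic: it observes that the Misiurewicz parameter $a$ is an algebraic integer (being a root of the monic integer polynomial $F_{\check\ell+n} - F_{\check\ell}$), extends the $2$-adic valuation $v_2$ to $\overline{\mathbb{Q}}$, and then factors the periodicity relation $\sigma_{\check\ell+n-1}F_{\check\ell+n-1}(a)^2 = \sigma_{\check\ell-1}F_{\check\ell-1}(a)^2$ to isolate a linear factor $G_0$ (or $G_1$, $G_2$) vanishing at $a$. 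The decisive lemmas show $G_0'(a)/2 \equiv 1 \bmod \mathfrak{p}$ (respectively $G_1'(a) \equiv 1+i$, $G_2'(a) \equiv 1-i$), i.e.\ these derivatives have $2$-adic valuation zero, hence are nonzero; since $F_{\check\ell-1}(a) \neq 0$ by strict pre-periodicity, the factorization forces $w'(a) \neq 0$. This $2$-adic valuation argument is the heart of the proof and is absent from your proposal, so the proposal does not constitute a proof of the theorem.
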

We shall give an algebraic proof of this theorem in Section 
\ref{jgc}.

\section{Self-similarity for the Julia set}


Suppose that $a\in \mathbb{C}$ is a Misiurewicz point of the family \eqref{lkjdllkjhalsoed}. Let $(z_j(c))_0^\infty$ be the associated pre-periodic orbit of $z_0(c)=\xi(c)$,  as in \eqref{alkjsokjdkkdf} and \eqref{alkjsoidfsqcs}, with a pre-periodic sequence $(f_{j,c})_1^{\infty}$ of univalent branches of $\mathbf{f}_c,$ where each $f_{j,c}$ sends $ z_{j-1}(c)$ to $z_j(c).$ Recall that $\alpha(c)$ is the repelling cycle $(z_j(c))_{\ell}^{\ell +n}.$  We know from \eqref{afsdfoijlkjblsdf} that $h_c$, given by the composition of the $f_{j,c}$ along the cycle $\alpha(c)$, has a repelling fixed point at $z_{\ell}(c).$

 \begin{remark} \normalfont \label{alkjsdfoijcllksdfc}Following the  notation established in Lemma \ref{gfgdgew}, the  multiplier $h_c'(z_{\ell}(c))$ is denoted by $\lambda(c).$ As a consequence of the K\oe{}nigs Linearization Theorem, for every $c$ in a neighborhood $\mathcal{N}_a$ of the Misiurewicz point $a$,  there exists a unique univalent map $\varphi_c$ defined on a small conformal disk $V_c$ containing $z_{\ell}(c)$, with $\varphi_c(z_\ell(c))=0$ and $\varphi_c'(z_{\ell}(c)) =1$,  such that $U_c=h_c^{-1}(V_c)$ is compactly contained in $V_c$ and 
\begin{equation} \label{lkajlksdfscc} \lambda(c) \varphi_c(z)=\varphi_c(h_c(z)), \quad z \in U_c. 
\end{equation}
\noindent  Let $\lambda=\lambda(c).$ Since $z\mapsto \lambda z$ maps $\mathbb{D}_r$ onto $\mathbb{D}_{\lambda r},$ we may replace $U_c$ and $V_c$ by 
$\varphi_c^{-1}(\mathbb{D}_r)$ and $\varphi_c^{-1}(\mathbb{D}_{\lambda r})$, respectively, for any $r>0$ sufficiently small. Under this convention, the sets $U_c$ and $V_c$, now denoted  $U_{c,r}$ and $V_{c,r},$ are parameterized by $r$ and shrink to $z_{\ell}$(c) as $r\to 0.$ We may also assume that $\varphi_c$ is defined on an open set containing the closure of all such  $V_{c,r}.$ 

\end{remark}

\begin{thm}[\bf Asymptotic similarity for $K_c$] \label{gdasoiu}   Suppose that $a\in \mathbb{C}$ is a Misiurewicz point for the family \eqref{lkjdllkjhalsoed}, and let $f_{j,c}, \lambda(c), z_j(c)$ and  $\varphi_c$   be as  in  Lemma \ref{gfgdgew} and Remark  \ref{alkjsdfoijcllksdfc}.  If $\overline{V} \subset \operatorname{dom}(\varphi_c)$ and $V$ is an open set containing $z_\ell(c)$, let $\{B_j(c)\}_{j=0}^{\infty}$ be the sequence of compact sets inductively defined by  
\begin{equation}\label{slkjspoipjlaskjdfwedf}B_{\ell}(c)=\varphi_c(\overline{V} \cap K_c), \ \ B_{j}(c)=f_{j, c}'(z_{j-1}(c)) {\cdot} B_{j-1}(c).   \end{equation}
 For all $c$ in a neighborhood of $a$,  the Julia set $K_c$ is asymptotically $\lambda(c)$-self-similar about each $z_j(c),$ with limit model $B_j(c).$

\end{thm}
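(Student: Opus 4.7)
The plan is to first establish that $K_c$ is asymptotically $\lambda(c)$-self-similar about the repelling fixed point $z_\ell(c)$ of $h_c$ with limit model $B_\ell(c)$, and then transport this property to every other $z_j(c)$ via Theorem~\ref{gjdqwefd} applied to the univalent branches $f_{j,c}$ and their inverses.

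For the base case we first show that $B_\ell(c)$ is genuinely $\lambda(c)$-self-similar about $0$. In the K{\oe}nigs coordinate $\zeta = \varphi_c(z)$, the functional equation~\eqref{lkajlksdfscc} makes $h_c$ act as multiplication by $\lambda(c)$, so the identity $(\lambda(c) B_\ell(c))_r = (B_\ell(c))_r$ for small $r$ reduces to the local invariance
\[
z \in K_c \ \Longleftrightarrow\ h_c(z) \in K_c \qquad \text{for } z \text{ near } z_\ell(c).
\]
Both implications rely on Lemma~\ref{gfgdgew} and the uniqueness statement~\eqref{lskdjfpoijcavsdfwd}: any bounded forward orbit of a point $z$ near $z_j(c)$ must continue along the unique branch $f_{j+1,c}$, because the remaining $q-1$ images of $z$ fall into $\mathbb{C}\setminus\mathcal{B}_R$ under some iterate. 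Iterating this selection shows that the bounded orbit of $z$ shadows the repelling cycle through the prescribed branches and, in particular, visits $h_c(z)$ after $n$ steps, giving the forward implication. The converse follows by prepending the finite orbit $z, f_{\ell+1,c}(z), \ldots, h_c(z)$ to a bounded orbit of $h_c(z)$. Since $\varphi_c'(z_\ell(c)) = 1$, applying Theorem~\ref{gjdqwefd} to the biholomorphism $\varphi_c^{-1}$ transfers this self-similarity of $B_\ell(c)$ at $0$ to asymptotic $\lambda(c)$-self-similarity of $K_c$ at $z_\ell(c)$, with limit $(\varphi_c^{-1})'(0) \cdot B_\ell(c) = B_\ell(c)$.

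To propagate to the remaining $z_j(c)$, the same uniqueness argument shows that, for each $j$, the univalent branch $f_{j,c}$ restricts to a bijection between relatively open subsets of $K_c$ around $z_{j-1}(c)$ and $z_j(c)$. Theorem~\ref{gjdqwefd} then produces asymptotic $\lambda(c)$-self-similarity at $z_j(c)$ with limit $f_{j,c}'(z_{j-1}(c)) \cdot B_{j-1}(c) = B_j(c)$, matching~\eqref{slkjspoipjlaskjdfwedf}. Backward induction from $z_\ell(c)$ along the preorbit, via the inverse branches $f_{j,c}^{-1}$, reaches $\xi(c) = z_0(c)$; forward induction along the cycle, combined with $n$-periodicity, covers $j > \ell$. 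The closing consistency $B_{\ell+n}(c) = B_\ell(c)$ follows automatically from $h_c'(z_\ell(c)) = \lambda(c)$ and the $\lambda(c)$-self-similarity of $B_\ell(c)$.

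The principal obstacle is the local invariance step, since $K_c$ is only semi-invariant under the multivalued correspondence $\mathbf{f}_c$: the identity $\mathbf{f}_c^{-1}(K_c) = K_c$ fails in our setting, and one must single out the correct branches along which the filled Julia set is preserved near the repelling cycle. Lemma~\ref{gfgdgew} supplies this information at $a$, and its stability in $c$ is exactly what allows the K{\oe}nigs argument to proceed uniformly for all $c$ in a neighborhood of $a$.
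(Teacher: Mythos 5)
Your proof follows the same route as the paper: establish local invariance of $K_c$ under $h_c$ near $z_\ell(c)$, read off $\lambda(c)$-self-similarity of $B_\ell(c)$ from the K\oe{}nigs linearization \eqref{lkajlksdfscc}, then invoke Theorem~\ref{gjdqwefd} (via $\varphi_c^{-1}$, and again via the branches $f_{j,c}$) to transfer to $K_c$ at $z_\ell(c)$ and propagate the limit models $B_j(c)$ to every $z_j(c)$. The only minor imprecision is that \eqref{lskdjfpoijcavsdfwd} asserts uniqueness only at the exact points $z_j(c)$, so extending the branch-selection to nearby $z$ requires the short continuity-in-the-Hausdorff-topology argument the paper makes explicit in proving \eqref{hgdsew}; your sketch has the right idea (the $q-1$ spurious images escape to $\mathbb{C}\setminus\mathcal{B}_R$) but would need that perturbation step spelled out.
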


\begin{proof} Using the terminology adopted in Remark  \ref{alkjsdfoijcllksdfc}, 
we will prove that if $(c,r)$ is sufficiently close to $(a, 0)$, with $r>0,$ then 
\begin{equation}\label{hgdsew}
h_c\left(U_{c,r} \cap K_c\right)=V_{c,r} \cap K_c.
\end{equation}  The map $h_c$ is a forward branch of $\mathbf{f}_c^{n}$ and $h_c(z_{\ell}(c)) \in K_c.$  By Lemma \ref{gfgdgew},  $z_{\ell}(c)$ lies in the unique bounded forward orbit of  $z_0(c)=\xi(c).$ It follows that $h_c(z_{\ell}(c))$ is the only point in the forward image $\mathbf{f}_c^n(z_{\ell}(c))$ that belongs to $K_c.$ Since the complement of $K_c$ is open, using the continuity of $z \mapsto \mathbf{f}_c^n(z)$ one can show that any perturbation $z$ of $z_\ell(c)$ produces an image set $\mathbf{f}_c^n(z)$ that is very close to $\mathbf{f}_c^{n}(z_\ell(c))$; as a result, at most one point of $\mathbf{f}_c^n(z)$ lies in $K_c,$ namely $h_c(z).$ However, every point of $K_c$ has at least one image under $\mathbf{f}_c^n$ that remains in $K_c$, as  follows from the definition of $K_c$. Hence,  for every $z \in K_c$ in a neighborhood of $z_{\ell}(c)$, $h_c(z)$ is the only point of $\mathbf{f}_c^n(z)$ that remains in $K_c.$ By decreasing $r$, if necessary, we can ensure that the diameters of $U_{c,r}$  and $V_{c,r}$ are sufficiently small. By the previous argument,  it follows that \[h_c(U_{c,r}\cap K_c) \subset K_c\cap V_{c, r}.\]  The reverse inclusion $V_{c, r}\cap K_c \subset h_c(U_{c,r}\cap K_c)$ follows from the backward invariance of $K_c=\mathbf{f}_c^{-n}(K_c).$ Now let
\(\tilde{B}_{\ell}(c)=\varphi_c\left(K_c \cap \overline{V_{c,r}}\right).
\)
 Since $U_{c,r} = \varphi_c^{-1}(\mathbb{D}_r),$ it follows from \eqref{lkajlksdfscc} and  \eqref{hgdsew}   that
 \begin{equation} \label{alhasdfoapivasdfsdfc}
 \begin{split}
 \lambda(c) (\tilde{B}_{\ell}(c) \cap \mathbb{D}_r) &=  \lambda(c) (\varphi_c(\overline{V}_{c,r} \cap K_c)\cap \mathbb{D}_r ) \\  
 & =   \lambda(c) (\varphi_c(U_{c,r} \cap K_c) \cap \mathbb{D}_r)\\
 & = \varphi_c(V_{c,r} \cap K_c) \cap \mathbb{D}_{\lambda(c) r} \\ & = \varphi_c(\overline{V_{c,r}} \cap K_c) \cap \mathbb{D}_{\lambda(c) r} 
   = \tilde{B}_{\ell}(c) \cap \mathbb{D}_{\lambda(c) r}.
  \end{split}
 \end{equation}
\noindent Hence
\[ (\lambda(c) \tilde{B}_{\ell}(c)) \cap \mathbb{D}_{\lambda(c) r}=  \lambda(c)(\tilde{B}_{\ell}(c) \cap \mathbb{D}_r) = \tilde{B}_{\ell}(c) \cap \mathbb{D}_{\lambda(c) r}.\] By intersecting with $\mathbb{D}_r,$ it follows that
\(
(\lambda(c) \tilde{B}_{\ell}(c))_r=(\tilde{B}_{\ell}(c))_r.
\)
 Thus $\tilde{B}_{\ell}(c)$ is $\lambda(c)$-self-similar about $0.$ By Theorem \ref{gjdqwefd},  $K_c$  is asymptotically $\lambda(c)$-self-similar about $z_{\ell}(c)$, with limit model  $\tilde{B}_{\ell}(c),$ for then $\varphi_c'(z_\ell(c))=1.$  If $V$ is an open set containing $z_{\ell}(c)$ and $\overline{V} \subset \operatorname{dom}(\varphi_c)$, let \[B_{\ell}(c) =  \varphi_c(\overline{V} \cap K_c).     \]  Then $(\tilde{B}_{\ell}(c))_s = (B_{\ell}(c))_s,$ for some $s\in (0, r).$       It follows that $B_{\ell}(c)$ is also a limit model about $z_{\ell}(c).$

 A neighborhood $W_{j, c}$ of $z_j(c)$ is mapped onto a neighborhood $W_{j+1,c}$ of $z_{j+1}(c)$ by the univalent branch $f_{j+1,c}$  of $\mathbf{f}_c.$ As in the proof of \eqref{hgdsew}, after decreasing the diameter of $W_{j,c}$ if necessary, one can show  that 
 \[
 f_{j+1,c}(K_c\cap W_{j,c}) = K_c\cap W_{j+1,c}.
 \]
 Theorem \ref{gjdqwefd} implies that $K_c$ is asymptotically $\lambda(c)$-self-similar about each $z_j(c)$, from which \eqref{slkjspoipjlaskjdfwedf} follows inductively.
  \end{proof}

\section{Self-similarity for the Multibrot set}

We begin this section by recalling a well-known result due to Tan Lei.

\begin{thm}[\bf TAN Lei, 1990] \label{gkdjwerdfl}  Let $u$ and $\lambda$ be holomorphic functions defined on a neighborhood $U$ of $a \in \mathbb{C}$ with $u'(a) \neq 0,$ $u(a)=0,$ and $|\lambda(a)|>1.$ Suppose that $X$ is a  closed subset of $U \times \mathbb{C}$ such that

\begin{enumerate}[label=(\roman*)]
\item   for every $c$ in a neighborhood of $a$ contained in $U$,  \[0\in X(c) =\{x: (c,x) \in X \}\] and $X(c)$  is $\lambda(c)$-self-similar about zero with the same $r>0$:  \[(\lambda(c) X(c))_r = (X(c))_r;\] 
\item  there exists a dense subset $X'(a) \subset X(a)$ such that, for every $x\in X'(a)$, there exists a holomorphic function $\zeta_x$ defined on a neighborhood $V_x\subset U$ of $a$ such that  $\zeta_x(a) = x$ and $\zeta_x(c) \in X(c),$ for every $c\in V_x.$

\end{enumerate}
Under hypotheses (i) and (ii), the set   $$M_u=\{c\in U: u(c) \in X(c)\}$$ is asymptotically $\lambda(a)$-self-similar about $a,$ with limit model given by $u'(a)^{-1}{\cdot}X(a).$
\end{thm}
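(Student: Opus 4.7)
My plan is to follow Tan Lei's original scaling strategy: rescale the parameter about $a$ by the expanding factor $\lambda(a)^n$, use the self-similarity of $X(c)$ to amplify the defining condition $u(c) \in X(c)$ to unit scale, and pass to the Hausdorff limit. As a preparatory step I would first establish Hausdorff continuity of the family $c \mapsto X(c) \cap \overline{\mathbb{D}}_r$ at $c = a$. Closedness of $X \subset U \times \mathbb{C}$ yields upper semicontinuity: any accumulation point of a sequence $y_n \in X(c_n)$ with $c_n \to a$ lies in $X(a)$. For lower semicontinuity I would invoke hypothesis (ii): every point of the dense set $X'(a) \subset X(a)$ admits a holomorphic motion $\zeta_x(c) \in X(c)$, so by density every point of $X(a)$ is approximated by points of $X(c)$ as $c \to a$.

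Next, iterating the self-similarity condition in (i) gives $X(c) \cap \overline{\mathbb{D}}_r = \lambda(c)^k X(c) \cap \overline{\mathbb{D}}_r$ for every $k \geq 0$, so that whenever $|y| \leq r/|\lambda(c)|^n$,
\[
y \in X(c) \iff \lambda(c)^n y \in X(c) \cap \overline{\mathbb{D}}_r.
\]
I would then rescale: set $c = a + w/\lambda(a)^n$ with $|w| \leq R$ for an arbitrary but fixed $R > 0$. Taylor expansion yields
\[
u(c) = \frac{u'(a) w}{\lambda(a)^n} + O\!\left(|\lambda(a)|^{-2n}\right), \qquad \lambda(c) = \lambda(a) + O\!\left(|\lambda(a)|^{-n}\right),
\]
so $\lambda(c)^n/\lambda(a)^n = (1 + O(|\lambda(a)|^{-n}))^n \to 1$, and therefore $\lambda(c)^n u(c) \to u'(a) w$ uniformly in $|w| \leq R$. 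For $n$ large enough one also has $|u(c)| \leq r/|\lambda(c)|^n$, so the amplification equivalence above applies.

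Putting everything together, the condition $c \in M_u$ after rescaling to $w = \lambda(a)^n(c-a)$ becomes $\lambda(c)^n u(c) \in X(c) \cap \overline{\mathbb{D}}_r$, which, by the preceding step, converges to $u'(a) w \in X(a)$. The upper direction of Hausdorff convergence of $X(c)$ combined with the uniform convergence $\lambda(c)^n u(c) \to u'(a) w$ gives accumulation points of $\lambda(a)^n (M_u - a)$ inside $u'(a)^{-1} X(a)$; the reverse inclusion requires a small fixed-point argument, producing, for each $y_k = \zeta_{y_k}(a) \in X'(a)$ close to $u'(a) w$, a parameter $c_n$ solving the implicit equation $u(c_n) = \zeta_{y_k}(c_n)/\lambda(c_n)^n$, whose solution satisfies $\lambda(a)^n(c_n - a) \to w$. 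This will yield Hausdorff convergence of $\lambda(a)^n(M_u - a) \cap \overline{\mathbb{D}}_R$ to $u'(a)^{-1} X(a) \cap \overline{\mathbb{D}}_R$, which is precisely the asymptotic $\lambda(a)$-self-similarity with limit model $u'(a)^{-1} X(a)$. The main obstacle is the lower-semicontinuity direction in the preparatory step, where hypothesis (ii) is essential: without a dense family of holomorphic motions in $X(c)$, one only recovers a one-sided inclusion in the Hausdorff limit. A secondary technical subtlety is controlling the iterated factor $\lambda(c)^n$ when $c$ itself depends on $n$, which rests on the tight estimate $\lambda(c) - \lambda(a) = O(|\lambda(a)|^{-n})$ forced by the scaling $|c-a| = |w|/|\lambda(a)|^n$.
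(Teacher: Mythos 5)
Your proposal is correct and faithfully reconstructs Tan Lei's original rescaling argument (Proposition~4.1 of the cited 1990 paper), which is precisely the source the paper invokes: the paper's ``proof'' is only a reference, not an argument, so you are supplying the omitted details. The ingredients you identify—upper semicontinuity of $c\mapsto X(c)$ from closedness of $X$, lower semicontinuity from the dense holomorphic motions of hypothesis (ii), amplification of $u(c)\in X(c)$ to unit scale via iterated $\lambda(c)$-self-similarity, the estimate $\lambda(c)^n u(c)\to u'(a)w$ under the scaling $c=a+w/\lambda(a)^n$ (together with the subtle control of $(\lambda(c)/\lambda(a))^n\to 1$), and the contraction/implicit-function step producing parameters in $M_u$ for the reverse inclusion—are exactly the steps of Tan Lei's proof.
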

\begin{proof}[References] This corresponds to Proposition 4.1 on page 601 of \cite{TanLei}. The statement has been simplified to suit our purposes, omitting certain hypotheses that are unnecessary in our setting. For instance, assuming that both $u$ and $\lambda$ are  holomorphic ensures that conditions $(3)$ and $(4)$  of Proposition 4.1 of \cite{TanLei} are automatically satisfied. 
\end{proof}

  \begin{defi}[\bf Multibrot set]\label{lkjasdfasdfqefd}\normalfont In the parameter space, the set $M_{p,q}$ consists of all  $c \in \mathbb{C}$ for which $0 \in K(\mathbf{f}_c)$, where $\mathbf{f}_c$ is the family given by \eqref{lkjdllkjhalsoed}. 
  \end{defi}

  This set generalizes the classical Mandelbrot set associated with quadratic polynomials.

If $a \in \mathbb{C}$ is a Misiurewicz parameter, then \cite[Theorem D]{siqueira2025suhyperbolic}  ensures that $K_a = J_a$  and 
\[
a \in M_{p,q} \cap J_a.
\]
A remarkable similarity between $M_{p,q}$ and $J_a$ emerges at small scales around the point $a$:

\begin{thm}[\bf Similarity between  the Multibrot  and  Julia sets] \label{gaeiogc} Suppose that $a \in \mathbb{C}$ is a Misiurewicz parameter for the family \eqref{lkjdllkjhalsoed}. Assume further that the transversality condition holds at $a$. Then both the Julia set $J_a$ and the Multibrot set $M_{p,q}$ are asymptotically self-similar about $a$, with the same scale \(\lambda=\lambda(a) \) made explicit in Theorem \ref{gdasoiu}. Their respective limit models coincide up to multiplication by a nonzero complex constant. More precisely, the limit model about $a \in M_{p,q}$ is $\mu_a B_{0}(a)$, where $B_{0}(a)$ is the limit model about $a\in K_a$ presented in Theorem~\ref{gdasoiu} and  $\mu_a$ is defined by \eqref{lkslkjsdfoijwlkjjjkdfwd}.

\end{thm}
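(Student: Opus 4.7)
The plan is to reduce the claim to Tan Lei's Theorem~\ref{gkdjwerdfl}, applied in Koenigs coordinates near the repelling cycle $\alpha(c)$. The self-similarity of $J_a=K_a$ is already supplied by Theorem~\ref{gdasoiu} at $z_0(a)=\xi(a)=a$, so the real task is to handle $M_{p,q}$ and then match limit models.

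First I would give a purely dynamical local description of $M_{p,q}$ near $a$. Since $\mathbf{f}_c(0) = c$, the condition $0 \in K_c$ is equivalent to $c \in K_c$. I would then show that, for $c$ in a small neighborhood $N$ of $a$,
\[
c \in K_c \iff g_c(c) \in K_c,
\]
where $g_c$ is the univalent branch constructed in Lemma~\ref{gfgdgew}. The direction ``$\Leftarrow$'' is trivial: prepend $c\to f_{1,c}(c)\to\cdots\to g_c(c)$ to a bounded orbit from $g_c(c)$. The nontrivial direction uses exactly the escape-separation mechanism of~\eqref{lskdjfpoijcavsdfwd}: of the $q$ points in $\mathbf{f}_a(z_j(a))$ only $z_{j+1}(a)$ lies in $K_a$, a property which persists under perturbation of $(c,z)$, forcing any bounded orbit of $c$ with $c$ near $a$ to run through the branches $f_{1,c},\dots,f_{\ell,c}$.

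Next I would linearize. With $\varphi_c$ as in Remark~\ref{alkjsdfoijcllksdfc}, set
\[
u(c) = \varphi_c\bigl(g_c(c)\bigr), \qquad X(c) = \varphi_c\bigl(\overline{V_{c,r}} \cap K_c\bigr),
\]
so that $M_{p,q} \cap N = \{c \in N : u(c) \in X(c)\}$ after possibly shrinking $N$. Hypothesis~(i) of Theorem~\ref{gkdjwerdfl}, the $\lambda(c)$-self-similarity of $X(c)$ about $0$ with uniform $r$, is exactly~\eqref{alhasdfoapivasdfsdfc} from the proof of Theorem~\ref{gdasoiu}. Hypothesis~(ii) follows by combining Theorem~D of~\cite{siqueira2025suhyperbolic} (density of repelling cycles in $K_a = J_a$) with Lemma~\ref{ghdcsdffewdf}, which moves each such cycle holomorphically in $c$; composing with $\varphi_c$ produces the required holomorphic selections $\zeta_x$.

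To verify $u'(a) \neq 0$, I would use the normalization $\varphi_c(z_\ell(c)) = 0$, $\varphi_c'(z_\ell(c)) = 1$ to expand $\varphi_c^{-1}(s) = z_\ell(c) + s + O(s^2)$, and combine with the Koenigs equation $\varphi_c(h_c(z)) = \lambda(c)\varphi_c(z)$ to obtain
\[
w(c) = h_c\bigl(g_c(c)\bigr) - g_c(c) = (\lambda(c) - 1)\, u(c) + O\bigl(u(c)^2\bigr).
\]
Since $u(a)=0$, differentiating at $c=a$ gives $w'(a) = (\lambda(a) - 1)\, u'(a)$, so the transversality hypothesis (together with $|\lambda(a)|>1$) is equivalent to $u'(a)\neq 0$. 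Theorem~\ref{gkdjwerdfl} then delivers asymptotic $\lambda(a)$-self-similarity of $M_{p,q}$ about $a$ with limit model $u'(a)^{-1}\,X(a)$. Iterating the recursion~\eqref{slkjspoipjlaskjdfwedf} at $c=a$ gives $X(a)=B_{\ell}(a) = g_a'(a)\cdot B_0(a)$, so the Multibrot limit model equals $\mu_a\cdot B_0(a)$ with $\mu_a = g_a'(a)/u'(a)$, matching~\eqref{lkslkjsdfoijwlkjjjkdfwd}.

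The main obstacle is the computation just above: cleanly identifying $u'(a)$ with $w'(a)$ up to the factor $\lambda(a)-1$. The coordinate $\varphi_c$ depends on $c$, and a priori $\partial_c\varphi_c$ could contaminate $u'(a)$. The saving grace is the normalization $\varphi_c'(z_\ell(c))=1$, which forces the $c$-dependence of $\varphi_c$ to enter only at order $O((z-z_\ell(c))^2)=O(u(c)^2)=O((c-a)^2)$ near the relevant point, and hence to drop out at $c=a$. Getting this bookkeeping right is the one delicate analytic point; everything else is the assembly of Theorem~\ref{gdasoiu}, Lemma~\ref{ghdcsdffewdf}, and Tan Lei's criterion.
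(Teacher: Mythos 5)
Your proposal follows the same overall route as the paper, and your derivation of $w'(a)=(\lambda(a)-1)\,u'(a)$ by expanding the inverse Koenigs map $\varphi_c^{-1}(s)=z_\ell(c)+s+O(s^2)$ is in fact cleaner than the paper's, which differentiates $F(c,z)=\varphi_c(z)$ along $\beta(c)=g_c(c)$ and $z_\ell(c)$ separately and then uses a limit along a sequence $c_n\to a$ together with Weierstrass's theorem to control the Taylor remainder of $h_{c_n}$. Your worry that $\partial_c\varphi_c$ might contaminate $u'(a)$, and your observation that the normalization $\varphi_c'(z_\ell(c))=1$ together with $u(a)=0$ pushes it into the $O(u^2)$ term, is exactly the right point to address. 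There is, however, a genuine gap in your verification of hypothesis~(ii) of Theorem~\ref{gkdjwerdfl}.

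Setting $X(c)=\varphi_c(\overline{V_{c,r}}\cap K_c)$ without attaching $\partial\mathbb{D}_r$ is not enough. Density of the repelling cycles $\mathcal{R}_a$ in $K_a=J_a$ does \emph{not} imply density of $\mathcal{R}_a\cap\overline{V_{a,r}}$ in $K_a\cap\overline{V_{a,r}}$: a point of $K_a\cap\partial V_{a,r}$ may be approached from within $K_a$ only from outside the disk, so no sequence of repelling points inside $\overline{V_{a,r}}$ reaches it. The paper's Remark~\ref{asdfadacbbsdasb} makes this concrete (a circle plus a diameter, cut by a disk touching the circle). Even if such a boundary point is itself repelling periodic, its holomorphic motion $\tilde{w}(c)$ from Lemma~\ref{ghdcsdffewdf} may leave $\overline{V_{c,r}}$ for $c\neq a$, so the candidate selection $\zeta_x(c)=\varphi_c(\tilde{w}(c))$ would exit $X(c)$. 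The paper repairs this in Lemma~\ref{gdsdgeweqdfb} by taking $X(c)=Y(c)\cup\partial\mathbb{D}_r$: boundary targets receive constant selections $\zeta_x\equiv x$, and for interior targets density of $\mathcal{R}_a$ in $K_a$ does suffice. Since this modification changes $X(a)$ only on $\partial\mathbb{D}_r$, the truncated limit model $u'(a)^{-1}X(a)$ near $0$, and hence the identification with $\mu_a B_0(a)$, is unaffected; but as written your proof does not establish Tan~Lei's hypothesis~(ii).
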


The proof will be presented after some preparatory lemmas. Since the transversality condition holds for quadratic correspondences (see Theorem~\ref{dfasdfefd}), we obtain the following immediate consequence.

\begin{cor}[\bf Similarity for quadratic correspondences] \label{adflhlkjhasdlkjhfqewfdasdfc}Let $a$ be a Misiurewicz point for the family $\mathbf{f}_c$, given by \eqref{lkjdllkjhalsoed} with \((p, q) = (4, 2).\) Then $J_a$ and $M_{4,2}$ are asymptotic self-similar about $a$ with the same scale,  and the same limit model  up to multiplication by $\mu_a.$ 
\end{cor}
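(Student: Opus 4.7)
The plan is to apply Tan Lei's Theorem~\ref{gkdjwerdfl} to the Multibrot set, reducing everything to the self-similarity of $K_a$ already established in Theorem~\ref{gdasoiu}. The first step is to rephrase membership in $M_{p,q}$ locally near $a$. Since $0$ is the critical point, $\mathbf{f}_c(0) = \{c\}$ is a singleton, so $0 \in K_c$ if and only if $c \in K_c$. Moreover, by the same perturbation argument used in the proof of Lemma~\ref{gfgdgew} — in particular the analogue of~\eqref{lskdjfpoijcavsdfwd} applied to $c$ rather than to $\xi(c)$ — for $c$ in a small neighborhood of $a$ the only candidate forward orbit of $c$ that can stay bounded is the one obtained by iterating the perturbed branches $f_{1,c}, f_{2,c}, \ldots$, so $c \in K_c$ if and only if $g_c(c) \in K_c$. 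Using the K\oe{}nigs coordinate $\varphi_c$ at $z_\ell(c)$ from Remark~\ref{alkjsdfoijcllksdfc}, I set
$$u(c) = \varphi_c(g_c(c)), \qquad X(c) = \varphi_c\bigl(K_c \cap \overline{V_{c,r}}\bigr),$$
so that on a neighborhood of $a$ the Multibrot set is exactly $\{c : u(c) \in X(c)\}$, i.e.\ the set $M_u$ of Theorem~\ref{gkdjwerdfl}.

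Next I would verify the hypotheses of Theorem~\ref{gkdjwerdfl} one by one. Holomorphy of $u$ and $\lambda$ near $a$ is immediate from Lemma~\ref{gfgdgew} and the construction of $\varphi_c$, and $u(a) = \varphi_a(z_\ell) = 0$. The uniform self-similarity $(\lambda(c) X(c))_r = (X(c))_r$ in a single $r > 0$ is read off directly from the chain of identities~\eqref{alhasdfoapivasdfsdfc} in the proof of Theorem~\ref{gdasoiu}. For the density condition, I would invoke $K_a = J_a$ (\cite[Theorem~D]{siqueira2025suhyperbolic}) so that repelling periodic points of $\mathbf{f}_a$ are dense in $K_a \cap \overline{V}$; each such point admits a holomorphic motion by Lemma~\ref{ghdcsdffewdf}, and pushing these motions through the jointly holomorphic $\varphi_c$ supplies the required lifts $\zeta_x(c) \in X(c)$.

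The decisive step is $u'(a) \neq 0$, which is where the transversality condition enters. My plan is to start from the K\oe{}nigs functional equation $\varphi_c(h_c(z)) = \lambda(c)\varphi_c(z)$ evaluated at $z = g_c(c)$, and rewrite the left-hand side as $\varphi_c\bigl(g_c(c) + w(c)\bigr)$ via $h_c(g_c(c)) = g_c(c) + w(c)$. Differentiating both sides at $c = a$ and using $w(a) = 0$, $u(a) = 0$, $\varphi_a(z_\ell) = 0$, and $\varphi_a'(z_\ell) = 1$, the chain rule yields, after collecting the terms in $\partial_c\varphi_c(z_\ell)|_{c=a}$ on each side, the relation
\begin{equation*}
w'(a) = (\lambda - 1)\, u'(a).
\end{equation*}
Since $|\lambda| > 1$, the transversality hypothesis $w'(a) \neq 0$ immediately delivers $u'(a) \neq 0$.

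Theorem~\ref{gkdjwerdfl} then gives asymptotic $\lambda$-self-similarity of $M_{p,q}$ about $a$ with limit model $u'(a)^{-1} X(a)$. To match the statement of the theorem, I would telescope the recursion~\eqref{slkjspoipjlaskjdfwedf} via the chain rule applied to $g_c = f_{\ell,c} \circ \cdots \circ f_{1,c}$, obtaining $X(a) = B_\ell(a) = g_a'(a)\, B_0(a)$; the limit model about $a \in M_{p,q}$ is therefore $\mu_a B_0(a)$ with $\mu_a = (\lambda - 1)\,g_a'(a)/w'(a)$, which should coincide with the definition referenced in the theorem. I expect the main obstacle to be the derivative identity $w'(a) = (\lambda-1)u'(a)$: while ultimately a short chain-rule computation, it requires careful bookkeeping of the simultaneous $c$-dependence of $\varphi_c$, $g_c$, $h_c$, and $z_\ell(c)$ so that the contributions of $\partial_c\varphi_c(z_\ell)$ on the two sides match up. The density, the self-similarity of $X(c)$, and the telescoping are all essentially packaged in earlier lemmas and in the proof of Theorem~\ref{gdasoiu}, and should slot in with minimal additional work.
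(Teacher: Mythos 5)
You have effectively re-proved the general Theorem~\ref{gaeiogc} rather than recognizing that the corollary is designed to be a two-line consequence of Theorem~\ref{gaeiogc} together with Theorem~\ref{dfasdfefd} (which supplies transversality for $(p,q)=(4,2)$; your proposal gestures at the transversality hypothesis but never cites the result that actually verifies it in this case). That said, your overall strategy mirrors the paper's proof of Theorem~\ref{gaeiogc}, and one step is genuinely cleaner: you derive $w'(a)=(\lambda(a)-1)u'(a)$ by differentiating the K\oe{}nigs equation $\varphi_c(h_c(g_c(c)))=\lambda(c)\varphi_c(g_c(c))$ at $c=a$, whereas the paper first shows $u'(a)=\beta'(a)-z_\ell'(a)$ and then extracts $w'(a)$ through a limit along an auxiliary sequence $c_n\to a$ on which $\beta(c_n)\neq z_\ell(c_n)$. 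Your functional-equation route collapses two computations into one and avoids the identity-theorem detour; the bookkeeping of $\partial_c\varphi_c$ checks out because the cross-terms cancel exactly as you expect.

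There is, however, a genuine gap in the density step, and it is precisely the one flagged by Remark~\ref{asdfadacbbsdasb}. You set $X(c)=\varphi_c(K_c\cap\overline{V_{c,r}})$ and then assert that repelling periodic points of $\mathbf{f}_a$ are dense in $K_a\cap\overline{V}$. But density of $\mathcal{R}_a$ in $K_a=J_a$ does not yield density of $\mathcal{R}_a\cap\overline{V}$ in $K_a\cap\overline{V}$: a point $\check{w}\in K_a\cap\partial V$ may only be approximated by repelling points lying outside $\overline{V}$, so the corresponding boundary point of $X(a)$ need not be a limit of points admitting holomorphic motions, and hypothesis~(ii) of Theorem~\ref{gkdjwerdfl} fails as stated. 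The paper's fix is to enlarge the model set to $X(c)=\varphi_c\circ g_c(K_c\cap\Omega_c)\cup\partial\mathbb{D}_r$ (Lemma~\ref{gdsdgeweqdfb}); boundary points of $X(a)$ then lie on $\partial\mathbb{D}_r$ and are motioned by constant maps, while interior points $\check{w}\in\Omega_a^{\circ}$ are approximable by interior repelling points (Lemma~\ref{adfasdfabsdabasd}). You also compress the identification $M_u=M_{p,q}\cap W$ into one sentence, whereas in this setting it rests on the full strength of Lemma~\ref{gsdghcxew}(iv), namely that among the $q^{\ell}$ branch-compositions near $a$ only $g_c$ can meet $K_c$; that argument uses the uniform escaping radius and is not simply the ``analogue of~\eqref{lskdjfpoijcavsdfwd} applied to $c$.''
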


In the following lemma (and its proof) we use the same notation as in Lemma~\ref{gfgdgew}, equation \eqref{afsdfoijlkjblsdf} and Remark~\ref{alkjsdfoijcllksdfc}.

\begin{lem}\label{ggvbfheq} The function

$$ \Phi(c,z) = (c, \varphi_c \circ g_c(z))$$ 

\noindent is well-defined and holomorphic on a neighborhood of $(a,a), $ with $\Phi(a,a) = (a,0).$

\end{lem}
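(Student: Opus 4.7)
The plan is to verify the two assertions in turn: first that $\varphi_c \circ g_c$ makes sense on a common neighborhood of $(a,a)$, and second that this composition inherits joint holomorphy from its components. The value at $(a,a)$ is then immediate from the normalizations already built into Lemma~\ref{gfgdgew} and Remark~\ref{alkjsdfoijcllksdfc}.

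First I would invoke Lemma~\ref{gfgdgew} to obtain a polydisk neighborhood $V_a \times V_a$ of $(a,a)$ on which $(c,z) \mapsto g_c(z)$ is jointly holomorphic and univalent in $z$, with $g_a(a) = z_\ell$. Shrinking $V_a$ if necessary and using continuity of $c \mapsto z_\ell(c)$ together with $g_a(a) = z_\ell(a)$, I can ensure that $g_c(z)$ lies inside a prescribed neighborhood of $z_\ell(c)$ for all $(c,z)\in V_a \times V_a$; in particular $g_c(z)$ belongs to $\operatorname{dom}(\varphi_c)$ as described in Remark~\ref{alkjsdfoijcllksdfc}.

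The key step is to show that $(c,z) \mapsto \varphi_c(z)$ is itself jointly holomorphic on a neighborhood of $(a, z_\ell)$ in $\mathbb{C}^2$. Since Lemma~\ref{ghdcsdffewdf} provides joint holomorphy of $(c,z)\mapsto h_c(z)$ together with holomorphy of the fixed point $c \mapsto z_\ell(c)$ and its multiplier $c\mapsto \lambda(c)$, I would realize the K\oe{}nigs coordinate through its classical limit formula
\begin{equation*}
\varphi_c(z) = \lim_{n \to \infty} \lambda(c)^{-n}\bigl(h_c^{\,n}(z) - z_\ell(c)\bigr),
\end{equation*}
which on a small disk around $z_\ell(c)$, uniformly in $c$ near $a$, converges uniformly to a holomorphic function by the standard K\oe{}nigs estimates (the expansion $h_c(z) = z_\ell(c) + \lambda(c)(z - z_\ell(c)) + O((z-z_\ell(c))^2)$ is uniform in $c$). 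A uniform limit of jointly holomorphic functions is jointly holomorphic on the interior, so $(c,z) \mapsto \varphi_c(z)$ is holomorphic on a neighborhood of $(a, z_\ell)$; the normalizations $\varphi_c(z_\ell(c)) = 0$ and $\varphi_c'(z_\ell(c)) = 1$ are preserved in the limit.

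Combining the two observations, the composition $(c,z)\mapsto \varphi_c(g_c(z))$ is holomorphic near $(a,a)$ as a composition of jointly holomorphic maps, so $\Phi(c,z) = (c,\varphi_c(g_c(z)))$ is holomorphic on a neighborhood of $(a,a)$. Finally $\Phi(a,a) = (a,\varphi_a(g_a(a))) = (a,\varphi_a(z_\ell)) = (a,0)$. The main obstacle is the parametric K\oe{}nigs statement in the middle paragraph; everything else is a matter of shrinking neighborhoods and invoking Lemmas~\ref{ghdcsdffewdf} and~\ref{gfgdgew}.
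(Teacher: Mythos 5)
Your overall strategy matches the paper's: the crux is the joint holomorphy of the K\oe{}nigs coordinate $(c,z) \mapsto \varphi_c(z)$, after which the composition and the evaluation at $(a,a)$ are routine, and the paper handles the crux simply by citing Milnor for the fact that $\varphi_c$ depends holomorphically on $c$. You try to prove this directly, which is a reasonable instinct, but the explicit limit formula you invoke is the wrong one for the situation at hand.

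Recall from Lemma~\ref{gfgdgew} and Remark~\ref{alkjsdfoijcllksdfc} that $z_\ell(c)$ is a \emph{repelling} fixed point of $h_c$, with $|\lambda(c)| > 1$. The formula
\[
\varphi_c(z) = \lim_{n \to \infty} \lambda(c)^{-n}\bigl(h_c^{\,n}(z) - z_\ell(c)\bigr)
\]
is the classical K\oe{}nigs formula for an \emph{attracting} fixed point; in the repelling case it is not even well-defined, because the forward orbit $h_c^{\,n}(z)$ escapes every small disk about $z_\ell(c)$ after finitely many steps (except for $z = z_\ell(c)$ itself), so the expression $h_c^{\,n}(z)$ ceases to make sense and there is nothing to converge. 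The correct parametric formula for a repelling fixed point linearizes the inverse branch: letting $h_c^{-1}$ denote the local holomorphic inverse fixing $z_\ell(c)$ (which is attracting with multiplier $\lambda(c)^{-1}$), one takes
\[
\varphi_c(z) = \lim_{n \to \infty} \lambda(c)^{n}\bigl(h_c^{-n}(z) - z_\ell(c)\bigr),
\]
which does converge uniformly on a disk about $z_\ell(c)$, uniformly for $c$ near $a$, and satisfies the same functional equation $\varphi_c \circ h_c = \lambda(c)\,\varphi_c$. With that substitution your argument goes through and gives a complete proof of the parametric K\oe{}nigs statement that the paper only cites; the remaining steps (shrinking $V_a$ so $g_c(z) \in \operatorname{dom}(\varphi_c)$, composing, and evaluating $\Phi(a,a)=(a,\varphi_a(z_\ell))=(a,0)$) are correct as written.
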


\begin{proof} It is well known that the K\oe{}nigs function $\varphi_c$ depends holomorphically on $c$ -- for more details, see  \cite[p. 78]{Milnor}.   Since $g_c$ sends $\xi(c)$ to $z_{\ell}(c)$, with $\xi(a)=a,$ it follows that $\Phi(a, a)= (a, 0),$ and $\Phi$  must be defined on a neighborhood of $(a, a).$ \end{proof}

\begin{lem} \label{gsdghcxew}Let $\Phi,$ $h_c$ and $\mathcal{N}_a$ be as  in Lemma \ref{ggvbfheq}, equation \eqref{afsdfoijlkjblsdf} and Remark \ref{alkjsdfoijcllksdfc}. Then $\Phi$  establishes a diffeomorphism from a neighborhood $\tilde{U}$ of $(a, a)$ onto a neighborhood $\Phi(\tilde{U})$  of $(a, 0).$ Let $W\times \overline{\mathbb{D}}_r$ be a closed neighborhood of $(a,0)$ contained in  $\Phi(\tilde{U})$, where $W\subset \mathcal{N}_a$ is conformal closed disk containing $a$ in its interior. We may choose $W$ and $r>0$ sufficiently small so that: 
\begin{enumerate}[label=(\roman*)]
\item $\Phi(c,c) \in W \times \mathbb{D}_r$, for every $c\in W;$ \item $\Omega = \Phi^{-1}(W\times \overline{\mathbb{D}}_r)$   is compact; \item   for every $c\in W,$ $h_c(z_{\ell}(c))=z_{\ell}(c);$ moreover, the section 
\[\Omega_c= \{ z \in \mathbb{C}: (c,z) \in \Omega\} \]
is conformally isomorphic to $\overline{\mathbb{D}}$ and its interior contains $c$ and $\xi(c)$, where $\xi$ is the same map made explicit in Lemma \ref{gfgdgew};
\item if $c\in W,$ then $\Omega_c$ is contained in the domain of $g_c$ and  \begin{equation}\label{gadgsewer}g_c(K_c \cap \Omega_c) =K_c \cap g_c(\Omega_c).\end{equation}

\end{enumerate} 
\end{lem}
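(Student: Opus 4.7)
The overall plan is to first establish that $\Phi$ is a local biholomorphism at $(a,a)$, then derive the geometric conclusions (i)--(iv) by shrinking $W$ and $r$. For the biholomorphism, I would compute the Jacobian of $\Phi(c,z)=(c,\varphi_c(g_c(z)))$ at $(a,a)$. Since the first component is $c$, the determinant reduces to the partial derivative in $z$ of the second component, evaluated at $(a,a)$, which equals $\varphi_a'(z_\ell)\cdot g_a'(a)$. The normalization in Remark~\ref{alkjsdfoijcllksdfc} gives $\varphi_a'(z_\ell)=1$, and Lemma~\ref{gfgdgew} asserts that $g_a$ is univalent on $V_a$, so $g_a'(a)\neq 0$. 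Hence the Jacobian is nonzero, and the inverse function theorem supplies the required neighborhood $\tilde U$ of $(a,a)$ on which $\Phi$ is a diffeomorphism.

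Properties (i)--(iii) then follow by elementary shrinking. For (i), by continuity of $c\mapsto \Phi(c,c)$ and the fact that $\Phi(a,a)=(a,0)$, any sufficiently small conformal closed disk $W\ni a$ and $r>0$ will keep $\Phi(c,c)$ inside $W\times\mathbb{D}_r$. For (ii), $\Omega=\Phi^{-1}(W\times\overline{\mathbb{D}}_r)$ is compact as the continuous preimage of a compact set by the homeomorphism $\Phi|_{\tilde U}$, provided $W\times\overline{\mathbb{D}}_r\subset\Phi(\tilde U)$. For (iii), the fact $h_c(z_\ell(c))=z_\ell(c)$ is exactly part of Lemma~\ref{ghdcsdffewdf} once $W\subset\mathcal{N}_a$. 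The fiber $\Omega_c$ is precisely $(\varphi_c\circ g_c)^{-1}(\overline{\mathbb{D}}_r)$, and since $\Phi$ is a biholomorphism the map $\varphi_c\circ g_c$ is univalent on $\Omega_c$ and sends $\Omega_c$ conformally onto $\overline{\mathbb{D}}_r$. That $c$ lies in the interior is (i); that $\xi(c)$ lies in the interior follows because $\varphi_c(g_c(\xi(c)))=\varphi_c(z_\ell(c))=0\in\mathbb{D}_r$.

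The heart of the lemma is (iv). For $W$ small enough, $\Omega_c\subset V_a$ so that $g_c$ is univalent on $\Omega_c$; only \eqref{gadgsewer} requires genuine dynamical input. The inclusion $\subset$ is the delicate direction: given $z\in K_c\cap\Omega_c$, I would invoke the stability argument from the proof of Lemma~\ref{gfgdgew}, exactly in the spirit of \eqref{lskdjfpoijcavsdfwd}. Since $z$ is close to $\xi(c)=z_0(c)$, the set $\mathbf{f}_c(z)$ consists of $q$ points close to $\mathbf{f}_c(\xi(c))$, of which only the $f_{1,c}$-image has any chance of lying in $K_c$ (the other $q-1$ perturbations of the escaping images are driven outside $\mathcal{B}_R$ by a bounded number of iterates of $\mathbf{f}_c$). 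If $z\in K_c$ then at least one image must stay in $K_c$, so $f_{1,c}(z)\in K_c$; iterating this argument $\ell$ times (shrinking $W$ and $r$ finitely many times to ensure each intermediate point remains in the stability region of $z_j(c)$), we conclude $g_c(z)=f_{\ell,c}\circ\cdots\circ f_{1,c}(z)\in K_c$. The reverse inclusion $\supset$ is easier: if $w\in K_c\cap g_c(\Omega_c)$, write $w=g_c(z)$ with $z\in\Omega_c$; any bounded forward orbit of $w$ concatenated with the finite segment $z\to f_{1,c}(z)\to\cdots\to g_c(z)=w$ produces a bounded forward orbit of $z$, showing $z\in K_c$.

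The main obstacle is controlling the size of $W$ and $r$ uniformly so that the stability argument for (iv) can be applied $\ell$ times in succession. Concretely, one must ensure that for every $j\in\{0,1,\dots,\ell\}$ the perturbed intermediate point $f_{j,c}\circ\cdots\circ f_{1,c}(z)$ remains in the neighborhood of $z_j(c)$ where the only $\mathbf{f}_c$-image that can belong to $K_c$ is the one dictated by $f_{j+1,c}$. Because $\ell$ is finite and the stability neighborhoods from Lemma~\ref{gfgdgew} are open, a finite intersection of neighborhoods of $a$ works; this justifies a single choice of $W$ and $r$ making all four properties simultaneously valid.
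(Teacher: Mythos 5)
Your proposal is correct and follows essentially the same route as the paper: compute (or invoke) the nonvanishing Jacobian of $\Phi$ at $(a,a)$ to get the diffeomorphism, derive (i)--(iii) by shrinking $W$ and $r$, and for (iv) use the escaping-radius stability argument to show that, among the images of a point near $a$ under $\ell$ iterates of $\mathbf{f}_c$, only the one dictated by $g_c$ can lie in $K_c$. The one organizational difference is that for (iv) the paper introduces at once the full collection $\mathcal{F}_a$ of $q^\ell$ branches of $\mathbf{f}_a^\ell$ on a single small neighborhood $\mathcal{U}_a$ of $a$, uses Theorem~\ref{adsfasdgglkppsdfsdf} together with compactness to drive all $f_a(\mathcal{U}_a)$ with $f_a\neq g_a$ outside $\mathcal{B}_R$ in a uniform number of iterates, and then perturbs the whole finite family simultaneously; you instead iterate the one-step ``only one of the $q$ images survives'' argument $\ell$ times, re-shrinking at each stage. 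The two are mathematically equivalent since $\ell$ is finite, but the paper's all-at-once formulation dispatches the uniformity issue you flag in your final paragraph in a single stroke.
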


\begin{proof} Using the Inverse Function Theorem and 
\[ \Phi(a, a) = (a, \varphi_a \circ g_a(a))= (a, \varphi_a(z_{\ell}))=(a, 0)
\] it is possible to show that $\Phi$ is diffeomorphism from a neighborhood of $(a, a)$ onto a neighborhood of $(a, 0)$. In particular, $\Phi^{-1}$ is well-defined on some \[\Phi(\tilde{U}) \supset W\times \overline{\mathbb{D}}_r\] where $W$ is a conformal closed disk whose interior contains  $a,$ so that   $\Omega=\Phi^{-1}(W\times \overline{\mathbb{D}}_r)$ is compact. This proves (ii). Since $\varphi_c \circ g_c(z)$ depends holomorphically on $(c, z)$ and sends $(a, a)$ to zero, it is natural to assume that $\varphi_c \circ g_c(c)$ lies in $\mathbb{D}_s$, for some positive $s< r$, for all $c\in W$, thereby proving (i). 
Note that 
\begin{equation}\label{llkjhasdfoiqdf}\Omega_c = (\varphi_c\circ g_c)^{-1}(\overline{\mathbb{D}}_r)
\end{equation} for every $c\in W.$ Then $\xi(c)$ corresponds to the inverse image of zero; in particular, $\xi(c)$ is in the interior of $\Omega_c$, for any $c\in W.$ From (i) it follows that $\Phi(c, z) \in W \times \mathbb{D}_r$, for any $c\in W$ and for any perturbation $z$ of $c.$ Hence $c$ is also in the interior of $\Omega_c$, whenever $c\in W.$ From \eqref{llkjhasdfoiqdf} we conclude that $\Omega_c$ is conformally isomorphic to $\overline{\mathbb{D}}.$ This proves (iii).

Now we proceed to the proof of (iv), which is the most delicate step. Recall from \eqref{alkjsokjdkkdf} that $(z_i)_0^{\infty}$ is the only bounded forward orbit of $z_0=a$ under $\mathbf{f}_a$, and that $z_{\ell}$ is the first repelling periodic point in this orbit.  Since $a\neq 0,$ the action of $\mathbf{f}_a$ on a sufficiently small neighborhood of $a$ is determined by $q$ distinct univalent branches with pairwise disjoint images, and only one of them intersects $K_a$, otherwise the Misiurewicz point $a$ would have at least two bounded orbits, which is impossible. By the same argument and using the fact that no iterate  $\mathbf{f}_a^k(0)$ contains zero, one can show that the action of $\mathbf{f}_a$ on a small neighborhood $U_{\zeta}$ of each point $\zeta$ in the image of $a$ under $\mathbf{f}_a$ is determined by $q$ univalent branches $\psi_j$ with pairwise disjoint images $\psi_j(U_\zeta)$, none of which intersects $K_a$ in the case where $\zeta \notin K_a$, and only one image $\psi_s(U_\zeta)$ intersects $K_a$ if $\zeta\in K_a.$  After repeating this argument $\ell$ times we find a set $\mathcal{F}_a$ consisting of $q^{\ell}$ univalent branches  $f_a$ defined on a very small neighborhood $\mathcal{U}_a$ of $a$; the images $f_a(\mathcal{U}_a)$, $f_a\in \mathcal{F}_a,$ are small conformal disks, not necessarily pairwise disjoint, but only one of them intersects $K_a.$  It is clear that any germ of holomorphic branch of $\mathbf{f}_a^{\ell}$ at $a$ is determined by one element of $\mathcal{F}_a.$

Recall from  \eqref{asdfasoiwellkjsdf} that every univalent branch $h$ of $\mathbf{f}_a$ can be perturbed to produce a holomorphic family $(c, z) \mapsto h_c(z)$   such that $h_a =h$ and $h_c$ is a univalent branch of $\mathbf{f}_c$, with $c$ sufficiently close to $a.$  Since any
$f_a$ in $\mathcal{F}_a$ is a composition of univalent branches of $\mathbf{f}_a$, it is possible to show that any  $f_a
\in \mathcal{F}_a$ gives rise to a holomorphic family $(c, z) \mapsto f_c(z)$ defined on a neighborhood $\mathcal{U}_a\times \mathcal{U}_a$ of $(a, a)$ such that, for any $c$ in $\mathcal{U}_a,$ the map  $f_c:\mathcal{U}_a \to \mathbb{C}$ is a univalent branch of $\mathbf{f}_c^{\ell}.$ Since $\mathcal{F}_a$ is finite, we may assume that the domain of every  holomorphic family $(c, z) \mapsto f_c(z)$ is the same set $\mathcal{U}_a\times \mathcal{U}_a.$
 Let $\mathcal{F}_c$ denote the set of all $f_c$ obtained in this way.  $\mathcal{F}_c$ is a perturbation of $\mathcal{F}_a$ in the sense that $c\mapsto f_c(A)$ is a continuous function of the parameter $c\in \mathcal{U}_a$ (Hausdorff topology),  whenever $f_a\in \mathcal{F}_a$ and  $A$ is a nonempty compact subset of  $\mathcal{U}_a.$

It follows from \eqref{alsokkkdfoiwegasc} and    Lemma \ref{gfgdgew}  that $g_a$ sends $a$ to $z_{\ell}$ and $g_c \in \mathcal{F}_c,$ for all $c\in \mathcal{U}_a.$ 
As we have seen, $g_a(\mathcal{U}_a)$ is the only set in $\{ f_a(\mathcal{U}_a): f_a\in \mathcal{F}_a\}$ which intersects $K_a. $   By reducing $\mathcal{U}_a$ if necessary, the perturbed family $\mathcal{F}_c$ satisfies an analogous property: if $c\in \mathcal{U}_a,$ $f_c\in \mathcal{F}_c$ and $f_c(\mathcal{U}_a)$ intersects $K_a$, then $f_c=g_c.$ All other sets $f_c(\mathcal{U}_a),$ $f_c\neq g_c$,  are contained in the basin of infinity $\mathbb{C} {\setminus} K_a.$ The escaping radius is characterized by the condition in equation~\eqref{safadfwdgasdvcqeqwsdfs}. After further shrinking $\mathcal{U}_a$, we choose an escaping radius $R > 0$ that works uniformly for every correspondence $\mathbf{f}_c$ with $c \in \mathcal{U}_a$. In particular, \eqref{safadfwdgasdvcqeqwsdfs} holds for all $c \in \mathcal{U}_a$.

 Let $f_a\neq g_a$ be in $\mathcal{F}_a.$ Then $f_a(\mathcal{U}_a)$ is a small conformal disk contained in $\mathbb{C} {\setminus} K_a$. Fix $z_0$ in $f_a(\mathcal{U}_a).$ By Theorem \ref{adsfasdgglkppsdfsdf}, there exists $k>0$ such that $\mathbf{f}^{k}_a(z_0) \subset \mathbb{C}{\setminus}\mathcal{B}_R$, where $\mathcal{B}_R$ is defined in \eqref{safadfwdgasdvcqeqwsdfs}.  Then $\mathbf{f}_c^{k}(z) \subset \mathbb{C}{\setminus}\mathcal{B}_R$, if $(c, z)$ is sufficiently close to $(a, z_0).$ A priori, $k$ depends on $f_a$, but since we have finitely many maps, we may assume that  $\mathcal{U}_a$ is sufficiently small so that \begin{equation}\label{adfasdfqwkkslkdf}\mathbf{f}_c^{k}(f_c(\mathcal{U}_a)) \subset \mathbb{C}{\setminus}\mathcal{B}_R\end{equation}
 
 \noindent whenever $c\in \mathcal{U}_a$ and $f_c\neq g_c.$  We conclude that \( f_c(\mathcal{U}_a) \subset \mathbb{C} {\setminus} K_c \) for all \( c \in \mathcal{U}_a \), except in the case where \( f_c = g_c \), in which \( g_c(\mathcal{U}_a) \) may intersect \( K_c \).
Suppose that \( c \in \mathcal{U}_a \). If \( z \in K_c \cap \mathcal{U}_a \), then there exists at least one image $w$ of \( z \) under \( \mathbf{f}_c^{\ell} \) that lies in \( K_c \).
 Based on the previous argument,  the only possibility is $w=g_c(z),$ which shows that \begin{equation}\label{asdfallkspodgsg}g_c(K_c \cap \mathcal{U}_a)  \subset K_c \    \  (c\in \mathcal{U}_a).\end{equation}

Recall that $\xi(c)$ belongs to $\Omega_c$ and $\xi(c)$ converges to  $a$ as $c\to a.$  Moreover, for $c\in \mathcal{U}_a$, the diameter of the set $\Omega_c = g_c^{-1}\circ \varphi_c^{-1} (\mathbb{D}_r)$ goes to zero uniformly as $r\to 0.$ We conclude that $\Omega_c$ is contained in $\mathcal{U}_a$, provided $r>0$ is sufficiently small and $c$ is in a small neighborhood $\mathcal{V}_a \subset \mathcal{U}_a$ of $a.$ It follows from \eqref{asdfallkspodgsg} that $$g_c(K_c \cap \Omega_c) \subset g_c(K_c\cap \mathcal{U}_a) \subset K_c $$ whenever $c\in \mathcal{V}_a.$  The reverse inclusion in~\eqref{gadgsewer} follows directly from the backward invariance of \( K_c \) under \( \mathbf{f}_c \).
If we take $W=\mathcal{V}_a$, then \eqref{gadgsewer} holds for every $c\in W.$ 
\end{proof}

\begin{lem} \label{ggsfgccwe} Let $K$ denote the set of all $(c,z)$ in $\mathbb{C}^2$ such that $z\in K_c.$
Then $K$ is closed. 

\end{lem}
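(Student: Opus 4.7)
The plan is to prove sequential closedness. Suppose $(c_n, z_n) \to (c_*, z_*)$ in $\mathbb{C}^2$ with $z_n \in K_{c_n}$ for every $n$; the goal is to produce a bounded forward orbit of $z_*$ under $\mathbf{f}_{c_*}$. First I would invoke the stability of the escaping radius recalled at the start of Section~2 to fix some $R > 0$ that is simultaneously an escaping radius for every $\mathbf{f}_c$ with $c$ in a neighborhood $U$ of $c_*$, and discard finitely many indices so that $c_n \in U$ for all $n$. Since $K_c \subset \mathcal{B}_R$ by \eqref{safadfwdgasdvcqeqwsdfs}, and since a bounded forward orbit starting at a point of $K_{c_n}$ remains in $K_{c_n}$ (and therefore in $\mathcal{B}_R$), the hypothesis $z_n \in K_{c_n}$ furnishes an infinite forward orbit $(w_n^k)_{k \ge 0}$ of $\mathbf{f}_{c_n}$ with $w_n^0 = z_n$ and $w_n^k \in \overline{\mathcal{B}_R}$ for every $k \ge 0$.

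Next, the countable product $(\overline{\mathcal{B}_R})^{\mathbb{N}}$ is a compact metric space in the product topology, so a diagonal extraction yields a subsequence (still indexed by $n$) along which $w_n^k \to w_*^k$ for every $k \ge 0$, with $w_*^0 = z_*$ and $w_*^k \in \overline{\mathcal{B}_R}$. It remains to check that this limit sequence is a forward orbit of $\mathbf{f}_{c_*}$, and this is where the algebraic nature of the correspondence does the work: the relation $w \in \mathbf{f}_c(z)$ is equivalent to $(w-c)^q = z^p$, so the graph
\[
\Gamma \;=\; \{(c,z,w) \in \mathbb{C}^3 : (w-c)^q = z^p\}
\]
is a closed algebraic subset of $\mathbb{C}^3$. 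For each $k$, the point $(c_n, w_n^k, w_n^{k+1})$ lies in $\Gamma$ and converges to $(c_*, w_*^k, w_*^{k+1})$, so the latter is in $\Gamma$, that is, $w_*^{k+1} \in \mathbf{f}_{c_*}(w_*^k)$. Hence $(w_*^k)_{k \ge 0}$ is a bounded forward orbit of $z_*$ under $\mathbf{f}_{c_*}$, and consequently $z_* \in K_{c_*}$.

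There is no substantive obstacle here: the only non-routine ingredient is the uniform choice of the escaping radius across a neighborhood of $c_*$, which is already recorded in the preliminaries. Once this is in hand, the argument reduces to a standard compactness plus closed-graph step, using that $\mathbf{f}_c$ is cut out by a single polynomial equation in $(c,z,w)$.
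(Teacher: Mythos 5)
Your proof is correct, and it takes a genuinely different route from the paper's. The paper argues ``top-down'': it builds the nested family of compact sets $F^{-n}(V_k)$ where $F(c,z)=(c,\mathbf{f}_c(z))$, $V_k = \{(c,z): |c|\le k,\,|z|\le R_c\}$, shows each $F^{-n}(V_k)$ is compact by checking that its complement is open (using the continuity of $(c,z)\mapsto\mathbf{f}_c(z)$ in the Hausdorff topology and the persistence of the escape condition under perturbation), and then identifies the intersection over $n$ with $K\cap(\{|c|\le k\}\times\mathbb{C})$ via equation~\eqref{alhasdfoilkhlkjhasdfc}. You instead argue ``bottom-up'' via sequential closedness: a convergent sequence of pairs $(c_n,z_n)$ with $z_n\in K_{c_n}$, a uniform escaping radius $R$ on a neighborhood of $c_*$, a diagonal extraction in the compact product $(\overline{\mathcal{B}_R})^{\mathbb{N}}$ of the bounded orbits, and then closedness of the algebraic graph $\Gamma=\{(c,z,w):(w-c)^q=z^p\}$ to verify the limit is an orbit of $\mathbf{f}_{c_*}$. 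The ingredients are the same in spirit (escape radius, compactness, continuity of the correspondence relation), but the organization differs, and your use of the explicit polynomial defining relation to see that the graph is closed is a cleaner replacement for the paper's appeal to Hausdorff-continuity of the set-valued map. One small advantage of your version is that it handles the variation of the escaping radius explicitly, which the paper leaves a little implicit in the definition of $V_k$; one small advantage of the paper's version is that it simultaneously produces compactness of the sets $K\cap(\{|c|\le k\}\times\mathbb{C})$, not merely closedness of $K$.
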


\begin{proof}  Consider the multifunction $F(c,z) = (c, \mathbf{f}_c(z))$ defined on $\mathbb{C}^2.$  Let $R_c$ denote an escaping radius of $\mathbf{f}_c$, as in  equation~\eqref{safadfwdgasdvcqeqwsdfs}. Let $V_{k}$ be the set of all $(c,z)$ such that $|c| \leq k$ and $|z|\leq R_c.$

 We will prove that $\{F^{-n}(V_k)\}_n$ is a nested sequence of compact sets whose intersection is $K \cap \{ |c| \leq k\}{\times} \mathbb{C}$.   By induction, 
 \[F^{-n}(V_k) = \{ (c,z): z \in \mathbf{f}_c^{-n}(|w| \leq R_c), \ |c| \leq k\}. \]

Hence the sets are nested, bounded, and (by \eqref{alhasdfoilkhlkjhasdfc}) their intersection  is $K \cap \{ |c| \leq k\}{\times}\mathbb{C}.$ If $|c| \leq k$ and $(c,z)$ is in the complement of $F^{-n}(V_k)$  then every forward orbit 
\[ z \xrightarrow{\mathbf{f}_c} z_1 \xrightarrow{\mathbf{f}_c} z_2 \xrightarrow{\mathbf{f}_c} \cdots \xrightarrow{\mathbf{f}_c} z_n \]

terminates  at point with $|z_n| > R_c.$ Since $(c, z) \mapsto \mathbf{f}_c(z)$ is a continuous multifunction, this inequality  is persistent under small perturbations of the initial point $z$ and the parameter $c.$ Hence the complement of $F^{-n}(V_k)$ is open. Since the closed set $F^{-n}(V_k)$ is bounded, it must be compact.

Since the intersection of $K$ with every strip $|c| \leq k$ is compact, it follows that $K$ is closed.  
\end{proof}

\begin{lem} \label{gdsdgeweqdfb} Let \( \lambda(c) \), \( g_c \), \( \varphi_c \), \( W \), \(\mathbb{D}_r \),  \( K \), and \( \Omega \) be as defined in Remark~\ref{alkjsdfoijcllksdfc} and in Lemmas~\ref{gfgdgew}, \ref{gsdghcxew}, and~\ref{ggsfgccwe}. For every $c\in W$, define 
\begin{equation}\label{gdsdcvscswer}
\begin{split}
Y(c)  = \varphi_c \circ g_c(K_c \cap \Omega_c) & \qquad X(c)  = Y(c) \cup \partial \mathbb{D}_r\\
Y  = \bigcup_{c\in W} \{c\} \times Y(c) \ \  & \qquad \quad \,
X  = \bigcup_{c\in W} \{c\} \times X(c).
\end{split}
\end{equation}
\noindent Then $Y = \Phi(K \cap \Omega)$ is compact and $X$ is closed in $\mathbb{C}^2.$ For every $c\in W,$ both $X(c)$ and $Y(c)$ are $\lambda(c)$-self-similar about zero.

  \end{lem}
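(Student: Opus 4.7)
The plan is to treat the three assertions in turn: (i) identify $Y$ with $\Phi(K\cap\Omega)$ and conclude compactness; (ii) derive closedness of $X$ from (i); (iii) prove the self-similarity of the fibers via the K\oe{}nigs functional equation. For (i), I would unpack the definitions: a pair $(c,z)$ belongs to $K\cap \Omega$ precisely when $c\in W$, $z\in \Omega_c$ and $z\in K_c$, and $\Phi(c,z) = (c,\varphi_c\circ g_c(z))$, so $\Phi$ sweeps out exactly $\bigcup_{c\in W}\{c\}\times Y(c)$. Compactness then follows because $K$ is closed by Lemma \ref{ggsfgccwe}, $\Omega$ is compact by Lemma \ref{gsdghcxew}(ii), and $\Phi$ is holomorphic on a neighborhood of $\Omega$ by Lemma \ref{ggvbfheq}. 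Assertion (ii) is then immediate: writing $X = Y \cup (W\times \partial \mathbb{D}_r)$, the first summand is compact, the second is the product of closed sets in $\mathbb{C}$ (the conformal closed disk $W$ and the circle $\partial \mathbb{D}_r$), so $X$ is closed in $\mathbb{C}^2$.

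The heart of the argument is (iii). I would first use \eqref{gadgsewer} together with $g_c(\Omega_c) = \varphi_c^{-1}(\overline{\mathbb{D}}_r)$ to rewrite
\[
Y(c) \;=\; \varphi_c\bigl(K_c \cap \varphi_c^{-1}(\overline{\mathbb{D}}_r)\bigr) \;\subset\; \overline{\mathbb{D}}_r.
\]
Then the K\oe{}nigs identity $\lambda(c)\varphi_c(z) = \varphi_c(h_c(z))$ from \eqref{lkajlksdfscc} directly translates $\lambda(c)$-scaling on the model side into $h_c$-dynamics on the dynamical side. To prove $\lambda(c) Y(c) \cap \overline{\mathbb{D}}_r \subset Y(c)$, take $w = \lambda(c)\varphi_c(z) = \varphi_c(h_c(z))$ with $z\in K_c\cap \varphi_c^{-1}(\overline{\mathbb{D}}_r)$ and $|w|\le r$; then $h_c(z)\in \varphi_c^{-1}(\overline{\mathbb{D}}_r)$, and the local forward invariance $h_c(K_c\cap U)\subset K_c$ for a small neighborhood $U$ of $z_\ell(c)$ -- already established inside the proof of Theorem \ref{gdasoiu} -- places $w$ back in $Y(c)$. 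For the reverse inclusion I would pull back by the univalent branch of $h_c^{-1}$ near $z_\ell(c)$ and use backward invariance of $K_c$ under $\mathbf{f}_c^{-n}$, which is immediate from \eqref{alhasdfoilkhlkjhasdfc}. These two inclusions yield $(\lambda(c) Y(c))_r = Y(c)_r$, i.e.\ $\lambda(c)$-self-similarity of $Y(c)$ about $0$. Finally, since $|\lambda(c)|>1$, the circle $\lambda(c)\partial \mathbb{D}_r$ lies outside $\overline{\mathbb{D}}_r$, so adjoining $\partial \mathbb{D}_r$ propagates the self-similarity from $Y(c)$ to $X(c)$.

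The main technical obstacle will be choosing $r$ (and the parameter neighborhood $W$) uniformly small enough that, for every $c\in W$, the closed dynamical disk $\varphi_c^{-1}(\overline{\mathbb{D}}_r)$ sits inside a common domain on which both $h_c$ and the relevant branch of $h_c^{-1}$ are defined -- so that the K\oe{}nigs equation applies in both directions -- and so that the local forward invariance $h_c(K_c \cap \varphi_c^{-1}(\overline{\mathbb{D}}_r)) \subset K_c$ extracted from the proof of Theorem \ref{gdasoiu} still holds at this radius. The holomorphic dependence of $\varphi_c$, $h_c$ and $\lambda(c)$ on $c$, together with the compactness of $\Omega$, should make these uniform choices possible; everything else is bookkeeping.
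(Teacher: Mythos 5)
Your proof is correct, and the identification $Y=\Phi(K\cap\Omega)$ and the decomposition $X = Y\cup(W\times\partial\mathbb{D}_r)$ match the paper's. However, for the self-similarity of $Y(c)$ and $X(c)$ you take a genuinely different route from the paper. The paper simply observes that, with the choice $\overline{V}=g_c(\Omega_c)$ (which satisfies $z_\ell(c)\in V$ and $\overline{V}\subset\operatorname{dom}(\varphi_c)$ by Lemma~\ref{gsdghcxew}), the set $Y(c)=\varphi_c(K_c\cap\overline{V})$ is exactly the limit model $B_\ell(c)$ supplied by Theorem~\ref{gdasoiu}, and then invokes the general fact, recorded in Section~\ref{adsflkajlksjdfaoibadadf}, that any limit model of an asymptotically self-similar set is itself $\lambda$-self-similar about $0$. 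You instead re-derive the self-similarity from scratch: after using \eqref{gadgsewer} to rewrite $Y(c)=\varphi_c(K_c\cap\varphi_c^{-1}(\overline{\mathbb{D}}_r))$, you push forward and pull back by $h_c$ through the K\oe{}nigs identity \eqref{lkajlksdfscc} together with local forward and backward invariance of $K_c$. This is exactly the mechanism already used inside the proof of Theorem~\ref{gdasoiu} (equations \eqref{hgdsew} and \eqref{alhasdfoapivasdfsdfc}), so your argument is sound but repeats work the paper has packaged into that theorem. The paper's route is shorter and keeps the K\oe{}nigs bookkeeping confined to one place; yours is more self-contained at the cost of redundancy. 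Both correctly handle the extension to $X(c)$ by noting that $|\lambda(c)|>1$ pushes $\lambda(c)\partial\mathbb{D}_r$ outside $\overline{\mathbb{D}}_r$.
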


\begin{proof}  By Lemmas \ref{ggsfgccwe}  and  \ref{gsdghcxew},  the set $\Phi(K\cap \Omega)$ is closed and is given by all $(c, \varphi_c\circ g_c(z))$ such that $z$ is in $K_c\cap \Omega_c$ and $c\in W.$  Hence $Y=\Phi(K\cap \Omega).$ 
By Theorem \ref{gdasoiu}, $K_c$  is asymptotic $\lambda(c)$-self-similar about $z_{\ell}(c)$  with limit model $\varphi_c(K_c \cap \overline{V}),$ for   any open set $V$ containing $z_{\ell}(c)$ such that $ \overline{V} \subset \operatorname{dom}(\varphi_c).$  By Lemma \ref{gsdghcxew}, we may take $\overline{V}=g_c(\Omega_c)$, for then $\xi(c)$ is in the interior of $\Omega_c$ and therefore $z_{\ell}(c) = g_c(\xi(c))$ is in the interior of $g_c(\Omega_c).$ Hence $Y(c)$ is a limit model about $z_{\ell}(c)\in K_c$, and as a result, it must be  $\lambda(c)$-self-similar about zero.  The same  is true for $X(c).$ It is easy to show that $X$ is closed. 
\end{proof}

\begin{lem} \label{djeowek} Let \(g_c\), \(\varphi_c\), \(\Omega\), \(W\), \( X \) and \(M_{p,q}\) be as in  Lemma \ref{gdsdgeweqdfb} and Definition \ref{lkjasdfasdfqefd}. For any $c\in W$, $g_c(c)$ is in the domain of $\varphi_c$ and  \[u(c) = \varphi_c(g_c(c))\] defines a holomorphic function on $W.$ Let $M_{u}$ be the set of all $c \in W$ such that $u(c) \in X(c).$ Then \begin{equation} \label{gdadgeeddsqe} M_{u} = M_{p,q} \cap W.\end{equation} 

\end{lem}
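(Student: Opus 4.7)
The lemma packages three claims: (a) for every $c\in W$, the point $g_c(c)$ lies in $\operatorname{dom}(\varphi_c)$; (b) the resulting $u(c)=\varphi_c(g_c(c))$ is holomorphic on $W$; and (c) $M_u=M_{p,q}\cap W$. Claims (a) and (b) are immediate from the preceding lemmas: clause (i) of Lemma~\ref{gsdghcxew} asserts $\Phi(c,c)=(c,\varphi_c(g_c(c)))\in W\times\mathbb{D}_r$, which simultaneously gives (a) and the strict bound $u(c)\in\mathbb{D}_r$, while Lemma~\ref{ggvbfheq} shows that $(c,z)\mapsto\varphi_c(g_c(z))$ is jointly holomorphic near $(a,a)$, so its restriction to the diagonal $z=c$ is holomorphic in $c$.

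The substance is claim (c), which I would establish by a short chain of equivalences. Since $u(c)\in\mathbb{D}_r$ strictly, the membership $u(c)\in X(c)=Y(c)\cup\partial\mathbb{D}_r$ collapses to $u(c)\in Y(c)=(\varphi_c\circ g_c)(K_c\cap\Omega_c)$. Now $\varphi_c\circ g_c$ is univalent on $\Omega_c$ (as $g_c$ is univalent by Lemma~\ref{gfgdgew} and $\varphi_c$ is a K\oe{}nigs coordinate), and clause (iii) of Lemma~\ref{gsdghcxew} places $c$ in the interior of $\Omega_c$. Injectivity therefore yields
\[
u(c)\in Y(c)\iff c\in K_c\cap\Omega_c\iff c\in K_c.
\]

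The final step is the critical observation that $0$ is the critical point of $\mathbf{f}_c$ with $\mathbf{f}_c(0)=\{c\}$, so every forward orbit of $0$ must begin $0\mapsto c\mapsto\cdots$; hence $0\in K_c$ iff $c\in K_c$, which is exactly $c\in M_{p,q}\iff c\in K_c$ by Definition~\ref{lkjasdfasdfqefd}. Concatenating with the previous equivalences gives $c\in M_u\iff c\in M_{p,q}$ for every $c\in W$, and since $M_u\subseteq W$ by definition, this is the desired equality. There is no real obstacle here — the lemma is a bookkeeping statement that ties the pieces together — and the only conceptually non-routine ingredient is the critical identity $\mathbf{f}_c(0)=\{c\}$, which is what converts the condition that the critical orbit stays bounded into the condition $c\in K_c$ tested by $u$.
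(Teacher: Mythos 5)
Your proof is correct and follows essentially the same route as the paper's: both reduce $u(c)\in X(c)$ to $u(c)\in Y(c)$ via $u(c)\in\mathbb{D}_r$, then use the univalence of $\varphi_c$ and $g_c$ together with $c\in\Omega_c$ to get $c\in K_c$, and finally identify this with $c\in M_{p,q}$. The one small added value is that you spell out the identity $\mathbf{f}_c(0)=\{c\}$, which the paper silently invokes when equating $\{c\in W:c\in K_c\}$ with $M_{p,q}\cap W$.
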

\begin{proof}  Due to Lemma \ref{gsdghcxew}, $\Phi(c,c) =(c, u(c))$ is well-defined and belongs to $W\times \mathbb{D}_r$, for every $c$ in $W$. Hence  $u$ is a holomorphic function $W \to \mathbb{D}_r.$ In particular, $u(c) \notin \partial \mathbb{D}_r$ as $c\in W,$ so that $M_u$ is the set of all $c\in W$ such that $\varphi_c(g_c(c)) \in \varphi_c (g_c(K_c \cap \Omega_c)).$ Since $\varphi_c$ is univalent, $M_u$   must be contained in $\{c\in W: g_c(c) \in g_c(K_c\cap \Omega_c)\}.$  Since $g_c$ is univalent and $c\in \Omega_c$ as $c\in W$ (see Lemma \ref{gsdghcxew}), this implies
$$M_u \subset \{c\in W: c \in K_c \cap \Omega_c \} = \{c\in W : c\in K_c \} = M_{p,q} \cap W. $$ A similar argument in the reverse direction shows that \( M_{p,q} \cap W \subset M_u \). The proof is complete.
\end{proof}

\begin{lem}\label{adfasdfabsdabasd} Let $a$ be a Misiurewicz parameter for the family \eqref{lkjdllkjhalsoed}. Let $X(c)$ be as in Lemma \ref{gdsdgeweqdfb}. There exists a dense subset $X'(a) \subset X(a)$ such that, for each $x\in X'(a)$, there exists a holomorphic function $\zeta_x$ on a neighborhood $V_x$ of $a$, with $\zeta_x(a) =x$ and $\zeta_x(c) \in X(c),$ for every $c\in V_x.$ 

\end{lem}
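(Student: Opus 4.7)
The plan is to split $X(a) = Y(a) \cup \partial \mathbb{D}_r$ and treat the two pieces separately. The circle $\partial \mathbb{D}_r$ is contained in $X(c)$ for every $c \in W$ by \eqref{gdsdcvscswer}, so for each $x \in \partial \mathbb{D}_r$ the constant function $\zeta_x(c) \equiv x$ is a holomorphic motion that remains in $X(c)$. All of $\partial \mathbb{D}_r$ can therefore be placed in $X'(a)$ for free, and it only remains to produce a dense subset of the interior piece $Y(a) \cap \mathbb{D}_r = \varphi_a \circ g_a(K_a \cap \operatorname{int}(\Omega_a))$; the image $\varphi_a \circ g_a(K_a \cap \partial \Omega_a)$ already lies inside $\partial \mathbb{D}_r$ because $\Omega_a = (\varphi_a\circ g_a)^{-1}(\overline{\mathbb{D}}_r)$.

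For this interior piece I would invoke the Misiurewicz hypothesis together with Theorem D of \cite{siqueira2025suhyperbolic}, which gives $K_a = J_a$. Since $J_a$ is by Definition \ref{adfadweerdccgqwe} the closure of the repelling cycles of $\mathbf{f}_a$, the repelling periodic points are dense in $K_a$. Let $R$ denote the set of repelling periodic points of $\mathbf{f}_a$ lying in $\operatorname{int}(\Omega_a)$. For any $y = \varphi_a(g_a(z)) \in Y(a) \cap \mathbb{D}_r$ we have $z \in K_a \cap \operatorname{int}(\Omega_a)$, and by density of repelling periodic points in $K_a$ combined with openness of $\operatorname{int}(\Omega_a)$, there is a sequence in $R$ converging to $z$. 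Applying the homeomorphism $\varphi_a \circ g_a$ (univalent on $\Omega_a$ by Lemmas \ref{gfgdgew} and \ref{gsdghcxew}), the set $\{\varphi_a(g_a(z')) : z' \in R\}$ is dense in $Y(a) \cap \mathbb{D}_r$.

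For each $z \in R$, Lemma \ref{ghdcsdffewdf} provides a holomorphic motion $c \mapsto \tilde{z}(c)$ on a neighborhood $V_z$ of $a$ with $\tilde{z}(a) = z$, such that $\tilde{z}(c)$ remains a repelling periodic point of $\mathbf{f}_c$; in particular $\tilde{z}(c) \in J_c \subset K_c$. Since $z$ lies in the open set $\operatorname{int}(\Omega_a)$ and $\Omega_c$ varies continuously with $c$ (being the $c$-fiber of the compact set $\Omega = \Phi^{-1}(W \times \overline{\mathbb{D}}_r)$ from Lemma \ref{gsdghcxew}), we may shrink $V_z$ so that $\tilde{z}(c) \in \operatorname{int}(\Omega_c)$ for every $c \in V_z$. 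Setting
\[
\zeta_x(c) = \varphi_c(g_c(\tilde{z}(c))), \qquad x = \varphi_a(g_a(z)),
\]
yields a holomorphic function on $V_z$ with $\zeta_x(a) = x$ and $\zeta_x(c) \in \varphi_c\circ g_c(K_c \cap \Omega_c) = Y(c) \subset X(c)$. Collecting these motions together with the constant motions on $\partial \mathbb{D}_r$ gives the desired dense set $X'(a)$.

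The only delicate point is verifying that the motion $\tilde{z}(c)$ remains inside $\Omega_c$; this is where the diffeomorphism property of $\Phi$ in Lemma \ref{gsdghcxew} is indispensable, since it forces the family of domains $\Omega_c$ to depend continuously on the parameter. Everything else is a direct combination of the density of repelling cycles — which is precisely what $K_a = J_a$ delivers at a Misiurewicz parameter — and the holomorphic motion of repelling cycles already established in Lemma \ref{ghdcsdffewdf}.
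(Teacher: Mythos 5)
Your proposal is correct and follows essentially the same route as the paper: take $X'(a)$ to be the image under $\varphi_a\circ g_a$ of the repelling periodic points in $\Omega_a^{\circ}$ together with $\partial\mathbb{D}_r$, use $K_a=J_a$ (Theorem D of \cite{siqueira2025suhyperbolic}) for density, Lemma~\ref{ghdcsdffewdf} for the holomorphic motion of each repelling cycle, and constant maps on the circle. The only slight difference is that you spell out more explicitly why $\tilde z(c)$ stays in $\Omega_c^{\circ}$ for $c$ near $a$ via Hausdorff continuity of $c\mapsto\Omega_c$, a point the paper records but treats more briefly.
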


\begin{proof}
Let \( \mathcal{R}_a \) denote the set of all repelling periodic points of \( \mathbf{f}_a \). By Theorem \ref{fsdfawedsfasdeed}, this set is dense in \( J_a = K_a \). Recall from Lemma \ref{gsdghcxew} the main properties of \( \Omega_c \), and let \( \Omega_c^{\circ} \) denote its interior. Define
\[
X'(a) = \varphi_a \circ g_a\left( \mathcal{R}_a \cap \Omega_a^{\circ} \right) \cup \partial \mathbb{D}_r.
\]

\noindent We claim that the closure of \( X'(a) \) is \( X(a) \). Indeed, since \( X(a) \) is closed and contains \( X'(a) \), it follows that \( X(a) \supset \overline{X'(a)} \). Conversely, we show that every point \( \check{z} \in X(a) \) belongs to \( \overline{X'(a)} \). Without loss of generality, assume \( \check{z} \notin \partial \mathbb{D}_r \). Then \( \check{z} = \varphi_a \circ g_a(\check{w}) \), for some \( \check{w} \in K_a \cap \Omega_a \). Note that \( \check{w} \notin \partial \Omega_a \), since otherwise \( \check{z} \in \partial \mathbb{D}_r = \varphi_a \circ g_a(\partial \Omega_a) \). Therefore, \( \check{w} \in \Omega_a^{\circ} \).

Since \( \mathcal{R}_a \) is dense in \( K_a \), there exists a sequence \( w_j \in \mathcal{R}_a \) converging to \( \check{w} \). As \( \check{w} \in \Omega_a^{\circ} \), it follows that all but finitely many \( w_j \) lie in the interior of \( \Omega_a \). We then have
\[
z_j = \varphi_a \circ g_a(w_j) \to \check{z},
\]
with \( z_j \in X'(a) \), hence \( \check{z} \in \overline{X'(a)} \). This proves that \( X(a) = \overline{X'(a)} \). (See Remark \ref{asdfadacbbsdasb}.)

From \eqref{llkjhasdfoiqdf}, we know that \( c \mapsto \Omega_c \) is continuous at \( a \) in the Hausdorff topology. For every \( x \in X'(a) {\setminus} \partial \mathbb{D}_r \), there exists a unique repelling periodic point \( z_j \in \mathcal{R}_a \cap \Omega_a^{\circ} \) such that \( \varphi_a \circ g_a(z_j) = x \).

By Lemma \ref{ghdcsdffewdf}, the point \( z_j \) gives rise to a unique holomorphic function \( c \mapsto \tilde{z}_j(c) \) defined on a neighborhood of \( a \), with \( \tilde{z}_j(a) = z_j \). Define
\[
\zeta_x(c) = \varphi_c \circ g_c(\tilde{z}_j(c)).
\]
Then \( \zeta_x(c) \) is well-defined for every \( c \) near \( a \), and satisfies the desired properties.
If \( x \in \partial \mathbb{D}_r \), the situation is simpler: in this case, \( \zeta_x \) is simply a constant map.
\end{proof}

\begin{remark}\label{asdfadacbbsdasb} \normalfont Without further analysis of the structure of the Julia set \( K_a \) of the correspondence, it is not possible to show that \( \mathcal{R}_a \cap \Omega_a^{\circ} \) is dense in \( J_a \cap \Omega_a \).
For instance, if \( J_a \) is the union of the unit circle and one of its diameters, and \( \Omega_a \) is the closed unit disk (recall that \( \Omega_a \) is always a conformal closed disk intersecting the Julia set), then it is clear that the closure of \( \mathcal{R}_a \cap \Omega_a^{\circ} \) is not equal to \( J_a \cap \Omega_a \), since one set contains only a line segment while the other contains a full circle.
This example shows that, in the statement of Lemma \ref{adfasdfabsdabasd}, we cannot replace \( X(c) \) with the alternative natural candidate \( Y(c) \), which is introduced in Lemma \ref{gdsdgeweqdfb}. 
\end{remark}

We are in a position to prove Theorem \ref{gaeiogc}.

\begin{proof}[Proof of Theorem \ref{gaeiogc}] 
We will apply Theorem  \ref{gkdjwerdfl} for $u(c)=\varphi_c(g_c(c))$, which is defined on an open set $U\supset W,$ as in Lemma \ref{gsdghcxew}.    According to the previous lemmas, every hypothesis of Theorem \ref{gkdjwerdfl}  has already been proved except $u'(a)\neq 0.$

Let  $z_{\ell}$, $z_{\ell}(c)$, $g_c,$ $\varphi_c$ and $h_c$ be as in  \eqref{alkjsokjdkkdf},  Lemma \ref{gfgdgew}, Remark \ref{alkjsdfoijcllksdfc} and  \eqref{afsdfoijlkjblsdf}.  Define $\beta(c) = g_c(c)$ and $$w(c) = h_c(g_c(c)) - g_c(c),$$ for every $c\in W.$  Then $u(c)= F(c, \beta(c))$ where $F(c,z)=\varphi_c(z).$ (The partial derivatives of $F$ are going to be denoted by $F_c$ and $F_z.$) 
Since $F_z(a, z_{\ell}(a)) = \varphi_a'(z_{\ell})=1,$  
\begin{equation} \label{gdgsdocv} u'(a) = F_c(a, z_{\ell}) + F_z(a, z_{\ell}) \cdot \beta'(a) = F_c(a, z_{\ell}) + \beta'(a) . \end{equation}

\noindent By Remark \ref{alkjsdfoijcllksdfc} and Lemma \ref{gsdghcxew},  $F(c, z_{\ell}(c)) = 0$ and $h_c(z_{\ell}(c)) =z_{\ell}(c)$, for every $c\in W.$ Moreover,
\begin{equation}
 \begin{split}
F_c(a, z_{\ell}) &= \lim_{c \to a} \frac{F(c, z_{\ell}) - F(a, z_{\ell})}{c-a} \\
 & = \lim_{c\to a} \frac{F(c, z_{\ell}) - F(a, z_{\ell})}{c-a}  + \frac{F(a, z_{\ell}) - F(c, z_{\ell}(c))}{c-a}\\
 & = F_c(a, z_{\ell}) - (F_c(a, z_{\ell})  + F_z(a, z_{\ell}) \cdot z_{\ell}'(a))\\
 & = -F_z(a, z_{\ell}) \cdot z_{\ell}'(a)=-z_{\ell}'(a). 
 \end{split}
 \end{equation}

By \eqref{gdgsdocv}, $u'(a) = \beta'(a) - z_{\ell}'(a).$ 

Using the identity theorem we conclude that there exists a sequence $c_n \to a$ such that $\beta(c_n) \neq z_{\ell}(c_n)$, otherwise we would have $z_{\ell}(c) = \beta(c)$ for all  $c$ and, therefore, $w=0$ on $W$ (since $W$ is connected). Notice that $w'(a)\neq 0$, since the transversality holds by hypothesis.

From $w(c) = h_c(\beta(c)) - h_c(z_{\ell}(c)) + z_{\ell}(c) - \beta(c)$  and $h_a'(z_\ell)=\lambda(a)$ we have
\begin{equation}
 \begin{split}
 w'(a) &=\lim_{n\to \infty} \frac{w(c_n) - w(a)}{c_n-a} \\ &= \lim_{n\to \infty} \frac{\beta(c_n) - z_{\ell}(c_n)}{c_n -a} \left (\frac{h_{c_n}(\beta(c_n)) - h_{c_n}(z_{\ell}(c_n))}{\beta(c_n) - z_{\ell}(c_n)} -1 \right)  \\ &= (\beta'(a) - z_{\ell}'(a)) (\lambda(a) -1) = u'(a) (\lambda(a) -1).
 \end{split}
 \end{equation} 
 
To prove the third equality, we decompose $h_{c_n}$ as $L_{c_n} + G_{c_n}$, where $L_{c_n}$ denotes the linear part of the Taylor expansion at $a$, and $G_{c_n}$ is the remainder term satisfying $G_{c_n}(a) = 0$ and $G_{c_n}'(a) = 0$. Since the map $(c, z) \mapsto h_c(z)$ is holomorphic, the Weierstrass theorem on uniform convergence implies that $h_{c_n}' \to h_a'$ uniformly on compact subsets. Applying the mean value inequality to $G_{c_n}$ then shows that the quotient involving $h_{c_n}$ converges to $\lambda(a) = h_a'(z_{\ell})$.

We conclude that \[u'(a) = \frac{w'(a)}{(\lambda(a) -1) }\neq 0.\]

By Lemma \ref{djeowek} and Theorem \ref{gkdjwerdfl}, $M_{u} = M_{p, q} \cap W$ is asymptotic $\lambda(c)$-self-similar about $a$ and the limit model is $X(a)/u'(a).$  Recall from Theorem \ref{gdasoiu} that $K_a$ is asymptotic $\lambda(a)$-self-similar about $a$ and that, for any open set $V\subset \operatorname{dom}(\varphi_a)$ with $z_{\ell}\in V,$ the limit model about $a\in K_a$ is
\[B_{0}(a)= B_{\ell}(a)/g_a'(a)=\varphi_a(K_a \cap \overline{V})/g_a'(a).\]  
Using \eqref{gadgsewer} we conclude that $X(a)$ and $\varphi_a(K_a\cap \overline{V})$ coincide on a neighborhood of zero; thus $X(a)/g_a'(a)$ is the limit model of $K_a$ about $a.$ In the statement of  Theorem \ref{gaeiogc} we must
 take \begin{equation}\label{lkslkjsdfoijwlkjjjkdfwd}\mu_a = g_a'(a)/ u'(a).
 \end{equation} The proof is complete.
\end{proof}

\section{Proof of the transversality} \label{jgc}

The proof of Theorem~\ref{dfasdfefd} (Transversality) will be presented after a sequence of lemmas. Before that, we recall some well-known properties of valuations.

\subsection{Valuations.} \label{secA1}

For any field $K$, a valuation on $K$ is a function 
$v: K \to \mathbb{Z}\cup \{\infty \} $ satisfying the following properties:

\begin{enumerate}[label=(\roman*)]
 \item   $v(ab)=v(a) +v(b)$; \item   $v(a) =\infty$ if, and only if, $a=0;$  
\item $v(a +b) \geq \min \{v(a), \ v(b) \}$ and
\item $v(a +b) = \min \{v(a), \ v(b) \}$ provided $v(a)\neq v(b).$

\end{enumerate}

If $L/K$ is a field extension for which every element of $L$ is a root of some polynomial with coefficients in $K$, then we say that $L/K$ is an algebraic extension. The following result is well known:

\begin{thm} \label{gsdge} If $K$ is field with valuation $v$ and $L$ is an algebraic extension of $K$, then there exists an extension of $v$ to a valuation on $L.$ 

\end{thm}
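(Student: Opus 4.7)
The plan is to handle first the finite case and then bootstrap to the general algebraic case via Zorn's lemma.

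For the finite case, suppose $[L:K] = n < \infty$ and define
\[
w(\alpha) \;=\; \frac{1}{n}\, v\!\bigl(N_{L/K}(\alpha)\bigr), \qquad \alpha \in L^{*}, \qquad w(0)=\infty,
\]
where $N_{L/K}$ is the field norm. Three properties must be verified. That $w$ extends $v$ follows from $N_{L/K}(a) = a^{n}$ for $a \in K$; that $w$ is additive on products follows from multiplicativity of the norm. (If one insists on an integer-valued function as in Section \ref{secA1}, one may work with $v \circ N_{L/K}$ at the cost of rescaling $v$ on $K$ by $n$; the essential content is unchanged.)

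The whole difficulty is the ultrametric inequality, and the key reduction is the following lemma:
\[
w(\alpha) \geq 0 \iff \alpha \text{ is integral over } \mathcal{O}_{v}=\{x\in K : v(x)\geq 0\}.
\]
Granting this, the ultrametric inequality falls out easily. Indeed, suppose $w(\alpha)\leq w(\beta)$ and set $\gamma = \beta/\alpha$, so $w(\gamma)\geq 0$; by the lemma $\gamma$ is integral over $\mathcal{O}_v$, hence so is $1+\gamma$ (the integral closure is a ring), hence $w(1+\gamma)\geq 0$, and therefore
\[
w(\alpha+\beta) \;=\; w(\alpha) + w(1+\gamma) \;\geq\; w(\alpha) \;=\; \min\{w(\alpha), w(\beta)\}.
\]
Strict equality when $w(\alpha)\neq w(\beta)$ can then be read off from the usual argument (otherwise one would get a contradiction by adding $-\beta$).

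One direction of the lemma is routine: if $\alpha$ is integral with minimal polynomial $x^{m}+a_{m-1}x^{m-1}+\cdots+a_{0}\in \mathcal{O}_v[x]$, then $N_{L/K}(\alpha) = \bigl((-1)^{m}a_{0}\bigr)^{n/m}$, whence $v\bigl(N_{L/K}(\alpha)\bigr) \geq 0$. The converse is the main obstacle I expect. The cleanest route is via the integral closure $\mathcal{O}_{v}'$ of $\mathcal{O}_{v}$ in $L$, combined with the fact that $\mathcal{O}_{v}$ is already integrally closed in $K$ (every valuation ring is integrally closed): pick a prime $\mathfrak{p}'$ of $\mathcal{O}_{v}'$ lying over the maximal ideal of $\mathcal{O}_{v}$ (lying-over is available because $\mathcal{O}_{v}' \supseteq \mathcal{O}_{v}$ is an integral extension), and show that the localization $\mathcal{O}_{v,\mathfrak{p}'}'$ is a valuation ring of $L$ whose associated valuation is, up to the normalization factor, exactly $w$. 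Here Chevalley's extension lemma can be invoked to guarantee that a prime ideal of an intermediate ring can always be realized as the intersection with a valuation ring of the larger field, which handles the inseparable subtleties uniformly.

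Finally, for an arbitrary algebraic extension $L/K$, apply Zorn's lemma to the poset
\[
\mathcal{S} = \bigl\{\,(M, w_{M}) : K\subseteq M \subseteq L \text{ a subfield,}\ w_{M}\text{ a valuation on }M\text{ extending }v\,\bigr\},
\]
ordered by $(M_{1},w_{1})\leq (M_{2},w_{2})$ iff $M_{1}\subseteq M_{2}$ and $w_{2}|_{M_{1}}=w_{1}$. Any chain has an upper bound given by the union of its fields equipped with the union of its valuations. A maximal element $(M_{0},w_{0})$ must satisfy $M_{0}=L$, for if there existed $\alpha\in L\setminus M_{0}$, the finite case applied to the finite extension $M_{0}(\alpha)/M_{0}$ would strictly enlarge $(M_{0},w_{0})$, contradicting maximality.
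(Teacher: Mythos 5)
The center of your finite-degree argument --- that $w=\tfrac{1}{n}\,v\circ N_{L/K}$ is a valuation, via the lemma ``$w(\alpha)\geq 0$ iff $\alpha$ is integral over $\mathcal{O}_v$'' --- is false whenever $v$ admits more than one extension to $L$, which is the generic situation when $K$ is not complete. Take $K=\mathbb{Q}$, $v=v_5$, $L=\mathbb{Q}(i)$, so $n=2$ and $5=(2+i)(2-i)$. For $\alpha=2+i$ and $\beta=2-i$ one has $N_{L/K}(\alpha)=N_{L/K}(\beta)=5$, hence $w(\alpha)=w(\beta)=\tfrac12$, yet $w(\alpha+\beta)=w(4)=\tfrac12\,v_5(16)=0<\tfrac12$, so the ultrametric inequality fails. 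For the lemma, $\gamma=(2+i)/(2-i)=(3+4i)/5$ has $N_{L/K}(\gamma)=1$, hence $w(\gamma)=0\geq 0$, yet its minimal polynomial is $x^2-\tfrac65 x+1$, whose middle coefficient has $v_5=-1$; since $\mathbb{Z}_{(5)}$ is integrally closed, $\gamma$ is not integral over it. Conceptually, if $w_1,\dots,w_g$ are the extensions of $v$ to $L$ with local degrees $n_i=[L_i:\hat K]$, then $\tfrac1n v(N_{L/K}(\alpha))=\sum_i \tfrac{n_i}{n}\,w_i(\alpha)$, and a weighted average of valuations is not a valuation.

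The fix is to drop the norm formula and run directly on what you invoke only in passing: form the integral closure $\mathcal{O}_v'$ of $\mathcal{O}_v$ in $L$, use lying-over to pick a prime $\mathfrak{p}'$ of $\mathcal{O}_v'$ above $\mathfrak{m}_v$, and show that the localization $(\mathcal{O}_v')_{\mathfrak{p}'}$ is a valuation ring of $L$ dominating $\mathcal{O}_v$; its valuation extends $v$ after normalizing the value group. Alternatively, first complete $K$ --- over $\hat K$ the extension to any finite extension is unique and the norm formula is then legitimate --- and restrict back to $L$. Your Zorn's lemma passage from finite to arbitrary algebraic extensions is correct and can be kept verbatim. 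A minor further point: the extension of $v_2$ to $\overline{\mathbb{Q}}$ has value group $\mathbb{Q}$, not $\mathbb{Z}$, so strictly speaking one leaves the class of $\mathbb{Z}$-valued valuations as defined in Section~\ref{secA1}; this is harmless for the paper's use of the theorem, which relies only on multiplicativity and the ultrametric inequality.
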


A field $K$ is \emph{algebraically closed} if every $a \in K$ is a root of some  non-constant polynomial with coefficients in $K.$
Every field $K$ has an algebraic extension $L$ which is algebraically closed. Since every such extension is unique up to an isomorphism fixing the elements of $K$, $L$ is called \emph{the algebraic closure} of $K.$

A complex number $z$ which is a root of some monic polynomial with coefficients in $\mathbb{Z}$ is an \emph{algebraic integer.}  
We will define the $2$-adic valuation $v_{2}$ on $\mathbb{Q}.$ If $a$ is a non-zero integer, then $v_2(a)$ is the greatest $k \geq 0$ such that $2^k$ divides $a.$ If $a=0$, then we set $v_2(0)=\infty.$ If $r$ is a rational number and $r=p/q$, then $ v_2(p) - v_2(q)$ does not depend on the particular choice of $p$ and $q$. This difference is, by definition, the value of $v_2(r).$ It is possible to check that $v_2: \mathbb{Q} \to \mathbb{Z} \cup \{ \infty\}$ is a valuation, known as the \emph{$2$-adic valuation.}  By Theorem \ref{gsdge}, there exists an extension of $v_2$ to the algebraic closure $\overline{\mathbb{Q}}.$

 \begin{thm} If $z$ is an algebraic integer, then $v_2(z) \geq 0.$
 
 \end{thm}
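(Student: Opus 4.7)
The plan is to argue by contradiction using the defining monic polynomial of $z$ together with the strict non-archimedean property (iv) of the valuation.

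Since $z$ is an algebraic integer, there exist integers $a_0, a_1, \ldots, a_{n-1}$ and a monic polynomial
\[
P(x) = x^n + a_{n-1} x^{n-1} + \cdots + a_1 x + a_0
\]
with $P(z) = 0$. Suppose for contradiction that $v_2(z) < 0$. The key observation is to compare the $2$-adic values of the various terms $a_i z^i$ appearing in $P(z)$. Because each $a_i \in \mathbb{Z}$, property (i) combined with the fact that $v_2(a) \geq 0$ for every nonzero integer $a$ yields
\[
v_2(a_i z^i) = v_2(a_i) + i\, v_2(z) \geq i\, v_2(z),
\]
for $i = 0, \ldots, n-1$, whereas the leading term satisfies $v_2(z^n) = n\, v_2(z)$.

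Here is where the assumption $v_2(z) < 0$ does the work. Since $v_2(z)$ is negative, multiplying by a smaller exponent produces a strictly larger value: for every $i < n$,
\[
i\, v_2(z) > n\, v_2(z),
\]
and consequently $v_2(a_i z^i) > v_2(z^n)$. The minimum of all these $2$-adic values is therefore attained uniquely at the leading term, and applying property (iv) repeatedly to the sum $P(z) = z^n + \sum_{i < n} a_i z^i$ gives
\[
v_2(P(z)) = v_2(z^n) = n\, v_2(z) < 0.
\]
But $P(z) = 0$ forces $v_2(P(z)) = \infty$ by property (ii), a contradiction. Hence $v_2(z) \geq 0$, as claimed.

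The step I expect to be the only technical subtlety is the strict inequality $i\, v_2(z) > n\, v_2(z)$ for $i < n$; it requires the sign of $v_2(z)$ to be exploited, and it is precisely what allows property (iv) (rather than the weaker property (iii)) to be applied to conclude that the valuation of the sum equals that of its leading term. Everything else is bookkeeping with the axioms of valuations and the fact that the extension of $v_2$ from $\mathbb{Q}$ to $\overline{\mathbb{Q}}$ guaranteed by Theorem~\ref{gsdge} is still nonnegative on $\mathbb{Z}$.
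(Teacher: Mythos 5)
Your proof is correct, but it routes through property (iv) by contradiction, whereas the paper gives a short direct argument using only property (iii). The paper rewrites the monic relation as $z^n = a_{n-1}z^{n-1} + \cdots + a_0$, applies the non-archimedean inequality to get $nv(z) \ge \min_j v(a_j z^j) = kv(z) + v(a_k)$ for some $k < n$, and then uses $v(a_k) \ge 0$ to conclude $(n-k)v(z) \ge 0$, hence $v(z) \ge 0$. You instead suppose $v(z) < 0$, observe that the leading term then has strictly smaller value than every other term, invoke property (iv) to pin down $v(P(z)) = nv(z) < 0$, and contradict $P(z) = 0$. The two proofs exploit the same fact (monicity forces the top term to control the valuation when $v(z)$ is negative), but the paper's direct version is slightly more economical: it avoids contradiction, never needs the strict form of the ultrametric inequality, and does not have to separately argue that the minimum is attained uniquely at the leading term. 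Your version has the pedagogical advantage of making clear where strictness of the inequality enters, but otherwise buys nothing extra; both are valid. One small point worth handling explicitly in either version is the degenerate case where all $a_j = 0$: then $z = 0$ and $v_2(0) = \infty \ge 0$ trivially, and this lets you assume the minimum over lower-order terms is attained at a nonzero coefficient.
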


\begin{proof}  Let $v=v_2.$ Since $z$ is an algebraic integer, we have $z^n = a_{n-1} z^{n-1} + \cdots + a_1z +a_0,$
where $n>0$ and $a_k\in \mathbb{Z}$ for every $k.$  It follows that  
$$ n v(z) = v(z^n) \geq \min_{0\leq j<n} v(a_jz^j) = v(a_kz^k) = kv(z) + v(a_k),$$ for some $k$ with $0 \leq k <n.$ Thus  $(n-k) v(z) \geq v(a_k) \geq 0,$ which implies $v(z) \geq 0.$
\end{proof}

\begin{thm} If $P(z)$ is a polynomial with coefficients in $\mathbb{Z}$ and $a\in \overline{\mathbb{Q}}$ is an algebraic integer, then the $2$-adic valuation of $P(a)$ is nonnegative. 
\end{thm}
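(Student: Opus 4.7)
The plan is to derive this as an immediate corollary of the previous theorem by feeding the polynomial expression of $P(a)$ into the defining properties (i)--(iv) of the valuation $v_2$ listed in Section~\ref{secA1}. No new machinery should be required, so the proof should fit in a few lines.

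More concretely, write $P(z) = c_n z^n + c_{n-1} z^{n-1} + \cdots + c_0$ with each $c_k \in \mathbb{Z}$, and consider $P(a) = \sum_{k=0}^{n} c_k a^k$ inside $\overline{\mathbb{Q}}$. Applying property (iii) inductively to this sum yields
\[
v_2(P(a)) \;\geq\; \min_{0 \leq k \leq n} v_2(c_k a^k) \;=\; \min_{0 \leq k \leq n}\bigl(v_2(c_k) + k\, v_2(a)\bigr),
\]
by property (i). Since each $c_k \in \mathbb{Z}$, the standard $2$-adic valuation gives $v_2(c_k) \geq 0$ (using property (ii) to handle $c_k = 0$), and since $a$ is an algebraic integer, the previous theorem yields $v_2(a) \geq 0$. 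Hence every term in the minimum is nonnegative, and therefore $v_2(P(a)) \geq 0$.

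There is no real obstacle here; the only subtle point worth flagging is that the extension of $v_2$ from $\mathbb{Q}$ to $\overline{\mathbb{Q}}$ guaranteed by Theorem~\ref{gsdge} must restrict to the ordinary $2$-adic valuation on $\mathbb{Z}$, which is precisely why the bound $v_2(c_k) \geq 0$ transfers to the extended setting. Once that is acknowledged, the ultrametric inequality from property (iii) does all the work.
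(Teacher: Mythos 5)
Your proof is correct and follows essentially the same argument as the paper, applying the ultrametric inequality together with the previous theorem's bound $v_2(a)\geq 0$ and the nonnegativity of $v_2$ on $\mathbb{Z}$. In fact your version is slightly more careful: the paper writes $v(w)=\min_{0\le j\le n}\bigl(v(a_j)+jv(z)\bigr)$ with an equality, which is not justified in general (property (iv) only guarantees equality when the summands have distinct valuations), whereas you correctly use the inequality from property (iii), which is all that is needed.
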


\begin{proof} Since $v(z) \geq 0$ and $w= a_{n} z^n + \cdots + a_1 z + a_0, $ it follows that  $$v(w) = \min_{0 \leq j \leq n} v(a_j) + jv(z) \geq 0.$$\end{proof}

\subsection{The main goal.}
We will assume throughout this section that $a$ is a Misiurewicz parameter for the semigroup family $\langle z^2 +c, -z^{2} +c \rangle.$

Therefore, the critical point zero has a unique bounded orbit, which is necessarily pre-periodic:
\[
\check{z}_0 = 0 \xrightarrow{\mathbf{f}_a} \check{z}_1 = a \xrightarrow{\mathbf{f}_c} \check{z}_2 \xrightarrow{\mathbf{f}_c} \cdots
\]
such that the critical point is eventually mapped to the  cycle $\alpha_a$ associated with $a$, which we denote by \[\alpha_a=(\check{z}_j)_{\check{\ell}}^{\check{\ell} + n}. \]  The correspondence with the previous notation in \eqref{alkjsokjdkkdf}  is given by $z_j = \check{z}_{j+1}$ and $\ell = \check{\ell} - 1$.
 The   orbit $(\check{z}_j)_0^\infty$ is completely determined by a sequence of signs $\sigma_j \in \{-1, 1\}$ satisfying
\begin{equation} \label{gdsgfcxewer}
\check{z}_{j+1} = \sigma_j \check{z}_j^2 + c, \quad \check{z}_0 = 0.
\end{equation}


By Remark~\ref{gjdjwew}, the smallest positive integer $\check{\ell}$ such that $\check{z}_{\check{\ell} + n} = \check{z}_{\check{\ell}}$ satisfies
\[
\check{\ell} = \ell + 1 \geq 2.
\]
We will refer to $\check{\ell}$ and $n$, with this precise meaning, throughout this section.

Using the sequence $\sigma_j$ determined by \eqref{gdsgfcxewer} we inductively define a sequence of polynomials $F_j(c)$ by setting $F_1(c)=c$ 
 and \begin{equation} \label{gjdse} F_{j+1}(c) = \sigma_j F_j(c)^2 +c. \end{equation}

 The sequence $F_j$ can used to give a simplified form of the function $w(c)$ in Definition \ref{asdljhafspdoifasdf}. We have $w(c) =F_{n+\check{\ell}}(c) - F_{\check{\ell}}(c), $ for every $c$ in a neighborhood of $a.$ In this way: {\it  the main goal  is to show that the derivative of $c \mapsto F_{n+\check{\ell}}(c) -F_{\check{\ell}}(c)$ at $c=a$ is nonzero. }

\subsection{Preliminary lemmas.} Let $\overline{\mathbb{Q}}$ denote the algebraic closure of $\mathbb{Q}.$ Let $v=v_2$ denote the $2$-adic valuation.
Let $\mathfrak{p}=\{z \in \overline{\mathbb{Q}} \mid v(z)>0\},$  $\mathfrak{n}=\{z \in \overline{\mathbb{Q}} \mid v(z) \geq 0\}$ and $2 \mathfrak{n} = \{ 2z \mid z\in \mathfrak{n}\}.$ Notice that $2\mathfrak{n}$ is a subset of $\mathfrak{p}.$

\begin{lem} \label{gkdkgkew} Assume $\check{\ell} \geq 3$. Suppose that  $\sigma_{\check{\ell} +n -1}= \sigma_{\check{\ell} -1}.$ Then 

\begin{equation} \label{ggkkk} F_{n+\check{\ell}-2}(a) \equiv F_{\check{\ell}-2}(a) \bmod \mathfrak{p},\end{equation}

\begin{equation}  \label{gjmcjd} F_{n+ \check{\ell} -2}'(a) \equiv 1 \bmod 2\mathfrak{n},
\end{equation}

\begin{equation}  \label{gjjd}   F_{\check{\ell}-2}'(a) \equiv 1 \bmod 2\mathfrak{n}.
\end{equation}

\end{lem}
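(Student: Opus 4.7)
The plan is to work in the $2$-adic valuation ring $\mathfrak{n}$ and reduce modulo $\mathfrak{p}$ to the residue field, which has characteristic $2$. Reduction to characteristic $2$ will collapse the signs $\sigma_j\in\{\pm 1\}$, because $v(2)>0$ gives $2\mathfrak{n}\subset\mathfrak{p}$ and hence $-1\equiv 1\pmod{\mathfrak{p}}$, and it will let Frobenius do the heavy lifting.

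\textbf{Preliminary observations.} First I will note, by an easy induction on the recursion $F_{j+1}(c)=\sigma_j F_j(c)^2+c$, that $F_j\in\mathbb{Z}[c]$ has degree $2^{j-1}$ with leading coefficient $\prod_{i<j}\sigma_i=\pm 1$. The identity $F_{\check\ell+n}(a)=F_{\check\ell}(a)$ then exhibits $a$ as a root of a polynomial in $\mathbb{Z}[c]$ with leading coefficient $\pm 1$, so $a$ is an algebraic integer and $a\in\mathfrak{n}$. Since $F_j$ and $F_j'$ both lie in $\mathbb{Z}[c]$, every value $F_j(a)$ and $F_j'(a)$ belongs to $\mathfrak{n}$.

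\textbf{Derivative congruences \eqref{gjmcjd} and \eqref{gjjd}.} Differentiating the recursion yields $F_{j+1}'(c)=2\sigma_j F_j(c)F_j'(c)+1$. Because $F_j(a),F_j'(a)\in\mathfrak{n}$, the first summand lies in $2\mathfrak{n}$, and a one-line induction starting from $F_1'(a)=1$ gives $F_j'(a)\equiv 1\pmod{2\mathfrak{n}}$ for every $j\geq 1$. As $\check\ell\geq 3$ and $n\geq 1$, both indices $\check\ell-2$ and $n+\check\ell-2$ are at least $1$, so \eqref{gjmcjd} and \eqref{gjjd} follow at once.

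\textbf{The main congruence \eqref{ggkkk}.} The cycle relation $F_{\check\ell+n}(a)=F_{\check\ell}(a)$ together with the recursion gives $\sigma_{\check\ell-1}F_{\check\ell-1}(a)^2=\sigma_{\check\ell+n-1}F_{\check\ell+n-1}(a)^2$, and the sign hypothesis reduces this to the \emph{exact} identity
\[
F_{\check\ell-1}(a)^2=F_{\check\ell+n-1}(a)^2.
\]
Expanding each side via $F_{j}(a)^2=F_{j-1}(a)^4+2\sigma_{j-1}\,a\,F_{j-1}(a)^2+a^2$, the $a^2$ terms cancel and the remaining cross-terms lie in $2\mathfrak{n}\subset\mathfrak{p}$, producing
\[
F_{\check\ell-2}(a)^4\equiv F_{\check\ell+n-2}(a)^4\pmod{\mathfrak{p}}.
\]
In the residue field $\mathfrak{n}/\mathfrak{p}$, which has characteristic $2$, the identity $y^4-z^4=(y-z)^4$ holds, and a field has no nonzero nilpotents; this will force $F_{\check\ell-2}(a)\equiv F_{\check\ell+n-2}(a)\pmod{\mathfrak{p}}$, giving \eqref{ggkkk}.

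\textbf{Where the difficulty lies.} The only delicate step is \eqref{ggkkk}: a naive attempt to track the sign in $F_{\check\ell-1}(a)=\pm F_{\check\ell+n-1}(a)$ and propagate it one step back stalls because no relation is available between $\sigma_{\check\ell-2}$ and $\sigma_{\check\ell+n-2}$. Reducing to the characteristic-$2$ residue field neutralizes both the unknown signs $\sigma_{\check\ell-2},\sigma_{\check\ell+n-2}$ and the $\pm$ ambiguity in one stroke, after which injectivity of Frobenius restores the congruence one level back in the orbit.
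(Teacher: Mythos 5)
Your proof is correct, and the route to \eqref{ggkkk} is genuinely different from the paper's. The paper first invokes minimality of $\check\ell$ to pin down the sign, concluding $F_{\check\ell+n-1}(a)+F_{\check\ell-1}(a)=0$, then substitutes the recursion to obtain $\sigma_{\check\ell+n-2}\zeta^2+\sigma_{\check\ell-2}\eta^2=-2a$ and runs a four-case analysis over the pair $(\sigma_{\check\ell+n-2},\sigma_{\check\ell-2})\in\{\pm1\}^2$, in each case extracting $v(\zeta-\eta)>0$. You instead keep only the squared relation $F_{\check\ell+n-1}(a)^2=F_{\check\ell-1}(a)^2$, expand one more level to get $\zeta^4\equiv\eta^4\pmod{\mathfrak{p}}$, and then let the Frobenius in the characteristic-$2$ residue field $\mathfrak{n}/\mathfrak{p}$ do all the work: $(\zeta-\eta)^4\equiv\zeta^4-\eta^4\equiv 0$ and the residue field is reduced, so $\zeta\equiv\eta$. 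This buys you two things: the case split disappears (the unknown signs are literally $\pm1\equiv 1$ in the residue field), and you never need to determine which factor of $F_{\check\ell+n-1}(a)^2-F_{\check\ell-1}(a)^2$ vanishes, i.e., you do not use the minimality of $\check\ell$ for this lemma at all (the paper still needs $G_0(a)=0$, but only in the \emph{next} lemma). For the derivative congruences \eqref{gjmcjd} and \eqref{gjjd}, your one-line induction from $F_1'(a)=1$ is the paper's observation phrased uniformly, and it absorbs the paper's separate treatment of the boundary case $\check\ell=3$. One small inaccuracy worth fixing: the leading coefficient of $F_j$ is not $\prod_{i<j}\sigma_i$; from $L_{j+1}=\sigma_j L_j^2$ with $L_j\in\{\pm1\}$ one gets $L_1=1$ and $L_j=\sigma_{j-1}$ for $j\geq 2$. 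This does not affect anything, since all you use is that $L_j=\pm1$, so $a$ is an algebraic integer.
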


\begin{proof} Since $F_{\check{\ell} +n} (a) = F_{\check{\ell}}(a)$ and  $\sigma_{\check{\ell} +n -1}= \sigma_{\check{\ell} -1}, $
 we have

$$\sigma_{\check{\ell} +n -1}F_{\check{\ell} + n -1}(a)^{2} +a = \sigma_{\check{\ell} -1}F_{\check{\ell} -1}(a)^2 +a.
$$ Hence 
$F_{\check{\ell} + n -1}(a)^{2} = F_{\check{\ell} -1}(a)^2$ and 
\[ (F_{\check{\ell} +n -1}(a) - F_{\check{\ell} -1}(a))(F_{\check{\ell} +n -1}(a) + F_{\check{\ell} -1}(a)) =0.\]

\noindent Since $\check{\ell}$ is minimal, the first term in the product is nonzero. Thus 
\begin{equation} G_{0}(c):= F_{\check{\ell} + n -1}(c) + F_{\check{\ell} -1}(c)
\end{equation} vanishes at $c=a.$  Using the same idea, we can express $F_{\check{\ell}+n-1}(a)$ and $F_{\check{\ell}-2}(a)$  in terms of the square of another function, and therefore

\begin{equation} \label{(3)}\sigma_{\check{\ell}+n-2} F_{\check{\ell}+n-2}(a)^2+\sigma_{\check{\ell}-2} F_{\check{\ell}-2}(a)^2=-2 a.
\end{equation}

Let $\zeta=F_{\check{\ell}+n-2}(a),$ $\eta=F_{\check{\ell} -2}(a),$ $ \sigma=\sigma_{\check{\ell}+n-2}$ and $ \tau=\sigma_{\check{\ell}-2}.$ Since $a$ is an algebraic integer and $F_j(c)$ is a polynomial for every $j,$ it follows that the $2$-adic valuations of $\zeta$ and $\eta$ are nonnegative. Hence $\zeta, \eta \in \mathfrak{n}.$ 
Each of the signs $\sigma$ and $\tau$ is either $-1$ or $1.$ We have four possibilities. 
\begin{enumerate} \item[$(a)$] If $(\sigma, \tau)=(1,1)$ then $\zeta^2+\eta^2=-2 a$. We have

\end{enumerate}
$$
\begin{aligned}
& (\zeta-\eta)^2=\zeta^2+\eta^2-2 \eta \zeta=-2 a-2 \eta \zeta \\
& 2 v(\zeta-\eta) \geq 1 \Rightarrow v(\zeta-\eta)>0 .
\end{aligned}
$$
(In the preceding calculation, we have used the properties described previously: $v(z^n)=nv(z),$ and $v(z +w)$  is at least $\min \{v(z), v(w) \}.$)

\begin{enumerate} \item[$(b)$] If $(\sigma, \tau)=(1,-1)$ then from \eqref{(3)} we have $\zeta^2-\eta^2=-2 a.$  Then 
\end{enumerate}
 $$(\zeta-\eta)(\zeta+\eta)=-2 a.$$ After applying the $2$-adic valuation on both sides we get  $v(\zeta-\eta)+v(\zeta+\eta)>0$.
Since $v(-\eta)=v(\eta)$,
$v(\zeta-\eta)$ equals $\min \{v(\zeta), v(-\eta)\}$, which is the same as $\min \{v(\zeta), v(\eta)\}=v(\zeta+\eta).$

Hence $v(\zeta - \eta) >0$ in the second case as well.

\begin{enumerate} \item[$(c)$] If $(\sigma, \tau)=(-1,-1),$ by \eqref{(3)} we have $-\zeta^2-\eta^2=-2 a$. From $(\zeta-\eta)^2=2 a-2 \zeta \eta$ we conclude that $v(\zeta-\eta)>0$.
\end{enumerate}

\begin{enumerate}\item[$(d)$] If $\left(\sigma, \tau\right)=(-1,1),$ then $-\zeta^2+\eta^2=-2 a$ and
$ (\eta-\zeta)(\eta+\zeta)=-2 a.$ Hence $2 v(\eta-\zeta)>0,$ which implies $v(\eta-\zeta)>0 .
$

\end{enumerate}

\noindent We  conclude that $\zeta \equiv \eta \bmod \mathfrak{p}$ in all four possibilities. The second equation \eqref{gjmcjd} can be verified by expressing $F_{n + \check{\ell} -2}(c)$ in terms of the square of  $F_{n+ \check{\ell} -3}(c).$ Hence

$$F_{n + \check{\ell} -2}'(a) = 2 \sigma_{n+\check{\ell} -3}F_{n+\check{\check{\ell}} -3}(c) F_{n+\check{\ell} -3}'(a) +1. $$

\noindent The third equation \eqref{gjjd} can be checked in the same way if $\check{\ell}\geq 4$. If $\check{\ell}=3$ the proof is simpler, for then $F_{\check{\ell} -2}'(a)=1$. 
\end{proof}

\begin{lem} \label{hgeqdbc}   Define 
$G_0(c) = F_{\check{\ell} +n -1}(c) + F_{\check{\ell} -1}(c),$
 \noindent for every $c$ in a neighbourhood of $a,$ where $\check{\ell} \geq 2.$  Suppose that  $\sigma_{\check{\ell} +n -1}= \sigma_{\check{\ell} -1}.$ Then
 
  \begin{equation} \frac{G_0'(a)}{2} \equiv 1 \bmod \mathfrak{p}.\end{equation}

\end{lem}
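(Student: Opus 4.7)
My plan is to differentiate $G_0$ using the recursion \eqref{gjdse} and then reduce modulo $\mathfrak{p}$ using Lemma~\ref{gkdkgkew}. From $F_{j+1}(c) = \sigma_j F_j(c)^2 + c$ I would compute $F_{j+1}'(c) = 2\sigma_j F_j(c) F_j'(c) + 1$; writing this out for $j = \check{\ell}+n-2$ and $j = \check{\ell}-2$, summing, and dividing by $2$, I would obtain, when $\check{\ell} \geq 3$,
\[
\frac{G_0'(a)}{2} = 1 + \sigma_{\check{\ell}+n-2} F_{\check{\ell}+n-2}(a) F_{\check{\ell}+n-2}'(a) + \sigma_{\check{\ell}-2} F_{\check{\ell}-2}(a) F_{\check{\ell}-2}'(a).
\]
This reduces the task to showing that the sum of the last two terms lies in $\mathfrak{p}$.

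First I would kill the derivative factors: by \eqref{gjmcjd} and \eqref{gjjd}, each of $F_{\check{\ell}+n-2}'(a)$ and $F_{\check{\ell}-2}'(a)$ is $\equiv 1 \bmod 2\mathfrak{n}$; and since $a$ is an algebraic integer and the $F_j$ have integer coefficients, the values $F_{\check{\ell}+n-2}(a)$ and $F_{\check{\ell}-2}(a)$ themselves lie in $\mathfrak{n}$, so the multiplicative error is absorbed into $2\mathfrak{n} \subset \mathfrak{p}$. Consequently
\[
\frac{G_0'(a)}{2} - 1 \;\equiv\; \sigma_{\check{\ell}+n-2} F_{\check{\ell}+n-2}(a) + \sigma_{\check{\ell}-2} F_{\check{\ell}-2}(a) \bmod \mathfrak{p}.
\]
The next step exploits \eqref{ggkkk}: setting $\eta = F_{\check{\ell}-2}(a) \in \mathfrak{n}$ and using $F_{\check{\ell}+n-2}(a) \equiv \eta \bmod \mathfrak{p}$, the right-hand side is congruent to $(\sigma_{\check{\ell}+n-2} + \sigma_{\check{\ell}-2})\eta$ modulo $\mathfrak{p}$. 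Since each sign is $\pm 1$ the coefficient lies in $\{-2,0,+2\}\subset \mathfrak{p}$ (because $v_2(2)=1>0$), and $\mathfrak{p}\cdot \mathfrak{n} \subset \mathfrak{p}$, so $G_0'(a)/2 \equiv 1 \bmod \mathfrak{p}$.

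The case $\check{\ell} = 2$ has to be handled separately, since Lemma~\ref{gkdkgkew} requires $\check{\ell} \geq 3$. Here $F_1(c) = c$ and $F_1'(c) = 1$, so the analog of the displayed identity degenerates to $G_0'(a)/2 = 1 + \sigma_n F_n(a) F_n'(a)$. From $G_0(a) = 0$ (already established in the proof of Lemma~\ref{gkdkgkew}) together with $F_1(a) = a$, the identity $\sigma_n F_n(a)^2 + a = -a$ gives $F_n(a)^2 = -2\sigma_n a \in 2\mathfrak{n}$, whence $F_n(a) \in \mathfrak{p}$; combined with $F_n'(a) \in \mathfrak{n}$, this forces $\sigma_n F_n(a) F_n'(a) \in \mathfrak{p}$ and the conclusion follows. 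The main obstacle will be the combinatorial bookkeeping of the signs $\sigma_{\check{\ell}-2}$ and $\sigma_{\check{\ell}+n-2}$ (whose relation is \emph{not} part of the hypothesis) together with the degeneracy at the boundary $\check{\ell} = 2$; once the three congruences of Lemma~\ref{gkdkgkew} are in hand, the reduction modulo $\mathfrak{p}$ is essentially formal.
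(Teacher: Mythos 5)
Your proposal is correct and follows essentially the same path as the paper: both proofs differentiate via the recursion, split into the cases $\check{\ell}=2$ and $\check{\ell}\geq 3$, and in the latter case feed the three congruences of Lemma~\ref{gkdkgkew} into the expression for $G_0'(a)/2$, exploiting that $\sigma_{\check{\ell}+n-2}+\sigma_{\check{\ell}-2}\in\{-2,0,2\}\subset\mathfrak{p}$. Your reduction is a bit cleaner than the paper's (you absorb the derivative factors modulo $\mathfrak{p}$ first and then apply \eqref{ggkkk}, whereas the paper substitutes $\zeta$, $\zeta+m_1$, $1+2b_0$, $1+2b_1$ and expands term by term), but the underlying idea is identical.
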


\begin{proof}  We will separate the proof into two cases according to whether $\check{\ell} =2$ or $\check{\ell} \geq 3.$ In the first case,  $\check{\ell}=2.$ Then 

$$G_0(c) = F_{n+1}(c) + c = \sigma_n F_{n}(c)^2 + 2c.$$  

\noindent Since $G_0(a)=0$, we have $\sigma_nF_n(a)^2 + 2a =0$, and therefore $v(F_n(a)) >0.$ It follows that the valuation of  $\sigma_n F_n(a) F_n'(a)$ is positive (the valuation of the product is the sum of the valuations.) We conclude that 

$$\frac{G_0'(a)}{2} = \sigma_n F_{n}(c) F_{n}'(c) +1 \equiv 1 \bmod \mathfrak{p}. $$

\noindent In the second case,  $\check{\ell} \geq 3.$ In this case, it is possible to write $F_{\check{\ell} -1}(c)$ in terms of the square of $F_{\check{\ell} -2}(c)$. Therefore
$ G_0(c) = \sigma_{\check{\ell} +n -2}F_{\check{\ell} + n -2}(c) ^2 + \sigma_{\check{\ell} -2}(c)^2 + 2c$ and 
\begin{equation} \label{(5)}
\frac{G_0'(a)}{2} = \sigma_{\check{\ell} + n-2}F_{\check{\ell} + n -2}(a) F_{\check{\ell} + n -2}'(a) + \sigma_{\check{\ell} -2} F_{\check{\ell} -2}(a)F_{\check{\ell} -2}'(a)  +1. 
\end{equation}

\noindent Let $\zeta = F_{\check{\ell} + k -2}(a).$ By Lemma \ref{gkdkgkew}, the value of $F_{\check{\ell} -2}(a)$ is $\zeta + m_1$ where $v(m_1) >0.$ Moreover, 
$ F_{\check{\ell} -2}'(a) = 1 + 2b_0$ and  $ F_{\check{\ell} + n -2}'(a) = 1 + 2b_1$
where $v(b_0) \geq 0$ and $v(b_1) \geq 0.$  By \eqref{(5)}, ${G_0'(a)}/2$ equals

$$ \sigma_{\check{\ell} + n -2}\zeta (1+ 2b_1) + \sigma_{\check{\ell} -2} (1+ 2b_0)(\zeta + m_1) +1, $$

\noindent  which can be simplified into
 \[\sigma_{\check{\ell} + n -2} \zeta + 2\sigma_{\check{\ell} + n -2}\zeta b_1 + \sigma_{\check{\ell} -2}\zeta + \sigma_{\check{\ell}-2}m_1 + 2\sigma_{\check{\ell} -2} b_0 \zeta + 2\sigma_{\check{\ell} -2}b_0m_1 +1. \] Consequently,  ${G_0'(a)}/{2}$ equals
\[ 2(\sigma_{\check{\ell} + n-2}\zeta b_1 + \sigma_{\check{\ell} -2} b_0 \zeta + \sigma_{\check{\ell} -2}b_0 m_1) + \zeta (\sigma_{\check{\ell} + k -2} + \sigma_{\check{\ell} -2}) + \sigma_{\check{\ell} -2}m_1 +1.\]
The last sum consists of four terms. The first clearly has positive valuation, the second may be $0$, $2\zeta$ or $-2\zeta$, and therefore has positive valuation. The third has positive valuation, since $v(m_1) >0.$ The fourth term is $1.$ Since the valuation of the sum is at least the minimum of the valuations, it follows that 
$ {G_0'(a)}/{2} \equiv 1 \bmod \mathfrak{p}.$\end{proof}

\begin{lem} \label{ghobcsgasd} Suppose that $\sigma_{\check{\ell} +n -1}= - \sigma_{\check{\ell} -1}$ and $\check{\ell} \geq 2.$   Define \[G_1(c) = F_{\check{\ell} +n -1}(c) + iF_{\check{\ell} -1}(c),\]   \[G_2(c) = F_{\check{\ell} +n -1}(c) -i F_{\check{\ell} -1}(c),\] for every $c$ in a neighbourhood of $a.$   If $G_1(a)=0,$ then 

\begin{equation}G_1'(a) \equiv (1+i) \bmod \mathfrak{p}.
\end{equation} 

\noindent If $G_2(a) =0,$ then 
\begin{equation} \label{gdiigd} G_2'(a) \equiv (1-i) \bmod \mathfrak{p}. 
\end{equation}

\end{lem}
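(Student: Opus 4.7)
The plan is to mirror the two-case structure used in the proof of Lemma \ref{hgeqdbc}, splitting according to whether $\check{\ell} = 2$ or $\check{\ell} \geq 3$, and handling $G_1$ and $G_2$ in parallel via the evident symmetry $i \mapsto -i$.

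First I would record the algebraic identity that motivates the hypothesis. From $F_{\check{\ell}+n}(a) = F_{\check{\ell}}(a)$ combined with $\sigma_{\check{\ell}+n-1} = -\sigma_{\check{\ell}-1}$ one obtains $F_{\check{\ell}+n-1}(a)^2 + F_{\check{\ell}-1}(a)^2 = 0$, which factors as $G_1(a)\,G_2(a) = 0$. Thus at least one of the two conditional hypotheses of the lemma must hold, and by symmetry it suffices to verify the statement for $G_1$ in detail.

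Next I would compute $G_1'(a)$ in each sub-case. For $\check{\ell} = 2$ the identities $F_{\check{\ell}-1}(c) = c$ and $F_{\check{\ell}+n-1}(c) = \sigma_n F_n(c)^2 + c$ give, after the chain rule,
\[ G_1'(a) = (1+i) + 2\sigma_n F_n(a) F_n'(a). \]
For $\check{\ell} \geq 3$, writing both $F_{\check{\ell}+n-1}$ and $F_{\check{\ell}-1}$ as squares of their predecessors plus $c$ yields
\[ G_1'(a) = (1+i) + 2\sigma_{\check{\ell}+n-2} F_{\check{\ell}+n-2}(a) F_{\check{\ell}+n-2}'(a) + 2i\,\sigma_{\check{\ell}-2} F_{\check{\ell}-2}(a) F_{\check{\ell}-2}'(a). \]
In either case the deviation from $1+i$ is a sum of terms each carrying an explicit factor of $2$. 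Since the $F_j$ and their derivatives $F_j'$ are polynomials in $c$ with integer coefficients, their values at the algebraic integer $a$ lie in $\mathfrak{n}$ by the results of Section \ref{secA1}; therefore each error term lies in $2\mathfrak{n} \subset \mathfrak{p}$, and consequently $G_1'(a) \equiv 1+i \bmod \mathfrak{p}$. The statement for $G_2$ follows on replacing $i$ by $-i$ throughout.

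I do not expect a real obstacle here: the argument is actually simpler than the one for Lemma \ref{hgeqdbc}. There, the target $1$ sits on the far side of an initial division by $2$, and this forces the delicate approximate-cancellation argument supplied by Lemma \ref{gkdkgkew}. In the present setting the factor of $2$ produced naturally by the chain rule is already enough to push every error term into $\mathfrak{p}$, so no cancellation analysis is needed. The only mild point requiring care is the opening factorization $G_1(a)\,G_2(a) = 0$ and the invocation of the standard fact that integer-coefficient polynomials send algebraic integers into $\mathfrak{n}$; neither presents any difficulty.
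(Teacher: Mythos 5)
Your proof is correct and follows essentially the same route as the paper's: the same two-case split on $\check{\ell}=2$ versus $\check{\ell}\geq 3$, the same expansion of $F_{\check{\ell}+n-1}$ and $F_{\check{\ell}-1}$ as squares of predecessors plus $c$, and the same observation that the deviation of $G_1'(a)$ from $1+i$ carries a factor of $2$. You spell out the justification (integer-coefficient polynomials and their derivatives send the algebraic integer $a$ into $\mathfrak{n}$, so the error terms land in $2\mathfrak{n}\subset\mathfrak{p}$) a bit more explicitly than the paper does, and your explanation of why this lemma needs no analogue of Lemma~\ref{gkdkgkew} is accurate, but there is no substantive difference in method.
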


\begin{proof}  Suppose that $G_1(a)=0.$ If $\check{\ell}=2,$ then $G_1(c) = \sigma_kF_{k}(c)^2 + c + ic, $ and     $G_1'(a) = 2\sigma_k F_k(a) F_k'(a) + (1+i).$ If $\check{\ell} \geq 3,$  then $G_1'(a)$ is given by 
$$2(\sigma_{\check{\ell} +n -2} F_{\check{\ell} + n -2}(a) F_{\check{\ell} + n -2}'(a) + i \sigma_{\check{\ell} -2} F_{\check{\ell} -2}(a) F_{\check{\ell} -2}'(a)) + 1 +i. $$\noindent In any case, we have $G_1'(a) \equiv (1+i) \bmod \mathfrak{p}$ whenever $G_1(a) =0.$

The same argument can be used to prove \eqref{gdiigd}.
\end{proof}

\begin{proof}[\bf Proof of Theorem \ref{dfasdfefd}]   Since $a$ is a Misiurewicz point,
$ F_{\check{\ell} +a}(a)=F_{\check{\ell}}(a).$  
Since $F_{j+1}(a)=\sigma_jF_{j}(a)^2 +a,$ where $\sigma_j$ belongs to $ \{-1, 1\},$ it follows that
\[\sigma_{\check{\ell} +n -1}F_{\check{\ell} + n -1}(a)^{2} +a = \sigma_{\check{\ell} -1}F_{\check{\ell} -1}(a)^2 +a, \] 
\begin{equation}\label{ghdkkkse}
\sigma_{\check{\ell} +n -1}F_{\check{\ell} + n -1}(a)^{2} = \sigma_{\check{\ell} -1}F_{\check{\ell} -1}(a)^2. 
\end{equation}

We have two cases depending on the signs of $\sigma_{\check{\ell}+ n -1}$  and $\sigma_{\check{\ell}-1}.$ If they coincide, then  $F_{\check{\ell} +n -1}(a)^2 = F_{\check{\ell} -1}(a)^2$ and  
 $$  (F_{\check{\ell} +n -1}(a) - F_{\check{\ell} -1}(a))(F_{\check{\ell} +n -1}(a) + F_{\check{\ell} -1}(a)) =0.$$ Since $\check{\ell}$ is minimal, the second factor must vanish at $a.$  By Lemma  \ref{hgeqdbc}, the derivative of $G_0(c) = F_{\check{\ell} +n -1}(c) + F_{\check{\ell} -1}(c)$ satisfies $G_0'(a)/2 \equiv 1 \bmod \mathfrak{p}.$ Recall that the $2$-adic valuation of every element of $\mathfrak{p}$ is strictly positive. Therefore, if $\sigma_{\check{\ell} + n -1}$ coincides with $\sigma_{\check{\ell} -1},$ then $G_0'(a) \neq 0$ and 
 \begin{equation}
 \begin{split}
 F_{\check{\ell} +n}(c) - F_{\check{\ell}}(c) &= \sigma_{\check{\ell} + n -1} F_{\check{\ell} +n -1}(c)^2 + c - \sigma_{\check{\ell} -1}F_{\check{\ell} -1}(c)^2 -c \\
 & = \sigma_{\check{\ell} -1}(F_{\check{\ell} + n -1}(c)^2 - F_{\check{\ell} -1}(c)^2) \\
 & = \sigma_{\check{\ell} -1}(F_{\check{\ell} + n -1}(c) + F_{\check{\ell} -1}(c)) (F_{\check{\ell} + n -1}(c) - F_{\check{\ell} -1}(c))\\
 & = \sigma_{\check{\ell} -1}G_{0}(c) (F_{\check{\ell} + n -1}(c) - F_{\check{\ell} -1}(c)). 
 \end{split}
 \end{equation} It follows that 
 $ F_{\check{\ell} + n}'(a) - F_{\check{\ell}}'(a) = -2\sigma_{\check{\ell} -1} G_{0}'(a) F_{\check{\ell} -1}(a)$ which is  $\neq 0.$ Here  $F_{\check{\ell} -1}(a) \neq 0$ because the bounded orbit of $0$ is strictly pre-periodic. 
 
There is nothing else to prove in the case $\sigma_{\check{\ell} + n -1} = \sigma_{\check{\ell} -1}.$ 
If $\sigma_{\check{\ell} + n -1} = -\sigma_{\check{\ell} -1},$ then by \eqref{ghdkkkse} we have
 $(F_{\check{\ell} +n -1}(a)^2 + F_{\check{\ell}-1}(a)^2)=0.$
 
\noindent Either  $G_1(a)=F_{\check{\ell} +n -1}(a) + i F_{\check{\ell}-1}(a)$ or  $G_{2}(a) = F_{\check{\ell} +n-1}(a) - i F_{\check{\ell} -1}(a)$ is zero. By Lemma \ref{ghobcsgasd}, if $G_1(a)=0$ then

$$F_{\check{\ell} +n -1}'(a) - F_{\check{\ell}}'(a) = -2\sigma_{\check{\ell} + n -1} i G_1'(a) F_{\check{\ell} -1}(a) \neq 0. $$

  \noindent If $G_2(a)=0$, a similar argument shows that $F_{\check{\ell} +n}'(a) - F_{\check{\ell}}'(a) \neq 0.$
\end{proof}

\subsection*{Acknowledgments}The author would especially like to thank Luna Lomonaco for inspiring discussions on this subject in late 2024.
This research was partially supported by CNPq/MCTI/FNDCT, project 406750/2021-1 (Brazil).
The author also thanks Daniel Smania for his kind hospitality at the Institute of Mathematics and Computer Sciences, University of S\~ao Paulo.

\bibliography{oi}




\end{document}